\documentclass{amsart}
\usepackage[utf8]{inputenc}
\usepackage{amsthm, amsfonts, amssymb}
\usepackage{mathrsfs}
\usepackage{graphicx}
  \newcommand{\respecte}{\ensuremath{\prec}}
  \newcommand{\cum}{\ensuremath{\kappa}}
  \newcommand\id{\ensuremath{\mathrm{id}}}
  \newcommand{\C}{\ensuremath{\mathbb{C}}}%
  \newcommand{\R}{\ensuremath{\mathbb{R}}}%
  \newcommand{\N}{\ensuremath{\mathbb{N}}}%
  \newcommand{\Z}{\ensuremath{\mathbb{Z}}}%

  \newcommand{\egdef}{\ensuremath{\stackrel{\text{\tiny def}}=}}%

\newtheorem{thm}{Theorem}[section]
\newtheorem{lemma}[thm]{Lemma}
\newtheorem{corollaire}[thm]{Corollary}

\theoremstyle{remark}
\newtheorem*{rem}{Remark}

\theoremstyle{definition}
\newtheorem{dfn}{Definition}[section]

\begin{document}
\author{Mikael de la Salle}
\title{Strong Haagerup inequalities with operator coefficients}

\address{D{\'e}partement de Math{\'e}matiques et Applications
  \\ {\'E}cole Normale Sup{\'e}rieure \\ 45 rue d'Ulm \\ 75005 Paris}

\address{Institut de Math{\'e}matiques de Jussieu \\ rue du Chevalleret \\
  75013 Paris}
\thanks{Partially supported by  ANR-06-BLAN-0015}
\email{mikael.de.la.salle@ens.fr}
\keywords{}

\begin{abstract}
  We prove a Strong Haagerup inequality with operator coefficients.  
  If for an integer $d$, $\mathscr H_d$ denotes the subspace of the von
  Neumann algebra of a free group $F_I$ spanned by the words of length $d$
  in the generators (but not their inverses), then we provide in this paper
  an explicit upper bound on the norm on $M_n(\mathscr H_d)$, which
  improves and generalizes previous results by Kemp-Speicher (in the scalar
  case) and Buchholz and Parcet-Pisier (in the non-holomorphic setting).
  Namely the norm of an element of the form $\sum_{i=(i_1,\dots ,i_d)} a_i
  \otimes \lambda(g_{i_1} \dots g_{i_d})$ is less than $4^5 \sqrt e
  (\|M_0\|^2+\dots+\|M_d\|^2 )^{1/2}$, where $M_0,\dots, M_d$ are
  $d+1$ different block-matrices naturally constructed from the family
  $(a_i)_{i \in I^d}$ for each decomposition of $I^d \simeq I^l \times
  I^{d-l}$ with $l=0,\dots, d$. 

  It is also proved that the same inequality holds for the norms in the
  associated non-commutative $L_p$ spaces when $p$ is an even integer, $p
  \geq d$ and when the generators of the free group are more generally
  replaced by $*$-free $\mathscr R$-diagonal operators. In particular it
  applies to the case of free circular operators. We also get inequalities
  for the non-holomorphic case, with a rate of growth of order $d+1$ as for
  the classical Haagerup inequality. The proof is of combinatorial nature
  and is based on the definition and study of a symmetrization process for
  partitions.
\end{abstract}
\maketitle

\section*{Introduction}

Let $F_r$ be the free group on $r$ generators and $|\cdot|$ the length
function associated to this set of generators and their inverses. The
left regular representation of $F_r$ on $\ell^2(F_r)$ is denoted by
$\lambda$, and the $C^*$-algebra generated by $\lambda(F_r)$ is
denoted by $C^*_\lambda(F_r)$. In \cite{MR520930} (Lemma 1.4),
Haagerup proved the following result, now known as the Haagerup
inequality: for any function $f:F_r \to \C$ supported by the words of
length $d$,
\begin{equation}
 \label{eq=Haagerup_inequality}
\left\|\sum_{g \in F_r} f(g) \lambda(g) \right\|_{C^*_\lambda(F_r)} \leq
(d+1) \|f\|_2.
\end{equation}

This inequality has many applications and generalizations. It indeed
gives a useful criterion for constructing bounded operators in
$C^*_\lambda(F_r)$ since it implies in particular that for $f:F_r \to
\C$
\[ \left\|\sum_{g \in F_r} f(g) \lambda(g) \right\|_{C^*_\lambda(F_r)} \leq 2 \sqrt{
\sum_{g \in F_r} (|g|+1)^4 |f(g)|^2},\]
and the so-called Sobolev norm $\sqrt{
\sum_{g \in F_r} (|g|+1)^4 |f(g)|^2}$ is much easier to compute that the operator norm of $\lambda(f)=\sum f(g)\lambda(g)$. The groups for which the same kind of inequality holds for some length function (replacing the term $(d+1)$ in \eqref{eq=Haagerup_inequality} by some power of $(d+1)$) are called groups with property RD \cite{MR943303} and have been extensively studied; they play for example a role in the proof of the Baum-Connes conjecture for discrete cocompact lattices of $SL_3(\R)$ \cite{MR1652538}.

Another direction in which the Haagerup inequality was studied and extended is the theory of operator spaces.
It concerns the same inequality when the function $f$ is allowed to take
operator values. This question was first studied by Haagerup and Pisier in
\cite{MR1240608}, and the most complete inequality was then proved by
Buchholz in \cite{MR1476122}. One of its interests is that it gives an
explanation of the $(d+1)$ term in the classical inequality. Indeed, in the
operator valued case, the term $(d+1) \|f\|_2$ is replaced by a sum of
$d+1$ different norms of $f$ (which are all dominated by $\|f\|_2$ when $f$
is scalar valued). More precisely if $S$ denotes the canonical set of generators of $F_r$ and their inverses,  a function $f:F_r \to M_n(\C)$ supported by the words of length $d$ can be
viewed as a family $(a_{(h_1,\dots,h_d)})_{(h_1,\dots, h_d)\in S^d}$ of
matrices indexed by $S^d$ in the following way: $a_{(h_1,\dots, h_d)}
=f(h_1 h_2 \dots h_d)$ if $|h_1\dots h_d|=d$ and $a_{(h_1,\dots, h_d)}=0$
otherwise.

The family of matrices $a=(a_h)_{h \in S^d}$ can be seen in various natural
ways as a bigger matrix, for any decomposition of $S^d \simeq S^l \times
S^{d-l}$.  If the $a_h$'s are viewed as operators on a Hilbert space $H$
($H=\C^n$), then let us denote by $M_l$ the operator from $H \otimes
\ell^2(S)^{\otimes {d-l}}$ to $H \otimes \ell^2(S)^{\otimes l}$ having the
following block-matrix decomposition:
\[M_l = \left(a_{(s,t)}\right)_{s\in S^l,t\in S^{d-l}}.\]

Then the generalization from \cite{MR1476122} is
\begin{thm}[\cite{MR1476122},Theorem 2.8]
  Let $f:F_r \to M_n(\C)$ supported by the words of length $d$ and
  define $(a_h)_{h \in S^d}$ and $M_l$ for $0 \leq l \leq d$ as above.
  Then
\[ \left\|\sum_{g \in W_d} f(g) \otimes \lambda(g) \right\|_{M_n \otimes
  C^*_\lambda(F_r)} \leq \sum_{l=0}^d \|M_l\|.\]
\end{thm}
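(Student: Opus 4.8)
The plan is to decompose the operator $T = \sum_{g\in W_d} f(g)\otimes\lambda(g)$, acting on $H\otimes\ell^2(F_r)$ with $H=\mathbb{C}^n$, according to the number of cancellations produced when a reduced word of length $d$ is multiplied on the left of a reduced word. Writing $E_k$ for the orthogonal projection of $\ell^2(F_r)$ onto the closed span of $\{\delta_h : |h|=k\}$, and noting that $\lambda(g)\delta_h$ has length $d+k-2j$ when exactly $j$ letters of $g$ cancel against $h$, I would set $T_j = \sum_k (\mathrm{id}\otimes E_{d+k-2j})\, T\, (\mathrm{id}\otimes E_k)$ for $j=0,\dots,d$, so that $T=\sum_{j=0}^d T_j$. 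The triangle inequality then reduces the statement to the estimate $\|T_j\| \leq \|M_{d-j}\|$ for each $j$, since $\sum_{j=0}^d\|M_{d-j}\| = \sum_{l=0}^d\|M_l\|$.

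The heart of the argument is to realize each $T_j$ as a factorization through the block matrix $M_l$ with $l=d-j$. Recall $M_l$ sends $H\otimes\ell^2(S)^{\otimes j}$ to $H\otimes\ell^2(S)^{\otimes l}$ via $\xi\otimes\delta_t\mapsto\sum_{s\in S^l}(a_{(s,t)}\xi)\otimes\delta_s$. I introduce two maps built from the tree structure of $F_r$: a splitting map $A\colon H\otimes\ell^2(F_r)\to H\otimes\ell^2(S)^{\otimes j}\otimes\ell^2(F_r)$ which, on a reduced word $h=s_1\cdots s_k$ of length $k\geq j$, records the first $j$ letters inverted and in reverse order in the middle factor and leaves the tail $s_{j+1}\cdots s_k$ in the last factor, and which kills words of length $<j$; and a concatenation map $B\colon H\otimes\ell^2(S)^{\otimes l}\otimes\ell^2(F_r)\to H\otimes\ell^2(F_r)$ sending $\xi\otimes\delta_u\otimes\delta_{h'}$ to $\xi\otimes\delta_{uh'}$ when the concatenation $uh'$ is reduced and to $0$ otherwise. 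Because $A$ is the composition of an orthogonal splitting of reduced words with a relabeling of basis vectors, it is a partial isometry, and $B$ maps each basis vector to a basis vector or to $0$ in an injective fashion, so both have norm at most $1$.

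A direct computation of $B\,(M_l\otimes\mathrm{id})\,A$ on a vector $\xi\otimes\delta_h$ then has to reproduce $T_j(\xi\otimes\delta_h)$: unwinding the definitions, the condition that $uh'$ be reduced matches the requirement that no cancellation beyond the prescribed $j$ letters occurs, and the inversion-and-reversal built into $A$ is exactly what aligns the index $t$ of $M_l$ with the cancelled suffix $(h_{d-j+1},\dots,h_d)$ of the word $g$. This gives $T_j = B\,(M_l\otimes\mathrm{id})\,A$ with $l=d-j$, and hence $\|T_j\|\leq\|A\|\,\|M_l\|\,\|B\|\leq\|M_{d-j}\|$, completing the proof.

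The step I expect to be most delicate is the bookkeeping in this last identity: one must check that the reversal and inversion of the cancelled letters is consistent, that the non-cancellation constraint $h_{d-j}\neq s_{j+1}^{-1}$ is precisely encoded by the reducedness condition defining $B$, and that the boundary cases $k=j$ (total cancellation of $h$) and $k<j$ are handled correctly. Verifying that $A$ and $B$ are genuinely contractions, that is, that the splitting and concatenation of reduced words are injective on the relevant basis vectors, is routine but must be done with care, since it is exactly this orthogonality coming from the free (tree) structure that produces the single norm $\|M_{d-j}\|$ rather than a larger sum.
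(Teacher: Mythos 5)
Your proof is correct, but note that the paper itself contains no proof of this statement: it is quoted as background from Buchholz \cite{MR1476122}, and the argument you give is essentially the classical one of Buchholz and Haagerup--Pisier. Your details do check out: the decomposition $T=\sum_{j=0}^d T_j$ according to the number $j$ of cancellations, the factorization $T_j = B\,(M_{d-j}\otimes\mathrm{id})\,A$ with $A,B$ partial isometries coming from the tree structure of $F_r$, and hence $\|T_j\|\leq \|M_{d-j}\|$; in particular the convention that $a_{(s,t)}=0$ when the tuple $(s,t)$ is not a reduced word absorbs the reducedness constraint at the junction inside $g$, while the reducedness condition defining $B$ encodes exactly the ``no $(j+1)$-st cancellation'' requirement, so the bookkeeping you flagged as delicate does close, including the boundary cases $j=k$ and $j=d$. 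What is worth comparing is your route against the machinery this paper actually develops: the paper's methods are combinatorial (free cumulants, the sets $NC^*(d,m)$ and $NC(d,m)$ of non-crossing partitions, and the symmetrization operations $P_k$), designed to exploit holomorphy (words in $W_d^+$, no inverses) and to cover general $\mathscr R$-diagonal operators and $L_p$-norms; when the author runs that machinery in the non-holomorphic setting (Theorems \ref{thm=non_holomo_Rdiag} and \ref{thm=non_holomo_selfadj}) it only yields the weaker bound with $(d+1)\max_{0\leq l\leq d}\|M_l\|_p$, and the paper explicitly remarks that obtaining $\sum_{l=0}^d\|M_l\|$ in that generality remains open. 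So your elementary factorization buys the sharp bound $\sum_{l=0}^d\|M_l\|$ for the free group with inverses allowed, while the paper's cumulant approach buys the $\sqrt{d}$-type improvement in the holomorphic case, the $L_p$ statements, and the $\mathscr R$-diagonal generality; neither argument subsumes the other.
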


The same result has also been extended in \cite{MR2136820} to the
$L_p$-norms up to constants that are not bounded as $d \to \infty$. See
also \cite{MR2279097} and \cite{MR2330976}.

More recently and in the direction of free probability, Kemp and Speicher \cite{MR2353703} discovered the striking fact that, whereas the constant $(d+1)$ is optimal in \eqref{eq=Haagerup_inequality}, when restricted to (scalar) functions
supported by the set $W_d^+$ of words of length $d$ in the generators
$g_1,\dots, g_r$ but \emph{not their inverses} (it is the holomorphic
setting in the vocabulary of \cite{MR2174419} and \cite{MR2353703}), this
constant $(d+1)$ can be replaced by a constant of order $\sqrt d$.

\begin{thm}[\cite{MR2353703},Theorem 1.4]
\label{thm=Kemp_Speicher}
Let $f:F_r \to \C$ be a function supported on $W_d^+$. Then 
\[ \left\|\sum_{g \in W_d^+} f(g) \lambda(g) \right\|_{C^*_\lambda(F_r)}
\leq \sqrt e \sqrt{d+1} \|f\|_2.\]
\end{thm}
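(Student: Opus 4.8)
The plan is to prove the Kemp–Speicher inequality for holomorphic words by the moment method, bounding the operator norm through traces of high even powers. The key observation is that the operator norm of $\lambda(f) = \sum_{g \in W_d^+} f(g)\lambda(g)$ can be recovered as
\[
\left\|\lambda(f)\right\|_{C^*_\lambda(F_r)} = \lim_{p \to \infty} \left(\tau\bigl((\lambda(f)^*\lambda(f))^p\bigr)\right)^{1/2p},
\]
where $\tau$ is the canonical trace on the group von Neumann algebra. So I would first reduce the problem to estimating $\tau\bigl((x^*x)^p\bigr)$ where $x = \lambda(f)$, and then extract the asymptotic rate as $p \to \infty$. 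The point of working with holomorphic words (generators but not their inverses) is that the combinatorial structure governing which products $g_{i_1}\cdots g_{i_d}$ cancel is far more rigid than in the general Haagerup setting, and this rigidity is exactly what produces the $\sqrt{d+1}$ rather than $(d+1)$.

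Next I would expand $\tau\bigl((x^*x)^p\bigr)$ as a sum over index tuples. Writing $x = \sum_i a_i \lambda(g_{i_1}\cdots g_{i_d})$ with $a_i = f(g_{i_1}\cdots g_{i_d})$ scalar, the trace of the $2p$-fold alternating product $x^* x x^* x \cdots$ becomes a sum of products of the $a_i$'s (with conjugates in the starred positions), where a tuple contributes exactly when the total word reduces to the identity in $F_r$. The problem is thus converted into counting which pairings of the letters force cancellation. In the holomorphic case a generator $g_{j}$ appearing in a non-starred block can only cancel against a $g_j^{-1}$ coming from a starred block, and the free reduction forces a nested, noncrossing matching of the blocks. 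I would make this precise by indexing the surviving contributions by noncrossing pair partitions (or a suitable subclass) of the $2p$ blocks, which is the combinatorial heart of the free-probabilistic approach and the place where the holomorphic constraint cuts down the count dramatically compared to the full set of partitions one sees for the non-holomorphic inequality.

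I would then bound the contribution of each admissible partition by Cauchy–Schwarz applied blockwise, producing a factor of $\|f\|_2^{2p}$ together with a purely combinatorial count of how many noncrossing structures of depth $d$ on $2p$ points exist. The remaining task is to estimate this count and show that, after taking the $2p$-th root and letting $p \to \infty$, it contributes a factor that grows like $\sqrt{e}\,\sqrt{d+1}$ rather than $(d+1)$. The $\sqrt e$ constant is precisely the kind of factor that emerges from a generating-function or Stirling-type estimate on the number of noncrossing partitions refined by the length parameter $d$; I expect the cleanest route is to encode the count in terms of Catalan-type numbers weighted by $d$ and to use that the relevant asymptotics of $\binom{d}{k}$-type sums are dominated by a Gaussian peak, yielding $\sqrt{e}$ in the limit.

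The main obstacle, and the technical core of the argument, will be the combinatorial step: correctly identifying the exact class of partitions that survive the holomorphic reduction and obtaining a sharp (not merely exponential) bound on their number as a function of $d$. A naive bound on the number of noncrossing pairings would only reproduce the $(d+1)$ growth of the classical inequality; getting down to $\sqrt{d+1}$ requires exploiting that holomorphicity forces the matching to respect the block orientation, so that the letters within each block must cancel in a single coherent order rather than in any of the $d!$ possible ways. Controlling this refined count — and tracking the constant carefully enough to land at $\sqrt e$ — is where the real work lies; the trace expansion and the $p \to \infty$ passage are comparatively routine once the right combinatorial model is in place.
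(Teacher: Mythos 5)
Your plan is essentially the same route as the paper's (i.e.\ Kemp--Speicher's, which the paper cites and then generalizes to operator coefficients): expand $\tau((x^*x)^p)$ for even powers, use freeness/holomorphicity to restrict the surviving terms to noncrossing pairings of type $NC^*_2(d,p)$, apply Cauchy--Schwarz per pairing to extract $\|f\|_2^{2p}$, and bound the number of admissible pairings by the Fuss--Catalan number $\frac{1}{p}\binom{p(d+1)}{p-1}\leq e^p(d+1)^p$, whose $2p$-th root yields $\sqrt{e}\sqrt{d+1}$ as $p\to\infty$. The one inessential slip is attributing the $\sqrt{e}$ to a Gaussian-peak estimate of binomial sums; it comes from a Stirling bound on this single Fuss--Catalan count.
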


A similar result has been obtained when the operators $\lambda(g_1),\dots,
\lambda(g_r)$ are replaced by free $\mathscr R$-diagonals elements: Theorem
1.3 in \cite{MR2353703}. These results are proved using combinatorial
methods: to get bounds on operator norms the authors first get bounds for
the norms in the non-commutative $L_p$-spaces for $p$ even integers, and
make $p$ tend to infinity. For an even integer, the $L_p$-norms are
expressed in terms of moments and these moments are studied using the free
cumulants.

In this paper we generalize and improve these results to the
operator-valued case. As for the generalization of the usual Haagerup
inequality, the operator valued inequality we get gives an explanation
of the term $\sqrt{d+1}$: for operator coefficients this term has to
be replaced by the $\ell^2$ combination of the norms $\|M_l\|$
introduced above. A precise statement is the following. We state the
result for the free group $F_\infty$ on countably many generators
$(g_i)_{i\in \N}$, but it of course applies for a free group with
finitely many generators.

\begin{thm} 
  \label{thm=main_result_for_the_free_group}
  For $d \in \N$, denote by $W_d^+ \subset F_\infty$ the set of words
  of length $d$ in the $g_i$'s (but \emph{not their inverses}). For $k
  =(k_1,\dots, k_d) \in \N^d$ let $g_k = g_{k_1} \dots g_{k_d} \in
  W_d^+$.

  Let $a = (a_k)_{k \in \N^d}$ be a finitely supported family of
  matrices, and for $0\leq l \leq d$ denote by
  $M_l=\left(a_{(k_1,\dots,k_l),(k_{l+1},\dots,k_d)} \right)$ the
  corresponding $\N^l \times \N^{d-l}$ block-matrix. Then
\begin{equation} 
\label{eq=haagerup_holomorphe_operator_free_group}
\left\| \sum_{k \in \N^d} a_k \otimes \lambda(g_k)
  \right\| \leq 4^5 \sqrt e \left( \sum_{l=0}^d \|M_l\|^2\right)^{1/2}.
\end{equation}
\end{thm}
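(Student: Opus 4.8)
The plan is to follow the same overall strategy as Kemp--Speicher, but carried out at the level of operator coefficients. Since the target is an operator-norm bound, and operator norms are hard to access directly, the first move is to bound the norms in the non-commutative $L_p$ spaces for $p$ an even integer and then let $p \to \infty$. So I would fix $p = 2m$ even and compute $\|\sum_k a_k \otimes \lambda(g_k)\|_{L_{2m}}^{2m}$ as a trace of a product of $2m$ factors, alternating between the holomorphic word $x = \sum_k a_k \otimes \lambda(g_k)$ and its adjoint $x^*$. Expanding this trace produces a sum over tuples of indices in $(\N^d)^{2m}$, and the point is that the trace $\tau(\lambda(g_{k^{(1)}}) \lambda(g_{k^{(2)}})^* \cdots)$ vanishes unless the concatenated word reduces to the identity in $F_\infty$. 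Because we are in the \emph{holomorphic} setting (no inverses appearing inside each $g_k$), the cancellations are highly constrained: letters from a generator and its inverse can only cancel at the interfaces between adjacent $x$ and $x^*$ blocks, and the surviving terms are indexed by a pairing/matching structure on the $2m$ blocks.

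The key combinatorial step is to organize these surviving index tuples by a noncrossing-type partition of the $2m$ letter-positions, exactly as the free cumulant expansion of the moments suggests, and then to recognize that summing the operator coefficients $a_k$ over the constrained indices is precisely a composition of the block matrices $M_l$. Concretely, each admissible pairing forces certain coordinates of the multi-indices to be equal across different blocks; carrying out the induced sum over those equal coordinates turns a product of entries of the $a_k$'s into a trace of a product of the operators $M_l$ and $M_l^*$ (for varying $l$), where the choice of $l$ at each block records how the length-$d$ word is split at the cancellation interface. This is where the $\ell^2$-combination $(\sum_l \|M_l\|^2)^{1/2}$ has to emerge: I would bound each such product of $M_l$'s in trace norm by the product of its operator norms via H\"older, and then sum over the partitions, using the $\ell^2$ structure to absorb the sum over the splitting parameters $l$ into the square-root expression. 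The counting of admissible partitions should give a factor growing no faster than $\sqrt{e}$ per the Catalan-type asymptotics of noncrossing pairings, mirroring how the $\sqrt{e}$ arises in Kemp--Speicher.

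The main obstacle, and the genuinely new content relative to the scalar case, is the bookkeeping of which coordinate splittings are compatible across an entire admissible partition: in the scalar case the coefficients commute and one only needs to count partitions weighted by $\|f\|_2$, whereas here the operators $a_k$ do not commute and the order in which they appear in the trace matters, so the combinatorics must produce an honest cyclically ordered product of the $M_l$ and $M_l^*$ to which H\"older applies. I expect the abstract/introduction's promised ``symmetrization process for partitions'' to be the device that resolves this: one symmetrizes over the splitting parameter so that each block can be estimated by $(\sum_l \|M_l\|^2)^{1/2}$ uniformly, at the cost of the explicit but dimension-independent constant $4^5 \sqrt e$. Once the $L_{2m}$ bound is established with a constant independent of $m$, letting $m \to \infty$ gives the operator-norm inequality \eqref{eq=haagerup_holomorphe_operator_free_group}, since the $L_{2m}$ norms increase to the $C^*_\lambda$ norm on this finitely supported element. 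The most delicate quantitative point will be verifying that the partition count and the symmetrization overhead together stay bounded by the stated universal constant rather than degrading with $d$ or $p$.
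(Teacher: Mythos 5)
Your overall skeleton --- expand the $2m$-th moment, pass to the free-cumulant expansion over noncrossing partitions, use holomorphicity/$\mathscr R$-diagonality of the $\lambda(g_i)$ to restrict the sum to admissible partitions, and let $m\to\infty$ at the end --- is indeed the route the paper takes (it deduces the theorem as a special case of Theorem \ref{thm=Main_theorem}). But the two steps you treat as routine are exactly where the content lies, and as stated they contain genuine gaps. The first is your claim that, for each admissible partition, carrying out the constrained index sum ``turns a product of entries of the $a_k$'s into a trace of a product of the operators $M_l$ and $M_l^*$'' to which H\"older applies. This is false for general partitions, because of nesting. Already for $d=1$, $m=3$ and the noncrossing pairing $\pi=\left\{\{1,2\},\{3,6\},\{4,5\}\right\}\in NC^*_2(1,3)$ one finds
\[ S(a,\pi,1,3)=Tr\Big[\Big(\sum\nolimits_k a_ka_k^*\Big)\sum\nolimits_{k'}a_{k'}\Big(\sum\nolimits_{k''}a_{k''}^*a_{k''}\Big)a_{k'}^*\Big],\]
which involves the map $X\mapsto\sum_k a_kXa_k^*$ and is not a cyclic product of the $M_l$'s and $M_l^*$'s. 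Only the fully symmetric partitions $\sigma_l^{(d,m)}$, $\widetilde\sigma_l^{(d,m)}$ of Corollary \ref{corollary=symmetrization_for_n} admit such an identification (Lemma \ref{lemma=identifications}). Bridging this gap is the paper's main technical work: one application of Cauchy-Schwarz corresponds to one symmetrization $P_k$ of the partition (Lemma \ref{lemma=CauchySchwarz}), iterating these operations drives any $\pi\in NC^*(d,m)$ to one of the $\sigma_l,\widetilde\sigma_l$, and a martingale argument based on the invariant $B$ of Lemma \ref{lemma=combinatorial_invariant} is needed to compute the exponents that accumulate along the way, yielding $|S(a,\pi,d,m)|\le\prod_l\|M_l\|_{2m}^{2m\mu_l}$ with $\sum_l\mu_l=1$ (Corollary \ref{corollary=operator_Cauchy-Schwarz_inequality}). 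Note that this is a weighted geometric mean whose weights depend on $\pi$; your assertion that symmetrization lets ``each block be estimated by $(\sum_l\|M_l\|^2)^{1/2}$ uniformly'' is not what happens.

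Second, the $\ell^2$ combination cannot be obtained, as you suggest, by multiplying a uniform per-partition bound by a Catalan-type partition count. Doing so --- $|S|\le\max_l\|M_l\|_{2m}^{2m}$ times $|NC^*(d,m)|\le\big(16e(d+1)\big)^m$ --- only gives a bound of order $\sqrt{d+1}\,\max_l\|M_l\|$, which is weaker than \eqref{eq=haagerup_holomorphe_operator_free_group}. To do better, the paper groups partitions by their image under $\mathcal P:NC^*(d,m)\to NC(m)^{(d)}$, whose fibers have cardinality at most $4^{2m}$ \emph{independently of $d$} (Theorem \ref{thm=decomposition_of_NC*nm}); within a fiber the exponents $\mu_l$ are fixed, the number of chains with prescribed jumps $s_l=(m-1)\mu_l$ is $(1/m)\binom{m}{s_0}\cdots\binom{m}{s_d}$ by \cite{MR583216}, and a concavity estimate turns the resulting weighted sum into $e^{O(m)}(\gamma_0+\dots+\gamma_d)^{m-1}$ with $\gamma_l=\|M_l\|_{2m}^{2m/(m-1)}$; the $\ell^2$ norm finally appears via the inclusion $\ell^{2m/(m-1)}\subseteq\ell^2$. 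One also needs the cumulant bound of Lemma \ref{lemma=domination_of_cumulants_with_many_pairs} (the free cumulants of Haar unitaries are nonzero in every even order, so blocks of size greater than $2$ really occur, and controlling their contribution uses the block-size statement of Theorem \ref{thm=decomposition_of_NC*nm}). These steps, which your sketch leaves as expectations, are where both the $\ell^2$ form of the right-hand side and the constant $4^5\sqrt e$ actually come from.
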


Note that even when $a_k \in \C$, this really is (up to the constant $4^5$) an
improvement of Theorem \ref{thm=Kemp_Speicher}. Indeed it is always true
that for any $l$, $\|M_l\|^2 \leq Tr(M_l^* M_l) = \sum_k |a_k|^2$. There is
equality when $l=0$ or $d$ but the inequality is in general strict when $0
< l < d$. Thus if the $a_k$'s are scalars such that $\|(a_k)\|_2=1$ and
$\|M_l\|\leq 1/\sqrt d$ for $0<l<d$, the inequality in Theorem
\ref{thm=main_result_for_the_free_group} becomes $\left\| \sum_{k \in \N^d}
  a_k \lambda(g_k) \right\| \leq 4^5 \sqrt{3e} \|(a_k)\|_2$. Since the
reverse inequality $\left\| \sum_{k \in \N^d} a_k \lambda(g_k) \right\|
\geq \|(a_k)\|_2$ always holds, we thus get that $\left\| \sum_{k \in \N^d}
  a_k \lambda(g_k) \right\| \simeq \|(a_k)\|_2$ with constants independent
of $d$. An example of such a family is given by the following construction:
if $p$ is a prime number and $a_{k_1,\dots,k_d} = \exp(2i \pi k_1 \dots
k_d/p)/p^{d/2}$ for any $k_i \in \{1,\dots ,p\}$ and $a_k=0$ otherwise then
obviously $\sum_k |a_k|^2=1$, whereas a computation (see Lemma
\ref{lemma=norm_of_matrices_for_p_prime}) shows that $\|M_l\|^2 \leq d/p$
if $0 < l<d$. It is thus enough to take $p \geq d^2$.

As in \cite{MR2353703}, the same arguments apply for the more general
setting of $*$-free $\mathscr R$-diagonal elements ($*$-free means that the
$C^*$-algebras generated are free). Moreover we get significant results
already for the $L_p$-norms for $p$ even integers. Recall that on a
$C^*$-algebra $\mathcal A$ equipped with a trace $\tau$, the $p$-norm of an
element $x \in \mathcal A$ is defined by $\|x\|_p= \tau(|x|^p)^{1/p}$ for
$1\leq p <\infty$, and that for $p=\infty$ the $L^\infty$ norm is just the
operator norm. In the following the algebra $M_n\otimes \mathcal A$ will be
equipped with the trace $Tr \otimes \tau$. The most general statement we
get is thus:

\begin{thm}
\label{thm=Main_theorem} 
Let $c$ be an $\mathscr R$-diagonal operator and $(c_k)_{k \in \N}$ a
family of $*$-free copies of $c$ on a tracial $C^*$-probability space
$(\mathcal A,\tau)$. Let $(a_k)_{k \in \N^d}$ be as above a finitely
supported family of matrices and
$M_l=\left(a_{(k_1,\dots,k_l),(k_{l+1},\dots,k_d)} \right)$ for $0\leq
l \leq d$ the corresponding $\N^l \times \N^{d-l}$ block-matrix.

For $k =(k_1,\dots, k_d) \in \N^d$ denote $c_k = c_{k_1} \dots c_{k_d}$.

Then for any integer $m$,
  \begin{equation}
\label{eq=haagerup_holomorphe_operator_Rdiag}
 \left\| \sum_{k \in \N^d} a_k \otimes c_k \right\|_{2m}
    \leq 4^5 \|c\|_2^{d-2} \|c\|_{2m}^{2} e \sqrt{1 + \frac d m} \left(
      \sum_{l=0}^d \|M_l\|_{2m}^2\right)^{1/2}.
\end{equation}

For the operator norm,
\begin{equation} \left\| \sum_{k \in \N^d} a_k \otimes c_k \right\| \leq
  4^5 \|c\|_2^{d-2} \|c\|^{2} \sqrt{e} \left( \sum_{l=0}^d
    \|M_l\|^2\right)^{1/2}.
\end{equation}

When the $c_k$'s are circular these inequalities are valid without
the factor $4^5 \|c\|_2^{d-2} \|c\|^{2}$.
\end{thm}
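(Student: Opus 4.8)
The plan is to follow the strategy laid out in the abstract and introduction: reduce the operator-norm bound to the $L_{2m}$ bound by letting $m \to \infty$, and attack the $L_{2m}$ inequality \eqref{eq=haagerup_holomorphe_operator_Rdiag} by the moment method. Since $p=2m$ is even, I would write the $2m$-norm of $X = \sum_k a_k \otimes c_k$ as
\begin{equation*}
\|X\|_{2m}^{2m} = (Tr\otimes\tau)\bigl((X^*X)^m\bigr),
\end{equation*}
and expand $(X^*X)^m$ as a sum over tuples of indices in $\N^d$ of products of the matrix coefficients $a_k, a_{k'}^*$ tensored with alternating products of the $\mathscr R$-diagonal operators $c_k$ and their adjoints. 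The value of $\tau$ applied to such an alternating word is governed by free probability: for $*$-free $\mathscr R$-diagonal elements the mixed moments are computed from the free cumulants, and only certain pairings/partitions of the $2dm$ letters contribute. So the first main step is to identify exactly which index-patterns survive, i.e.\ to write $\|X\|_{2m}^{2m}$ as a sum over a combinatorial set of admissible partitions $\pi$ of a weighted matrix-trace term times a scalar moment factor depending only on $c$ (this is where the $\|c\|_2^{d-2}\|c\|_{2m}^2$ and, in the circular case, the purely Gaussian-like structure enter).

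The second and central step is the combinatorial heart promised by the abstract: the \emph{symmetrization process for partitions}. Once the surviving partitions are catalogued, each contributes a trace of a long product of the $a_k$'s and $a_k^*$'s contracted according to $\pi$, and I would bound this trace by a product of operator norms $\|M_{l(\pi)}\|_{2m}$ of the block-matrices $M_l$ — the point being that each admissible $\pi$ determines, via where its blocks cut the index string $I^d \simeq I^l \times I^{d-l}$, a specific cut level $l$, so that Cauchy–Schwarz / the matrix Hölder inequality turns the contracted trace into $\prod \|M_{l_j}\|_{2m}$. Summing over all admissible $\pi$ and applying the arithmetic–geometric or Cauchy–Schwarz inequality across the $m$ factors should produce the factor $\bigl(\sum_{l=0}^d \|M_l\|_{2m}^2\bigr)^{m}$, after taking the $2m$-th root. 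The delicate quantitative point is counting: the number of admissible partitions must be controlled by something like $C^m (1 + d/m)^m$ so that, after the $2m$-th root and $m\to\infty$, one recovers the constants $4^5\sqrt e$ and $\sqrt{1+d/m}$ in \eqref{eq=haagerup_holomorphe_operator_Rdiag}. I would set up the bookkeeping so that the symmetrization identifies pairs of partitions with the same block-structure, collapsing the count to a manageable Catalan-type or exponential bound.

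For the passage to the operator norm, I would take $m \to \infty$ in \eqref{eq=haagerup_holomorphe_operator_Rdiag}: since $\|c\|_{2m}\to\|c\|$ and $\|M_l\|_{2m}\to\|M_l\|$ and $\sqrt{1+d/m}\to 1$, the $L_{2m}$ bound converges to the stated operator-norm inequality, with the residual constant $4^5 \|c\|_2^{d-2}\|c\|^2 \sqrt e$. The free-group statement \eqref{eq=haagerup_holomorphe_operator_free_group} of Theorem~\ref{thm=main_result_for_the_free_group} is then the special case where the $c_k = \lambda(g_k)$ are Haar unitaries (which are $\mathscr R$-diagonal with $\|c\|_2 = \|c\| = 1$), so the prefactor $\|c\|_2^{d-2}\|c\|^2$ equals $1$ and only $4^5\sqrt e$ survives; the circular case is similar, and there the finer structure of circular moments eliminates the $4^5\|c\|_2^{d-2}\|c\|^2$ factor entirely.

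I expect the main obstacle to be the combinatorial counting in the second step: controlling the number and weights of the admissible partitions tightly enough to get a constant \emph{independent of} $d$ (i.e.\ getting $\sqrt e$ rather than something growing with $d$), while simultaneously routing each partition's trace contribution through the \emph{correct} block-matrix norm $\|M_l\|$. The symmetrization process is precisely the device needed to organize this; making it respect both the free-cumulant constraints and the matrix-contraction structure at once — so that the holomorphic (one-sided, inverse-free) restriction is what forces the cut levels to distribute favorably and yields the $\ell^2$-combination $\bigl(\sum_l \|M_l\|^2\bigr)^{1/2}$ rather than the $\sum_l \|M_l\|$ of the non-holomorphic Buchholz bound — is where the real work lies.
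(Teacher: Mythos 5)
Your outline reproduces the paper's own strategy (moment expansion for $p=2m$, cumulants, restriction to the admissible partitions $NC^*(d,m)$, a Cauchy--Schwarz/symmetrization bound on each partition's trace contribution, a count, then $m\to\infty$), so there is no divergence of approach; the problem is that the two load-bearing steps are only named, not supplied, and the way you describe them would not close. First, the bound on a single term $S(a,\pi,d,m)$: it is not true that each admissible $\pi$ selects ``a specific cut level $l$'', nor does the trace decouple into $m$ factors each contributing one $\|M_{l_j}\|_{2m}$ so that Cauchy--Schwarz ``across the $m$ factors'' can be applied. The blocks of $\pi$ tie the $m$ factors of $(X^*X)^m$ together, and the correct statement (Corollary \ref{corollary=operator_Cauchy-Schwarz_inequality}) is $|S(a,\pi,d,m)|\leq\prod_{l=0}^d\|M_l\|_{2m}^{2m\mu_l}$ with \emph{fractional} exponents $\mu_l\geq 0$, $\sum_l\mu_l=1$. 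Producing these exponents is where the symmetrization machinery actually does work: one needs the invariant $B(\pi|_{A_i})$, the identity $B(\pi)=\frac12\big(B(P_k(\pi))+B(P_{k+m}(\pi))\big)$ (Lemma \ref{lemma=combinatorial_invariant}), and a martingale/submartingale argument along the random iteration of the maps $P_{kd}$, terminating at the fully symmetric partitions $\sigma_l^{(d,m)}$, $\widetilde\sigma_l^{(d,m)}$ for which $S$ is identified with $\|M_l\|_{2m}^{2m}$ (Lemma \ref{lemma=identifications}). None of this mechanism appears in your proposal, and without it the exponents $\mu_l$ are unknown.

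Second, and more fatally, the summation over $\pi$: your plan ``number of admissible partitions $\times$ largest term'' cannot give a constant independent of $d$. Indeed $|NC^*(d,m)|$ is of order $\big(16e(d+1)\big)^m$, and $\prod_l\|M_l\|^{2m\mu_l}\leq\max_l\|M_l\|_{2m}^{2m}$, so naive counting leaves an extra factor $\sqrt{d+1}$ after the $2m$-th root -- i.e.\ a bound of non-holomorphic type, not the $\ell^2$-combination. The paper avoids this by fibering $NC^*(d,m)$ over the chain set $NC(m)^{(d)}$ via $\mathcal P$ with fiber size at most $4^{2m}$ \emph{independent of $d$} (Theorem \ref{thm=decomposition_of_NC*nm}), and then -- crucially -- using the exact enumeration of chains with prescribed block-number increments, $(1/m)\binom{m}{s_0}\cdots\binom{m}{s_d}$ from \cite{MR583216}, to weight each exponent vector $(\mu_l)=(s_l/(m-1))$ by its true multiplicity; only after this correlation do the log-concavity estimates yield $e^m\big(\sum_l\gamma_l\big)^{m-1}\binom{m+d-1}{d}$ and hence, via $\ell^{2m/(m-1)}\subset\ell^2$, the factor $\big(\sum_l\|M_l\|_{2m}^2\big)^{1/2}$ with the residual $d$-dependence confined to $\sqrt{1+d/m}$. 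Likewise, your assertion that the prefactor $\|c\|_2^{d-2}\|c\|_{2m}^2$ ``enters'' at the cumulant stage needs the structural fact that every $\pi\in NC^*(d,m)$ has at least $dm-2m$ pair blocks and blocks of size at most $2m$ (second half of Theorem \ref{thm=decomposition_of_NC*nm}) combined with Lemma \ref{lemma=domination_of_cumulants_with_many_pairs}; this is not automatic from $\mathscr R$-diagonality alone. In short, you have correctly transcribed the roadmap from the introduction, but the two ideas that make the theorem true with a $d$-independent constant -- the martingale invariant computing the $\mu_l$, and the $d$-independent fibration over $NC(m)^{(d)}$ with the Edelman chain count -- are missing, and the shortcuts you propose in their place provably lose a factor growing with $d$.
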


The outline of the proof of Theorem \ref{thm=Main_theorem} is the same
as the proof of Theorem 1.3 in \cite{MR2353703}: we first prove the
statement for the $L_p$-norms when $p=2m$ is an even integer (letting
$p \to \infty$ leads to the statement for the operator norm). This is
done with the use of free cumulants that express moments in terms of
non-crossing partitions (the definition of non-crossing partitions is
recalled in part \ref{susbection=study_of_NCstar_d_m}). More precisely
to any integer $n$, any non-crossing partition $\pi$ of the set
$\{1,\dots,n\}$ and any family $b_1,\dots,b_n \in \mathcal A$ the free
cumulant $\cum_\pi[b_1,\dots,b_n] \in \C$ is defined (see
\cite{MR2266879} for a detailed introduction). When $\pi=1_n$ is the
partition into only one block, $\cum_\pi$ is denoted by $\cum_n$. The
free cumulants have the following properties:
\begin{itemize}
\item {\it Multiplicativity:} If $\pi = \{V_1,\dots,V_s\}$,
  $\cum_\pi[b_1,\dots,b_n] = \prod_i \cum_{|V_i|}[(b_k)_{k \in V_i}]$.
\item {\it Moment-cumulant formula:} $\tau(b_1,\dots,b_n) = \sum_{\pi \in
    NC(n)} \cum_\pi[b_1,\dots,b_n]$.
\item {\it Characterization of freeness:} A family $(\mathcal A_i)_i$ of
  subalgebras is free iff all mixed cumulants vanish, \emph{i.e.} for any
  $n$, any $b_k \in \mathcal A_{i_k}$ and any $\pi \in NC(n)$ then
  $\cum_\pi[b_1,\dots,b_n]=0$ unless $i_k=i_l$ for any $k$ and $l$ in a
  same block of $\pi$.
\end{itemize}
The first two properties characterize the free cumulants (and hence could be
taken as a definition), whereas the third one motivates their use in free
probability theory. Since the $*$-distribution of an operator $c\in
(\mathcal A,\tau)$ is characterized by the trace of the polynomials in $c$
and $c^*$, the cumulants involving only $c$ and $c^*$ (that is the
cumulants $\cum_\pi[(b_i)]$ with $b_i \in \{c,c^*\}$ for any $i$) depend
only on the $*$-distribution of $c$.

In order to motivate the combinatorial study of certain non-crossing
partitions in the first section, let us shortly sketch the proof of the
main result. For details, see part \ref{part=holomorphic_setting_proof}.
With the notation of Theorem \ref{thm=Main_theorem} let $A=\sum a_k \otimes
c_k$. For $k=(k(1),\dots,k(d))\in \N^d$ set $\widetilde a_k =
a_{(k(d),\dots,k(1)}$ and $\widetilde c_k = c_{k(d)}\dots c_{k(1)}$ so that
$(\widetilde c_k)^*=c_{k(1)}^*\dots c_{k(d)}^*$. Then $A^* = \sum_k
\widetilde a_k^* \otimes \widetilde c_k^*$, and for $p=2m$ the $p$-th power
of the $p$-norm of $A$ is just the trace $Tr \otimes \tau$ of $(A A^*)^m$,
which can be expressed by linearity as the sum of the terms of the form
$Tr(a_{k_1} \widetilde a_{k_2}^* \dots a_{k_{2m-1}} \widetilde
a_{k_{2m}}^*) \otimes \tau(c_{k_1} \widetilde c^*_{k_2} \dots c_{k_{2m-1}}
\widetilde c_{k_{2m}}^*)$. The expression $c_{k_1} \widetilde c^*_{k_2}
\dots c_{k_{2m-1}} \widetilde c_{k_{2m}}^*$ is the product of $2dm$ terms
of the form $c_i$ or $c_i^*$ (for $i \in \N$). Apply the moment-cumulant
formula to its trace. Using the characterization of freeness with cumulants
and then the multiplicativity of cumulants and the fact that cumulants only
depend on the $*$-distribution we thus get
\[
\left\| \sum_{k \in \N^d} a_k \otimes c_k \right\|_{2m}^{2m} = \sum_{\pi
  \in NC(2dm)} \cum_\pi[c_{d,m}] \underbrace{\sum_{(k_1,\dots, k_{2m})
    \respecte \pi} Tr(a_{k_1} \widetilde a_{k_2}^* \dots \widetilde
  a_{k_{2m}}^*)}_{ \egdef S(a,\pi,d,m)},
\]
where for $k \in \N^{2dm}$ and $\pi \in NC(2dm)$ we write $k \respecte \pi$ if $k_i=k_j$ whenever $i$ and $j$ belong to the same block of $\pi$ and where
\[c_{d,m} = \overbrace{\underbrace{c, \dots, c}_{d} ,\underbrace{c^*, \dots,
    c^*}_{d}, \dots ,\underbrace{c, \dots ,c}_{d}, \underbrace{c^*, \dots,
    c^*}_{d}}^{2m\textrm{ groups}}.
\]

Up to this point we did not use the assumption that $c$ is $\mathcal
R$-diagonal. But as in \cite{MR2353703}, since the $\mathcal R$-diagonal
operators are those operators for which the list of non-zero cumulants is
very short (see part \ref{part=holomorphic_setting_proof} for details), we
get that the previous sum can be restricted to a sum over the partitions in
the subset $NC^*(d,m)\subset NC(2dm)$, which is defined and extensively
studied in part \ref{susbection=study_of_NCstar_d_m}:
\begin{equation}
 \label{eq=formula_of_2m_norms_in_terms_of_cumulants}
  \left\| \sum_{k \in \N^d} a_k \otimes c_k \right\|_{2m}^{2m}  = 
  \sum_{\pi \in NC^*(d,m)} \cum_\pi[c_{d,m}] S(a,\pi,d,m).
\end{equation}
The term $\cum_\pi[c_{d,m}]$ is easy to dominate (Lemma \ref{lemma=domination_of_cumulants_with_many_pairs}). When the $a_k$'s are scalars the second term $S(a,\pi,d,m)$ can be dominated by $\|(a_k)\|_{\ell^2}^{2m}$ (by the usual Cauchy-Schwarz inequality). This is what is done in the proof of \cite{MR2353703}. But here the fact that we are dealing with operators and not scalars forces
to derive a more sophisticated Cauchy-Schwarz type inequality that may
control explicitly the expressions $S(a,\pi,d,m)$ in terms of norms of the
operators $M_l$. This is one of the main technical results in this paper,
Corollary \ref{corollary=operator_Cauchy-Schwarz_inequality}. This Corollary states that 
\begin{equation}
\label{eq=majoration_ultime_de_S}
  |S(a,\pi,d,m)| \leq \prod_{l=0}^d \left\|M_l \right\|_{2m}^{2m\mu_l}
\end{equation}
for some non-negative $\mu_l$ with $\sum_l \mu_l=1$. Moreover the $\mu_l$
are explicitely described by some combinatorial properties of $\pi$.  This
inequality is proved through a process of ``symmetrization'' of partitions.
The basic observation is that if one applies a simple Cauchy-Schwarz
inequality to $S(a,\pi,d,m)$ (Lemma \ref{lemma=CauchySchwarz}), this
corresponds on the level of partitions to a certain combinatorial operation
of symmetrization that is studied in the part
\ref{part=def_and_observations}. This observation was already used
implicitely in \cite{MR1812816}, Lemma 2, in some special case: Buchholz
indeed notices that for $d=1$ and if $\pi$ is a pairing (\emph{i.e.} has
blocks of size $2$), this Cauchy-Schwarz inequality corresponds to some
transformation of pairings (for which he does not give a combinatorial
description), and that iterating this inequality eventually leads to an
domination of the form \eqref{eq=majoration_ultime_de_S} (for $d=1$) but in
which he does not compute the exponents $\mu_0$ and $\mu_1$.

In our more general setting it also appears that repeating this operation
in an appropriate way turns every non-crossing partition $\pi \in
NC^*(d,m)$ into one very simple and fully symmetric partition for which the
expression $S(a,\pi,d,m)$ is exactly the ($2m$-power of the $2m$-) norm of
one of the $M_l$'s.  This is stated and proved in Corollary
\ref{corollary=symmetrization_for_n} and Lemma \ref{lemma=identifications}.
One important feature of our study of the symmetrization operation on $NC^*(d,m)$ is the fact that we are able to determine some combinatorial invariants of this operation (see part
\ref{part=invariants_of_P_k}). This allows to keep track of the exponents of
the $\|M_l\|_{2m}$ that progressively appear during the symmetrization
process, and to compute the coefficients $\mu_l$ in \eqref{eq=majoration_ultime_de_S}.

The second technical result that we prove and use is a finer study of
$NC^*(d,m)$. The main conclusion is Theorem
\ref{thm=decomposition_of_NC*nm} which expresses that partitions in
$NC^*(d,m)$ have mainly blocks of size $2$ and that $NC^*(d,m)$ is not very
far from the set $NC(m)^{(d)}$ of non-decreasing chains of non-crossing
partitions on $m$ (in the sense that there is a natural surjection
$NC^*(d,m) \to NC(m)^{(d)}$ such that the fiber of any point has a
cardinality dominated by a term not depending on $d$). This combinatorial result is then generalized in Theorem \ref{thm=decomposition_of_NCn_m} and Lemma \ref{thm=description_of_NCpart_withKintervals},
and then used to transform the sum in \eqref{eq=formula_of_2m_norms_in_terms_of_cumulants} into a sum over $NC(m)^{(d)}$ for which the combinatorics are well known by \cite{MR583216}.

We prove also the following results, which are extensions to the
non-holomorphic case of the previous results and their proofs. Let $c$ be
an $\mathscr R$-diagonal operator and $(c_k)_{k \in \N}$ a family of
$*$-free copies of $c$ on a tracial $C^*$-probability space $(\mathcal
A,\tau)$. For $\varepsilon =
(\varepsilon_1,\dots,\varepsilon_d)\in\{1,*\}^d$ and
$k=(k_1,\dots,k_d)\in\N^d$ denote $c_{k,\varepsilon} =
c_{k_1}^{\varepsilon_1} \dots c_{k_d}^{\varepsilon_d}$. The result is an
extension of Haagerup's inequality for the space generated by the
$c_{k,\varepsilon}$ for the $k,\varepsilon$ satisfying $k_i=k_{i+1}
\Rightarrow \varepsilon_i = \varepsilon_{i+1}$, \emph{i.e.} for which
$\lambda(g)_{k,\varepsilon}$ has length $d$. Denote by $I_d$ the set of
such $(k,\varepsilon)$.

\begin{thm}\label{thm=non_holomo_Rdiag}
  Let $(a_{(k,\varepsilon)})_{(k,\varepsilon) \in (\N \times\{1,*\})^d}$ be
  a finitely spported family such that $a_{(k,\varepsilon)}=0$ for
  $(k,\varepsilon) \notin I_d$. For $0 \leq l \leq d$, let $M_l$ be the
  matrix formed as above from $(a_{(k,\varepsilon)})$ for the decomposition
  $(\N \times\{1,*\})^d = (\N \times\{1,*\})^l \times (\N
  \times\{1,*\})^{d-l}$.

  Then for any $p \in 2\N \cup \{\infty\}$
  \[ \left\|\sum_{(k,\varepsilon) \in (\N \times\{1,\varepsilon\})^d}
    a_{k,\varepsilon} \otimes c_{k,\varepsilon}\right\|_p \leq 4^5
  \|c\|_p^2 \|c\|_2^{d-2} (d+1) \max_{0 \leq l\leq d} \|M_l\|_p.\]
\end{thm}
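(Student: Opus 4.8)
The plan is to adapt the holomorphic argument (the proof of Theorem~\ref{thm=Main_theorem}) to the non-holomorphic setting by running the same cumulant expansion, but keeping track of the extra combinatorial freedom coming from the signs $\varepsilon$. As before, write $A = \sum_{(k,\varepsilon)} a_{k,\varepsilon}\otimes c_{k,\varepsilon}$ and expand $\|A\|_p^p = (Tr\otimes\tau)((AA^*)^m)$ for $p = 2m$ by linearity, so that it becomes a sum over $NC(2dm)$ of products $\cum_\pi[\,\cdot\,]\, S(a,\pi,d,m)$. The crucial point is that $A^*$ involves the adjoints $c_{k,\varepsilon}^* = c_{k_d}^{\bar\varepsilon_d}\dots c_{k_1}^{\bar\varepsilon_1}$, so the word fed into the cumulant is no longer the rigidly alternating $c_{d,m}$ of the holomorphic case but an arbitrary word in $c$ and $c^*$. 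First I would record which partitions $\pi$ can contribute a nonzero cumulant $\cum_\pi$: because $c$ is $\mathscr R$-diagonal, the only surviving cumulants pair a $c$ with a $c^*$ (up to the short list described in part~\ref{part=holomorphic_setting_proof}), so again the relevant $\pi$ lie in a set of "admissible" non-crossing partitions closely related to $NC^*(d,m)$, with blocks essentially of size two matching a letter to a letter of opposite type.

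Next I would bound the two factors separately, exactly as in the holomorphic case. The cumulant factor $\cum_\pi[\,\cdot\,]$ is controlled by Lemma~\ref{lemma=domination_of_cumulants_with_many_pairs}, which yields the $\|c\|_p^2\|c\|_2^{d-2}$ prefactor and contributes the $\|c\|$-dependence in the statement. For the coefficient factor $S(a,\pi,d,m)$ I would invoke the operator Cauchy--Schwarz inequality, Corollary~\ref{corollary=operator_Cauchy-Schwarz_inequality}, in the form \eqref{eq=majoration_ultime_de_S}: for each admissible $\pi$ there are non-negative exponents $\mu_l$ summing to $1$ with $|S(a,\pi,d,m)| \le \prod_{l=0}^d \|M_l\|_p^{2m\mu_l} \le \max_{0\le l\le d}\|M_l\|_p^{2m}$, the last step being just the weighted AM--GM / convexity bound that replaces the delicate $\ell^2$-combination by a single maximum. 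This immediately reduces everything to a counting problem: the $2m$-th power of the bound we seek is $(4^5)^{2m}\|c\|_p^{2\cdot 2m}\|c\|_2^{(d-2)2m}(d+1)^{2m}\max_l\|M_l\|_p^{2m}$, so it suffices to show that the total number (weighted by the cumulant constant already extracted) of admissible partitions is at most $(\text{const}\cdot(d+1))^{2m}$.

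The main obstacle, and the place where the non-holomorphic case genuinely differs, is this counting of admissible partitions. In the holomorphic setting the rigid alternation of $c$'s and $c^*$'s severely constrains the pairings, and Theorem~\ref{thm=decomposition_of_NC*nm} shows $NC^*(d,m)$ is only a bounded-fiber cover of the chains $NC(m)^{(d)}$, whose cardinality is governed by Fuss--Catalan combinatorics via~\cite{MR583216}; this is what produces the clean $\sqrt{1+d/m}$ and, in the limit, $\sqrt e$ constant. In the non-holomorphic case the signs $\varepsilon$ can vary freely (subject only to the length-$d$ constraint encoded by $I_d$), so a given letter can in principle be paired in more ways, and the count is larger. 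I would handle this by the generalization already announced in the excerpt: Theorem~\ref{thm=decomposition_of_NCn_m} and Lemma~\ref{thm=description_of_NCpart_withKintervals} extend the structural description of $NC^*(d,m)$ to the partitions relevant here, again as a bounded-fiber surjection onto $NC(m)^{(d)}$, but now with a fiber bound that grows like $(d+1)$ rather than being uniformly bounded. Summing over the base $NC(m)^{(d)}$ with the cumulant weights then reproduces, after taking $2m$-th roots and letting $m\to\infty$ for the operator-norm case, precisely the claimed factor $4^5\|c\|_p^2\|c\|_2^{d-2}(d+1)$. The reason the exponent of $(d+1)$ here is $1$ (linear growth, matching the classical Haagerup inequality) rather than $1/2$ is exactly that the freedom in the $\varepsilon$'s enlarges the fibers by a factor of order $d$, so the loss of holomorphy costs one extra power of $(d+1)$ compared with Theorem~\ref{thm=Main_theorem}.
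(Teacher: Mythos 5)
Your overall architecture (cumulant expansion, bound the cumulant factor via Lemma \ref{lemma=domination_of_cumulants_with_many_pairs}, bound the coefficient factor by a Cauchy--Schwarz/symmetrization argument, then count admissible partitions) is the paper's, but the key analytic step is justified by a result that does not cover the partitions actually arising here. The partitions surviving the expansion in the non-holomorphic case are exactly those of $NC(d,m)$: blocks of even size, no two elements in a common interval $J_k$ (this is where the definition of $I_d$, $\mathscr R$-diagonality and Lemma \ref{lemma=characterization_of_NCdm} enter), together with the requirement that $1$'s and $*$'s alternate along each block. These partitions need not be finer than the label partition $\{A_1,\dots,A_d\}$, so they are \emph{not} ``closely related to $NC^*(d,m)$''; $NC(d,m)$ is a genuinely larger set, and this enlargement is precisely why the constant degrades from $\sqrt{d+1}$ to $d+1$. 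Consequently Corollary \ref{corollary=operator_Cauchy-Schwarz_inequality}, which you invoke for the bound $|S(a,\pi,d,m)|\leq\prod_l\|M_l\|_{2m}^{2m\mu_l}$, simply does not apply: its exponents $\mu_l$ are defined through the invariant $B(\pi{\left|_{A_l}\right.})$, which only makes sense for label-preserving partitions (Lemma \ref{lemma=equivalent_def_of_NCstardm}). The correct substitute is Lemma \ref{lemma=symmetrization_forNCdm} --- the maps $P_{kd}$ preserve $NC(d,m)$ and the symmetrization process still terminates at the partitions $\sigma_l^{(d,m)}$, $\widetilde\sigma_l^{(d,m)}$ --- which, combined with an iteration of the Cauchy--Schwarz lemma and the identification lemma, yields directly $|S(a,\pi,d,m)|\leq\max_l\|M_l\|_{2m}^{2m}$ with no exponents to compute. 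You never invoke this lemma, so the coefficient bound is unsupported as written. In the same vein, for $\mathscr R$-diagonal $c$ the coefficient sums must carry the signs: one needs $\breve a_{k,\varepsilon}$, the modified sums $\widetilde S(a,\pi,d,m)$ in which $(k,\varepsilon)\respecte\pi$ also encodes the alternation of $1$'s and $*$'s, and the corresponding variants of the two lemmas (Lemmas \ref{lemma=CauchySchwarzbis} and \ref{lemma=identificationsbis}); your sketch ignores this bookkeeping entirely.

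Your description of the counting step is also not the mechanism that produces the bound. The estimate $|NC(d,m)|\leq(4d+4)^{2m}$ of Theorem \ref{thm=decomposition_of_NCn_m} does not come from a surjection onto the chains $NC(m)^{(d)}$ with fibers of size of order $d+1$; it comes from the map $Q$ onto $\mathscr I(d,m)$, the set of non-crossing \emph{pair} partitions not connecting two elements of a common interval $J_k$. There the fibers are bounded by $4^{2m-2}$ \emph{uniformly in} $d$ (Lemma \ref{thm=description_of_NCpart_withKintervals}), and it is the base whose cardinality is at most $(d+1)^{2m}$, via the Biane--Speicher identification $|\mathscr I(d,m)|=\tau(T_d(s)^{2m})$ with Tchebycheff polynomials. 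The Fuss--Catalan combinatorics of $NC(m)^{(d)}$ and the reference \cite{MR583216} play no role in this theorem. Your final count $\big(\mathrm{const}\cdot(d+1)\big)^{2m}$ is the right one, but the holomorphic structure you propose to derive it from would not produce it; you have transposed the roles of base and fiber.
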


Similarly for self-adjoint operators we have:
\begin{thm}\label{thm=non_holomo_selfadj}
  Let $\mu$ be a symmetric compactly supported probability measure on $\R$,
  and $c$ a self-adjoint element of a tracial $C^*$-algebra distributed as
  $\mu$.

  Let $(c_k)_{k\in\N}$ be self-adjoint free copies of $c$ and
  $(a_{k_1,\dots,k_d})_{k_1,\dots ,k_d \in \N}$ be a finitely supported
  family of matrices such that $a_{k_1,\dots,k_d}=0$ if $k_i = k_{i+1}$ for
  some $1\leq i<d$. Then for any $p \in 2\N\cup \{\infty\}$
  \begin{equation}
\label{eq=non_holomo_selfadj}
 \left\|\sum_{(k_1,\dots,k_d) \in \N^d} a_{k_1,\dots,k_d} \otimes
    c_{k_1} \dots c_{k_d} \right\|_p \leq 4^5 \|c\|_p^2 \|c\|_2^{d-2} (d+1)
  \max_{0 \leq l\leq d} \|M_l\|_p.
\end{equation}
\end{thm}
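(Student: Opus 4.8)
The plan is to deduce Theorem~\ref{thm=non_holomo_selfadj} from Theorem~\ref{thm=non_holomo_Rdiag} rather than to re-run the combinatorial machinery, by exploiting the elementary observation that a self-adjoint element is $\mathscr R$-diagonal precisely when its distribution is symmetric. Indeed, for a self-adjoint $c$ one has $c=c^*$, so every $*$-cumulant $\cum_n(c^{\varepsilon_1},\dots,c^{\varepsilon_n})$ equals the ordinary free cumulant $\cum_n[c,\dots,c]$, independently of $\varepsilon$; the defining condition that the only non-vanishing $*$-cumulants of an $\mathscr R$-diagonal operator are the alternating ones of even length then reduces, in the self-adjoint case, to the vanishing of all odd cumulants. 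Since the odd free cumulants of $c$ vanish if and only if the odd moments of $c$ vanish, i.e. if and only if the distribution $\mu$ is symmetric, the hypotheses of Theorem~\ref{thm=non_holomo_selfadj} guarantee that $c$, and hence each free copy $c_k$, is a ($*$-free) $\mathscr R$-diagonal operator, with $\|c\|_p$ and $\|c\|_2$ unchanged.

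First I would record this reduction and then encode the support hypothesis ``$a_{k_1,\dots,k_d}=0$ whenever $k_i=k_{i+1}$'' into the index constraint $I_d$ of Theorem~\ref{thm=non_holomo_Rdiag}. Fix once and for all the alternating sign pattern $\varepsilon^0=(1,*,1,*,\dots)\in\{1,*\}^d$ and define a family on $(\N\times\{1,*\})^d$ by $a_{(k,\varepsilon)}=a_k$ if $\varepsilon=\varepsilon^0$ and $a_{(k,\varepsilon)}=0$ otherwise. Because $\varepsilon^0_i\neq\varepsilon^0_{i+1}$ for every $i$, the pair $(k,\varepsilon^0)$ fails to lie in $I_d$ exactly when $k_i=k_{i+1}$ for some $i$; thus the self-adjoint support hypothesis says precisely that the new family is supported on $I_d$, as required by Theorem~\ref{thm=non_holomo_Rdiag}.

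It then remains to check that the two problems are literally the same. Since $c^{\varepsilon}=c$ for self-adjoint $c$, one has $c_{k,\varepsilon^0}=c_{k_1}^{\varepsilon^0_1}\dots c_{k_d}^{\varepsilon^0_d}=c_{k_1}\dots c_{k_d}$, so that $\sum_{(k,\varepsilon)}a_{(k,\varepsilon)}\otimes c_{k,\varepsilon}=\sum_k a_k\otimes c_{k_1}\dots c_{k_d}$ is exactly the operator in \eqref{eq=non_holomo_selfadj}. Moreover, as the new coefficients are concentrated on the single sign pattern $\varepsilon^0$, the block-matrix $M_l$ built from $(a_{(k,\varepsilon)})$ for the decomposition $(\N\times\{1,*\})^d=(\N\times\{1,*\})^l\times(\N\times\{1,*\})^{d-l}$ has the same non-zero entries, and hence the same $L_p$-norm, as the original $M_l$. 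Feeding these identifications into Theorem~\ref{thm=non_holomo_Rdiag} yields the bound $4^5\|c\|_p^2\|c\|_2^{d-2}(d+1)\max_{0\leq l\leq d}\|M_l\|_p$ for every $p\in 2\N\cup\{\infty\}$, which is exactly \eqref{eq=non_holomo_selfadj}; in particular the operator-norm case is included without any further passage to the limit.

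The substance of the argument is thus concentrated in verifying that the reduction is exact rather than in establishing any new estimate, and the main obstacle is the bookkeeping of the last paragraph: one must confirm that restricting the index set to the fixed alternating pattern $\varepsilon^0$ alters neither the norms $\|M_l\|_p$, $\|c\|_p$, $\|c\|_2$ entering the bound, nor the constant $4^5$ and the linear factor $(d+1)$. Should a direct appeal to Theorem~\ref{thm=non_holomo_Rdiag} prove awkward, the fall-back is to repeat its proof verbatim: one expands $\left\|\cdot\right\|_{2m}^{2m}$ as the trace of $(AA^*)^m$, applies the moment-cumulant formula, uses freeness to make the blocks monochromatic and the symmetry of $\mu$ to force all blocks to have even size, and then controls the resulting sum by Lemma~\ref{lemma=domination_of_cumulants_with_many_pairs} and the operator Cauchy--Schwarz inequality of Corollary~\ref{corollary=operator_Cauchy-Schwarz_inequality}, the $(d+1)$ growth arising from the same combinatorial count as in the non-holomorphic $\mathscr R$-diagonal case.
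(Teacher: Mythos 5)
Your reduction fails at its foundational claim: a nonzero self-adjoint element is \emph{never} $\mathscr R$-diagonal, whether or not its distribution is symmetric. The definition of $\mathscr R$-diagonality used in the paper (see part \ref{part=holomorphic_setting_proof}) requires that $\kappa_n[c^{\varepsilon_1},\dots,c^{\varepsilon_n}]$ vanish for \emph{every} pattern $\varepsilon$ that is not alternating of even length. Since $c^*=c$ as an operator, these cumulants do not depend on $\varepsilon$ at all; as non-alternating patterns exist for every $n\geq 2$ and $n=1$ is odd, $\mathscr R$-diagonality therefore forces \emph{all} free cumulants of $c$ to vanish, i.e.\ $c=0$ --- not merely the odd ones, as you assert. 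Concretely, under your hypotheses ($\mu$ symmetric, hence $\tau(c)=0$) one has $\kappa_2[c^1,c^1]=\kappa_2[c,c]=\tau(c^2)=\int t^2\,d\mu(t)>0$ unless $\mu=\delta_0$, and the pattern $(1,1)$ is not alternating. The standard semicircular element is the simplest counterexample to your claimed equivalence: it is self-adjoint with symmetric distribution, yet it is circular elements, not semicircular ones, that are $\mathscr R$-diagonal. What symmetry of $\mu$ really gives is only the vanishing of the odd cumulants; this restricts the moment--cumulant expansion to partitions with blocks of even size (the set $NC(d,m)$ of part \ref{part=NCdmsansstar}), but it does not produce the within-block alternation constraint on which the proof of Theorem \ref{thm=non_holomo_Rdiag} rests. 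Consequently Theorem \ref{thm=non_holomo_Rdiag} cannot be invoked for the family $(c_k)$, and the rest of your argument (the choice of the pattern $\varepsilon^0$, the identification of the matrices $M_l$), although correct in itself, has no theorem to feed into.

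Your fallback --- rerunning the proof of Theorem \ref{thm=non_holomo_Rdiag} --- is indeed how the paper proceeds, but as sketched it cites the wrong tool: Corollary \ref{corollary=operator_Cauchy-Schwarz_inequality} is stated, and only makes sense, for $\pi\in NC^*(d,m)$, its exponents being defined through the invariants $B$ of the restrictions of $\pi$ to the label sets $A_i$, whereas after using evenness the surviving partitions range over the larger set $NC(d,m)$. For those one instead iterates Lemma \ref{lemma=CauchySchwarz} along the symmetrization of Lemma \ref{lemma=symmetrization_forNCdm} and applies Lemma \ref{lemma=identifications}, which yields the cruder bound $|S(a,\pi,d,m)|\leq \max_{0\leq l\leq d}\|M_l\|_{2m}^{2m}$; the factor $(d+1)$ then comes from the cardinality estimate of Theorem \ref{thm=decomposition_of_NCn_m}, and the cumulants $\kappa_\pi[c,\dots,c]$ are controlled by the pair count in that theorem combined with Lemma \ref{lemma=domination_of_cumulants_with_many_pairs}. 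None of these steps are carried out in your proposal, so as written it does not establish the theorem.
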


For the case of the semicircular law and scalar coefficient $a_k$, this
result is not new. It is due to Bo{\.z}ejko \cite{MR1146011}, and was reproved
using combinatorial methods by Biane and Speicher, Theorem 5.3.4 of
\cite{MR1660906}. Our proof is a generalization of their proof and uses
it. Note also that the condition that $a_{k_1,\dots,k_d}=0$ if
$k_i=k_{i+1}$ for some $i$ is crucial to get \eqref{eq=non_holomo_selfadj}:
indeed if $a_{k_1,\dots,k_d}=0$ except for $a_{1,\dots,1}=1$ then we have
the equality $\|\sum_{k \in \N^d} a_k \otimes c_{k_1} \dots c_{k_d}\|_p =
\|c_1^d\|_p = \|c\|_{dp}^d$, whereas $\max_l \|M_l\|_p = 1$ and if $\mu$ is
not a Bernoulli measure $\|c\|_p^2 \|c\|_2^{d-2} (d+1) =o(\|c\|_{dp}^d)$
when $d \to \infty$. The inequality \eqref{eq=non_holomo_selfadj} thus does
not hold for this choice of $(a_k)$, even up to a constant.

These results are of some interest since they prove a new version of
Haagerup's inequality in a broader setting, but they are still
unsatisfactory since one would expect to be able to replace the term $(d+1)
\max_{0 \leq l \leq d} \|M_l\|$ by $\sum_{l=0}^d \|M_l\|$.

The paper is organized as follows: the first part only deals with
combinatorics of non-crossing partitions. In the second part we use the
results of the first part to get inequalities for the expressions
$S(a,\pi,d,m)$. In the third and last part we finally prove the main
results stated above.

Although some definitions are recalled, the reader will be assumed to be familiar with the basics of free probability theory and more precisely to its combinatorial aspect (non-crossing partitions, free cumulants, $\mathcal R$-diagonal operators...). For more on this see \cite{MR2266879}. For the vocabulary of non-commutative $L^p$ spaces nothing more than the definitions of the $p$-norm, the Cauchy-Schwarz inequality $|\tau(ab)| \leq \|a\|_2 \|b\|_2$ and the fact that $\|x\|_\infty = \lim_{p \to \infty} \|x\|_p$ will be used.

\section{Symmetrization of non-crossing partitions}
\label{section=symmetrization}
For any integer $n$, we denote by $[n]$ the interval $\{1,2,\dots, n\}$,
which we identify with $\Z/n\Z$ and which is endowed with the natural
cyclic order: for $k_1,\dots, k_p \in [n]$ we say that $k_1 < k_2 < \dots
<k_p$ for the cyclic order if there are integers $l_1,\dots l_p$ such that
$l_1< l_2 < \dots <l_p$, $k_i = l_i \mod n$ and $l_p-l_1 \leq n$. In other
words, if the elements of $[n]$ are represented on the vertices of a
regular polygon with $n$ vertices labelled by elements of $[n]$ as in
Figure \ref{picture=graphrep_of_partition}, then we say that $k_1 < k_2 <
\dots <k_p$ if the sequence $k_1,\dots k_p$ can be read on the vertices of
the regular polygon by following the circle clockwise for at most one full
circle.

If $\pi$ is a partition of $[n]$, and $i\in [n]$, the element of the
partition $\pi$ to which $i$ belongs is denoted by $\pi(i)$. We also write
$i \sim_\pi j$ if $i$ and $j$ belong to the same block of the partition
$\pi$.

If the elements of $[n]$ are represented on the vertices of a regular
polygon with $n$ vertices, a partition $\pi$ of $[n]$ is then represented
on the regular polygon by drawing a path between $i$ and $j$ if $i \sim_\pi
j$. See Figure \ref{picture=graphrep_of_partition} for an example.
\begin{figure}[!ht]
  \center
  \includegraphics{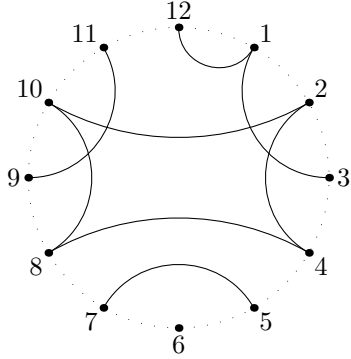}
  \caption{A graphical representation of the partition
    $\left\{\{1,3,12\},\{2,4,8,10\},\{5,7\},\{6\},\{9,11\}\right\}$.}\label{picture=graphrep_of_partition}
\end{figure}
\subsection{Definitions and first observation}
\label{part=def_and_observations}
We introduce the operations $P_k$ on the set of partitions of an even
number $n=2N$. This definition is motivated by Lemma
\ref{lemma=CauchySchwarz}.

\begin{dfn}\label{def=intervals_symmetries}
  Let $k \in [2N]$, and $I_k$ the subinterval of $[2N]$ of length $N$ and
  ending with $k$, $I_k=\{k-N+1,k-N+2, \dots ,k\}$ and $s_k^{(N)}$ (or
  simply $s_k$ when no confusion is possible) the symmetry $s_k(i)=2k+1-i$
  (note that $s_k$ is an involution of $[2N]$ that exchanges $I_k$ and
  $[2N] \setminus I_k$). For a partition $\pi$ of $[2N]$, $s_k(\pi)$ is the
  symmetric of $\pi$: $A \in s_k(\pi)$ if $s_k^{-1}(A) = s_k(A) \in
  \pi$. In other words $i \sim_{s_k(\pi)} j$ if and only if $s_k(i)
  \sim_\pi s_k(j)$.

\begin{figure}
  \center
  \includegraphics{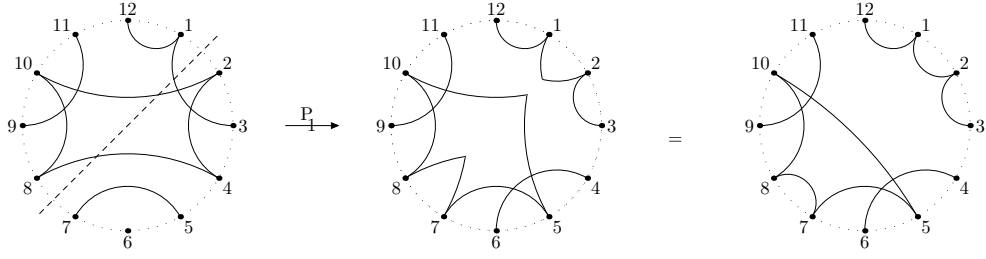}
  \caption{The operation $P_1$ on the partition
    $\left\{\{1,3,12\},\{2,4,8,10\},\{5,7\},\{6\},\{9,11\}\right\}$.}\label{picture=graphrep_of_P_1}
\end{figure}

For any partition $\pi$ of $[2N]$, we denote by $P_k(\pi)$ the partition of
$[2N]$ that we view as a symmetrization of $\pi$ around $k$, and which is
formally defined by the following: if one denotes $\pi'=P_k(\pi)$, then
\begin{eqnarray}
  \textrm{for } i,j\in I_k & i \sim_{\pi'} j \textrm{ if and only if } i
  \sim_\pi j\\
  \textrm{for } i,j\in [2N]\setminus I_k & i \sim_{\pi'} j \textrm{ if and
    only if } s_k(i)
  \sim_\pi s_k(j)\\
  \textrm{for } i \in I_k, j\notin I_k & i \sim_{\pi'} j \textrm{ if and only
    if } i
  \sim_\pi s_k(j) \textrm{and } \exists l\notin I_k, i \sim_\pi l.
\end{eqnarray}
\end{dfn}
It is straightforward to check that this indeed defines a partition of
$[2N]$, and that it is symmetric with respect to $k$, that is
$s_k(\pi')=\pi'$.

The operation $P_k$ is perhaps more easily described graphically: represent
$\pi$ on a regular polygon as above, and draw the symmetry line going
through the middle of the segment $[k,k+1]$. A graphical representation of
$P_k(\pi)$ is then obtained by erasing all the half-polygon not containing
$k$ and replacing it by the mirror-image of the half-polygon containing
$k$. See Figure \ref{picture=graphrep_of_P_1} for an example.

The following lemma expresses the fact that applying sufficiently many
times appropriate operators $P_k$, one can make a partition symmetric
with respect to all the $s_k$'s. See Figure
\ref{picture=symmetrization_process} to see an example of this
symmetrization process.
\begin{lemma}\label{lemma=symmetrization}
  Let $m$ be a positive integer.

  Let $k \in \N$ such that $2^{k} \geq m$. Then for any partition $\pi$ of
  $[2m]$, the partition $\pi_k = P_{2^k} P_{2^{k-1}} \dots P_{2} P_{1}
  P_{m} (\pi)$ is one of the four following partitions:
\begin{eqnarray*}
  \pi_k = 0_{2m} & = &\left\{\{j\},j\in [2m]\right\}\\
  \pi_k = c_m &= &\left\{\{2j;2j+1\},j\in [m]\right\}\\
  \pi_k = r_m &= &\left\{\{2j-1;2j\},j\in [m]\right\}\\
  \pi_k = 1_{2m} &= &\{[2m]\}
\end{eqnarray*} 

\end{lemma}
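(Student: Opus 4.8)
The goal is to show that iterating the operators $P_k$ along the specific schedule $P_{2^k}\cdots P_2 P_1 P_m$ collapses any partition $\pi$ of $[2m]$ into one of the four fully symmetric partitions $0_{2m}, c_m, r_m, 1_{2m}$. My plan is to track how the symmetries accumulate and to exploit the fact that each $P_k(\pi)$ is, by the remark following Definition \ref{def=intervals_symmetries}, symmetric with respect to $s_k$. The key invariant to monitor is the group of symmetries that the partition respects: once a partition is fixed by a symmetry $s_k$, I want to argue that subsequent applications of the $P_j$'s preserve or enlarge this symmetry group rather than destroy it, so that the symmetries compound.

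\emph{First step.} I would unwind the graphical description: $P_k$ keeps the half-polygon containing $k$ (on the arc $I_k$) and reflects it across the line bisecting the segment $[k,k+1]$. So $P_k(\pi)$ depends only on the restriction of $\pi$ to $I_k$. This means that after applying $P_k$, the partition is determined by $N=m$ consecutive points and is forced to be $s_k$-symmetric. The crucial observation I would establish is a \emph{commutation/stability} lemma: if $\pi$ is already symmetric under $s_j$, then applying $P_k$ for a suitable $k$ produces a partition symmetric under \emph{both} $s_k$ and $s_j$ (or under the composite reflection). Concretely, the reflections $s_1, s_2, \dots$ generate, together with the rotation coming from the cyclic identification $[2m]\simeq \Z/2m\Z$, a dihedral-type action, and two reflections whose axes are close generate a small rotation; I want to show the schedule forces the partition to be invariant under enough reflections that their products give all rotations by $2$, pinning it down to the translation-invariant partitions.

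\emph{Main induction.} I would argue by induction on the number of $P$-operations applied, maintaining as the inductive hypothesis that after applying $P_{2^j}\cdots P_1 P_m$ the partition is invariant under all $s_{2^i}$ for $i\le j$ and under $s_1,\dots$ as generated. The doubling schedule $1,2,4,\dots,2^k$ with $2^k\ge m$ is chosen precisely so that the accumulated reflection axes are spaced to generate, modulo $2m$, every shift by $2$; invariance under shift-by-$2$ together with the reflection symmetry forces each pair $\{2j-1,2j\}$ (or $\{2j,2j+1\}$) to behave identically, leaving only the four listed possibilities according to whether singletons stay separated, adjacent pairs merge in the even or odd phase, or everything merges into a single block. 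The final step checks that these four, and only these four, are simultaneously fixed by all the $s_{2^i}$ and by $s_1, s_m$.

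\emph{The main obstacle.} The hard part will be the bookkeeping of \emph{which} symmetries survive under composition: $P_k$ symmetrizes around $k$ but in doing so it \emph{discards} the data outside $I_k$, so a naive claim that "old symmetries are preserved" is false—an earlier $s_j$-symmetry could be lost because $P_k$ throws away half the polygon. I expect the genuine content to be showing that the \emph{geometrically relevant} symmetry is retained after reflection: because $P_k$ replaces the discarded half by the mirror image of the retained half, a symmetry $s_j$ of the retained half is reincarnated as a symmetry $s_{j'}$ of the output for a predictable $j'$. Making this transformation law for the symmetry axes precise, and verifying that the doubling schedule drives these axes to a configuration generating shift-by-$2$ invariance, is where the real work lies; the reduction to the four explicit partitions is then a short finite check.
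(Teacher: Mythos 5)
Your proposal has two genuine gaps, and the second is fatal to the strategy as a whole. First, the inductive hypothesis you want to maintain --- that after applying $P_{2^j}\cdots P_1 P_m$ the partition is invariant under all the previously used reflections --- is false, and not only for the ``naive'' reason you flag. Take $m=4$ and $\pi=\left\{\{1,5\},\{2,6\},\{3,7\},\{4,8\}\right\}$. Then $P_4(\pi)=\left\{\{1,8\},\{2,7\},\{3,6\},\{4,5\}\right\}$ is $s_4$-symmetric, but $P_1P_4(\pi)=\left\{\{1,8\},\{2,3\},\{4,7\},\{5,6\}\right\}$ is not: $s_4(\{4,7\})=\{2,5\}$ is not a block. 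Worse, the only reflection fixing $P_1P_4(\pi)$ is $s_1$ itself (recall $s_{k+m}=s_k$ as maps of $[2m]$, so there are only $m$ distinct reflections; here $s_2,s_3,s_4$ all fail), so the lost symmetry is not ``reincarnated'' as some $s_{j'}$ either --- it is simply destroyed. The symmetries of the four limit partitions emerge only at the very end of the process; they do not accumulate along it, so no induction on a growing group of symmetries can run. (For the record, the schedule does terminate correctly in this example: $P_2P_1P_4(\pi)=c_4$, in accordance with the lemma.)

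Second, even granting invariance under all the $s_j$ (hence under all rotations by $2$, since $s_{j+1}s_j$ is such a rotation), your final ``short finite check'' is wrong: the four partitions are \emph{not} characterized by dihedral invariance. For $m\geq 2$, the antipodal pairing $\left\{\{i,i+m\}: i\in[m]\right\}$ and the two-block partition $\{\mathrm{odds},\mathrm{evens}\}$ are fixed by every reflection $s_j$ (each $s_j$ permutes their blocks) and by every rotation by $2$, yet neither is among $0_{2m}$, $c_m$, $r_m$, $1_{2m}$. What rules such partitions out is a structural property of the image of $P_k$ that invariance alone does not see: by the third clause of Definition \ref{def=intervals_symmetries}, any block of $P_k(\pi)$ meeting both $I_k$ and its complement has the form $C\cup s_k(C)$, hence is itself $s_k$-invariant --- and the individual blocks of the two counterexamples above are not. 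The paper's proof avoids symmetry bookkeeping entirely: it splits into four cases according to whether $A=I_m\cap\pi(1)\setminus\{1\}$ and $B=([2m]\setminus I_m)\cap\pi(1)$ are empty, observes that each case stamps an explicit local pattern near the element $1$ (singletons, pairs $\{2j,2j+1\}$, pairs $\{2j-1,2j\}$, or one growing block), and shows by induction that each $P_{2^j}$ in the schedule doubles the region on which this pattern holds, so the pattern fills $[2m]$ once $2^{k+1}\geq 2m$. If you want to salvage your outline, the invariant to propagate is that local pattern (equivalently, the structural image property above), not the symmetry group.
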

\begin{figure}
  \center
  \includegraphics{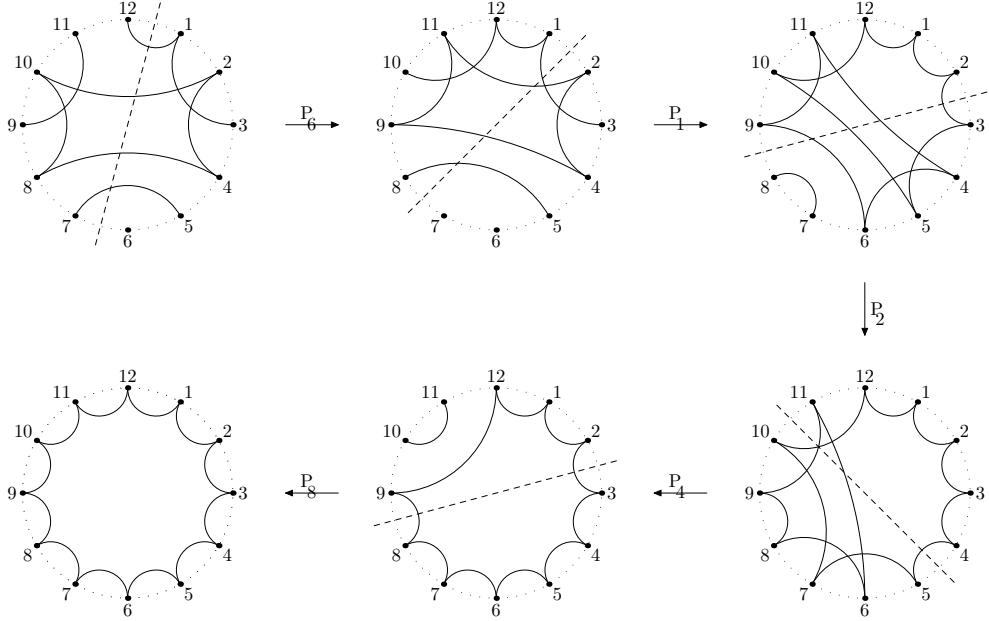}
  \caption{The symmetrization process starting from the partition
    $\left\{\{1,3,12\},\{2,4,8,10\},\{5,7\},\{6\},\{9,11\}\right\}$.}\label{picture=symmetrization_process}
\end{figure}
\begin{proof}
  Let $A= I_m \cap \pi(1) \setminus \{1\}$ and $B=([2m]\setminus I_m) \cap
  \pi(1) $. The four cases correspond respectively to the four following
  cases:
\begin{enumerate}
\item $A=B=\emptyset$.
\item $A=\emptyset$ and $B \neq \emptyset$.
\item $A \neq \emptyset$ and $B = \emptyset$.
\item $A \neq \emptyset$ and $B \neq \emptyset$.
\end{enumerate}

\begin{figure}
  \center
  \includegraphics{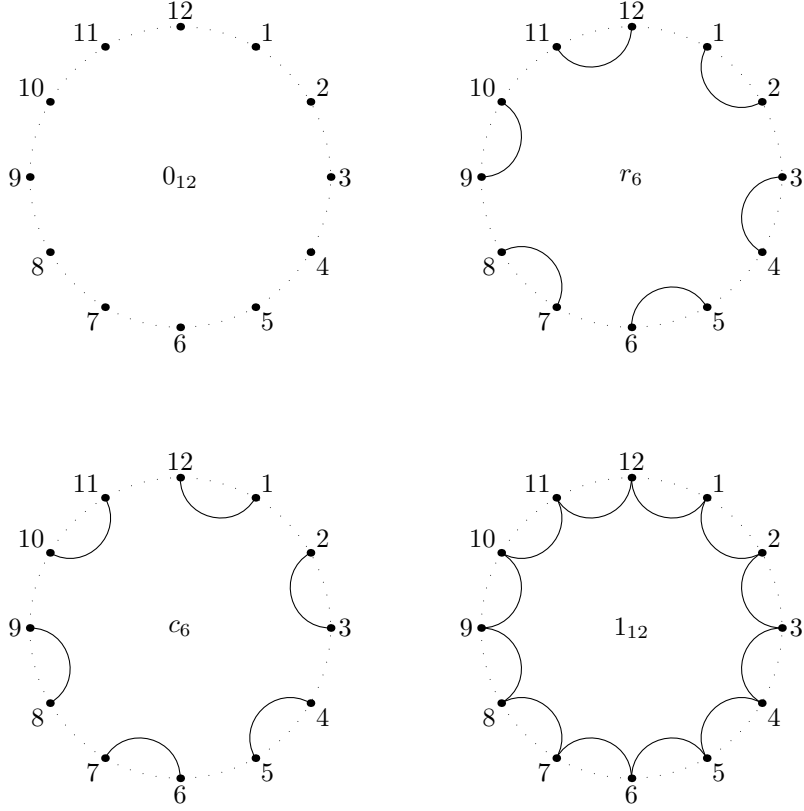}
  \caption{The partitions $0_{12}$, $r_6$, $c_6$ and $1_{12}$.}\label{picture=the_symmetric_partitions}
\end{figure}

In the first case, it is straightforward to prove by induction on $k$ that
$\pi_k$ includes the blocks $\{i\}$ for any $i\in \{1,\dots, 2^{k+1}\}$.

If $A=\emptyset$ and $B \neq \emptyset$, then $P_m(\pi)$ includes the
block $\{0,1\}$ and this implies that $P_1P_m(\pi)$ includes the
blocks $\{0,1\}$ and $\{2,3\}$, which in turn implies that
$P_2P_1P_m(\pi)$ includes the blocks $\{0,1\}$,$\{2,3\}$ and
$\{4,5\}$... More generally $\pi_k$ includes the blocks
$\{0,1\},\{2,3\},\dots,\{2^{k+1},2^{k+1}+1\}$ (this can be proved by
induction). For $2^{k+1} \geq 2m$ this is exactly $\pi_k=c_m$. We
leave the details to the reader.

In the same way, in the third case it is easy to prove by induction on $k$
that $\pi_k$ includes the blocks $\{2l-1,2l\}$ for $l\in\{1,\dots,
2^{k}\}$.

The fourth case follows from a similar proof by induction that $\pi_k(1)$
contains $\{0,1,2,\dots, 2^{k+1}+1\}$. The details are not provided.
\end{proof}

Although $P_k(\pi)$ is defined for any partition $\pi$, we will be mainly
interested in the case when $\pi$ is a non-crossing partition, and more
precisely when $\pi \in NC^*(d,m)$.

\subsection{Study of $NC^*(d,m)$}
\label{susbection=study_of_NCstar_d_m}

We first recall the definition of a non-crossing partition. A partition
$\pi$ of $[N]$ is called non-crossing if for any distinct $i<j<k<l \in
[N]$, $i \sim_\pi k$ and $j \sim_\pi l$ implies $i \sim_\pi j$ (in this
definition either take for $<$ the usual order on $\{1,\dots ,N\}$ or the
cyclic order since it gives to the same notion). More intuitively $\pi$ is
non-crossing if and only if there is a graphical representation of $\pi$ (on a
regular polygon with $n$ vertices as explained in the beginning of section
\ref{section=symmetrization}) such that the paths lie inside the polygon
and only intersect (possibly) at the vertices of the regular polygon. For
example the partitions of Figures \ref{picture=graphrep_of_partition},
\ref{picture=graphrep_of_P_1} are crossing, whereas the partitions in
Figures \ref{picture=the_symmetric_partitions},
\ref{picture=example_of_NCdm}, \ref{picture=exemple_of_map_P} are all
non-crossing. The set of non-crossing partitions of $[N]$ is denoted by
$NC(N)$. The cardinality of $NC(N)$ is known to be equal to the Catalan
number $(2N)!/(N! (N+1)!)$ (see \cite{MR0309747}), but we will only use
that it is less that $4^{N-1}$.

Following \cite{MR2353703}, we introduce the subset $NC^*(d,m)$ of
$NC(2dm)$.

In the following, for a real number $x$ one denotes by $\lfloor{x}\rfloor$
the biggest integer smaller than or equal to $x$. 

Divide the set $[2dm]$ into $2m$ intervals $J_1 \dots J_{2m}$ of size $d$:
the first one is $J_1 = \{1,2\dots ,d\}$, and the $k$-th is $J_k =
\{(k-1)d+1,\dots ,k d\}$.

To each element of $[2dm]$ we assign a label in $\{1,\dots ,d\}$ in the
following way: in any interval $J_k$ of size $d$ as above, the elements are
labelled from $1$ to $d$ if $k$ is odd and from $d$ to $1$ if $k$ is
even. We shall denote by $A_k$ the set of elements labelled by $k$.

\begin{dfn}A non-crossing partition $\pi$ of $[2dm]$ belongs to $NC^*(d,m)$
  if each block of the partition has an even cardinality, and if within
  each block, two consecutive elements $i$ and $j$ belong to intervals of
  size $d$ of different parity. Formally, the last condition means that
  $\lfloor{(i-1)/d}\rfloor \neq \lfloor{(j-1)/d}\rfloor \mod 2$ or
  equivalently $k(i) \neq k(j) \mod 2$ when $i \in J_{k(i)}$ and $j \in
  J_{k(j)}$.
\end{dfn}

Here are some first elementary properties of $NC^*(d,m)$:

\begin{lemma}
\label{lemma=equivalent_def_of_NCstardm}
  If $d=1$, a non-crossing partition $\pi \in NC(2m)$ belongs to
  $NC^*(1,m)$ if and only if it has blocks of even cardinality.

  A non-crossing partition of $[2dm]$ is in $NC^*(d,m)$ if and only
if it has blocks of even cardinality and it connects only elements with the
same labels (\emph{i.e.} it is finer than the partition $\{A_1,\dots ,
A_d\}$).
\end{lemma}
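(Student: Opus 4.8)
The plan is to prove Lemma~\ref{lemma=equivalent_def_of_NCstardm} in two parts, handling the $d=1$ case first as a warm-up for the general equivalence. For $d=1$ the intervals $J_k$ are singletons, so the condition ``consecutive elements of a block lie in intervals of different parity'' is automatic once the blocks have even cardinality: any two elements sit in distinct intervals $J_k$, and I would check that when a block has even size, one can always cyclically order its elements so that consecutive ones alternate parity. Conversely, a partition in $NC^*(1,m)$ has even blocks by definition. So the first statement reduces to verifying that the alternation condition imposes nothing beyond evenness of block sizes when $d=1$; the key is that for a non-crossing block of even size the labels (parities of the $J_k$) necessarily alternate around the block.

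For the general statement I would show the two characterizations agree. The nontrivial direction is: if $\pi$ is non-crossing with even blocks and connects only elements of equal label, then consecutive elements of a block lie in intervals of different parity. Recall the labelling: within $J_k$ elements are labelled $1,\dots,d$ when $k$ is odd and $d,\dots,1$ when $k$ is even. The crucial observation is that this ``boustrophedon'' labelling is arranged precisely so that, for a fixed label $c$, the positions carrying label $c$ alternate in parity of their containing interval as one moves along $[2dm]$ — that is, consecutive occurrences of label $c$ sit in intervals $J_k$ of opposite parity. I would make this explicit: the element of $J_k$ with label $c$ is at position $(k-1)d+c$ if $k$ is odd and $(k-1)d+(d+1-c)$ if $k$ is even, and I would verify that these positions, listed in increasing order, belong to $J_1,J_2,J_3,\dots$ in turn, hence to intervals of alternating parity. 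Since a block with a fixed label is exactly a subset of these positions, its consecutive elements automatically land in intervals of different parity.

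For the reverse inclusion I need that any $\pi\in NC^*(d,m)$ connects only equal-label elements. Here I would use non-crossingness together with the alternation condition and the even-cardinality condition. The idea is that the ``different parity'' requirement on consecutive block elements, combined with non-crossing, forces each block to be supported on a single label class $A_c$: I would argue that a block straddling two label classes would, because labels increase then decrease as one crosses interval boundaries, either create a crossing with another block or violate the parity-alternation within the block. Concretely, I expect to track how the label changes as one passes from an element $i\in J_{k(i)}$ to the next block element $j\in J_{k(j)}$ and show that equality of parities is forbidden only when labels match, so the two conditions together pin down the label.

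The main obstacle will be the reverse inclusion, i.e.\ deducing the \emph{equal-label} property purely from non-crossingness plus evenness plus parity-alternation; the forward direction is essentially a direct computation with the labelling formula. The cleanest route is probably an induction on block structure using the non-crossing hypothesis: any non-crossing partition has a block containing two cyclically adjacent interval-endpoints, and I would peel off an innermost interval $\{i,i+1,\dots\}$ of a block, check that the parity and non-crossing constraints force its two ends to carry the same label, then remove this pair and induct. I would keep the cyclic order of $[2dm]$ in mind throughout, since the definition of $NC^*(d,m)$ and of non-crossing both use it, and the boustrophedon labelling interacts with the cyclic structure at the wrap-around between $J_{2m}$ and $J_1$.
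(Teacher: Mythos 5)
Your proof of the direction ``even blocks and finer than $\{A_1,\dots,A_d\}$ implies $\pi\in NC^*(d,m)$'' (the one you call the nontrivial direction) has a genuine gap: the step where you conclude that consecutive block elements ``automatically land in intervals of different parity'' is a non-sequitur. It is true that the successive occurrences of a fixed label $c$ (one per interval $J_k$) lie in intervals of alternating parity, but consecutive elements of a \emph{block} contained in $A_c$ are in general not consecutive occurrences of the label $c$: the block may skip some occurrences, and a subset of an alternating sequence need not alternate. For instance the label-$1$ elements of $J_1$ and $J_3$, namely the positions $1$ and $2d+1$, both lie in odd intervals, so a block having them as consecutive elements would violate the parity condition even though it is a subset of $A_1$. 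The whole content of this implication is precisely to show that the number of skipped occurrences, i.e. $\left|\{s+1,\dots,t-1\}\cap A_c\right|$ for $s,t$ consecutive elements of a block, is even; this is where non-crossingness and the evenness of \emph{all} blocks must enter. The paper does this by observing that $\{s+1,\dots,t-1\}\cap A_c$ is a union of blocks of $\pi$ (each of even size, each inside a single label class) and has exactly $k(t)-k(s)-1$ elements, whence $k(t)-k(s)$ is odd. The same oversight infects your $d=1$ warm-up: the claim ``for a non-crossing block of even size the parities necessarily alternate around the block'' is false for a single block --- $\{1,3\}$ is an even block of the non-crossing partition $\left\{\{1,3\},\{2\},\{4\}\right\}$ whose consecutive elements have equal parity; one needs all blocks to be even, and again the argument goes through the gap between consecutive elements being a union of even blocks.

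For the converse (every $\pi\in NC^*(d,m)$ connects only equal-label elements) you offer only an intention (``either create a crossing or violate the parity-alternation'') plus an induction that peels off an innermost block. Besides being unproved, that induction has a structural problem: for $d\geq 2$ an interval block of $\pi\in NC^*(d,m)$ is forced to be a pair $\{kd,kd+1\}$, and deleting it leaves the two intervals $J_k$ and $J_{k+1}$ with $d-1$ elements each, so the remaining partition no longer lives in any set of the form $NC^*(d,m')$ and your induction hypothesis does not apply as stated; you would need a more general statement about arbitrary interval decompositions with compatible labellings. The paper avoids all of this with a single counting argument used in both directions: every block of $\pi\in NC^*(d,m)$ contains as many elements in odd intervals as in even intervals (since interval-parity alternates along the block and the block is even), so for consecutive block elements $s\in J_{k(s)}$ (odd, label $l(s)$) and $t\in J_{k(t)}$ (even, label $l(t)$), counting the odd-interval and even-interval elements of the gap $\{s+1,\dots,t-1\}$ --- which is a union of blocks, hence balanced --- gives $d-l(s)+d\,(k(t)-k(s)-1)/2 = d-l(t)+d\,(k(t)-k(s)-1)/2$, i.e. $l(s)=l(t)$. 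You should rebuild both directions around this kind of count rather than around the alternation of the label occurrences or a peeling induction.
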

\begin{proof}
  The first statement is a particular case of the second statement, which
  we now prove. For any $i \in [2dm]$ denote by $k(i)$ the integer such
  that $i \in J_{k(i)}$: $k(i)=1+\lfloor{(i-1)/d}\rfloor$. Let $\pi \in
  NC^*(d,m)$. Then by the definition of $NC^*(d,m)$ every block of $\pi$
  contains as many elements $i$ such that $k(i)$ is odd than elements $i$
  such that $k(i)$ is even. We have to prove that if $s$ and $t$ are two
  consecutive elements of a block of $\pi$, then $s$ and $t$ have the same
  labellings. Assume for example that $s$ belongs to an odd interval,
  \emph{i.e.}  $k(s)$ is odd, and denote by $l(s)$ the label of $s$. Then
  $s = (k(s)-1)d + l(s)$. In the same way, $k(t)$ is then even and if
  $l(t)$ is the label of $t$, we have that $t = k(t)d + 1 - l(t)$. Hence
  the number of elements $i \in \{s+1, \dots , t-1\}$ such that $k(i) \big(
  = 1+ \lfloor{(i-1)/d}\rfloor\big)$ is odd is equal to $d-l(s) + d \cdot
  (k(t) -k(s)-1)/2$, and the number of elements $i$ such that $k(i)$ is
  even is equal to $d-l(t) + d \cdot (k(t) -k(s)-1)/2$. But since $\pi$ is
  non-crossing, the interval $\{s+1, \dots , t-1\}$ is a union of blocks of
  $\pi$ and therefore contains as many elements $i$ such that $k(i)$ is odd
  than elements $i$ such that $k(i)$ is even. This implies $l(s)=l(t)$. The
  proof is the same if $k(s)$ is even.

  Now assume that $\pi \in NC(dm)$ has blocks of even cardinality and that
  $\pi$ is finer than the partition $\{A_1,\dots , A_d\}$. Let $s$ and $t$
  be two consecutive elements of a block of $\pi$. Then there is $i$ such
  that $s,t \in A_i$. Since $\pi$ is non-crossing and $\pi$ is finer than
  $\{A_1,\dots , A_d\}$, the set $\{s+1, \dots , t-1\} \cap A_i$ is a union
  of blocks of $\pi$, and in particular it has an even cardinality. But
  $\{s+1, \dots , t-1\} \cap A_i$ is the set of elements labelled by $i$ in
  the union of the intervals $J_k$ for $k(s)<k<k(t)$ (for the cyclic
  order). Hence its cardinality is $k(t) - k(s) -1$. Hence $k(t)-k(s)$ is
  odd. Since $s$ and $t$ are arbitrary, this proves that $\pi \in NC^*(d,m)$.  
\end{proof}

\begin{figure}
  \center
  \includegraphics{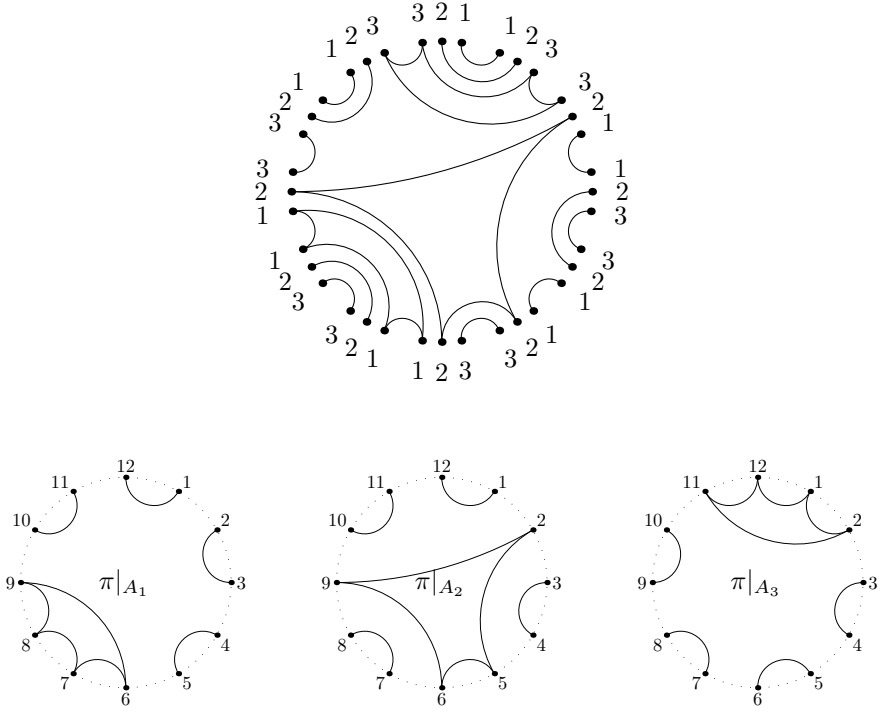}
  \caption{A graphical representation of a partition $\pi$ in $NC^*(3,6)$
    and the corresponding restrictions $\pi{\left|_{A_1}\right.}$,
    $\pi{\left|_{A_2}\right.}$ and  $\pi{\left|_{A_3}\right.}$.}\label{picture=example_of_NCdm}
\end{figure}

Thus to any $\pi \in NC^*(d,m)$ we can assign $d$ partitions
$\pi{\left|_{A_1}\right.},\dots ,\pi{\left|_{A_d}\right.}$, which are the
restrictions of $\pi$ to $A_1,\dots ,A_d$ respectively.  It is immediate
that for any $i\in\{1,\dots ,d\}$, $\pi{\left|_{A_i}\right.} \in NC^*(1,m)$.
See Figure \ref{picture=example_of_NCdm} for an example.
To study $NC^*(d,m)$, we thus begin with the study of $NC^*(1,m)$.

The first lemma shows that if $k$ is a multiple of $d$, then $P_k$ maps
$NC^*(d,m)$ into itself:
\begin{lemma}
\label{lemma=properties_of_symmetrization_forNCnm}
If $k \in \N$ and $\pi \in NC(2N)$ then $P_k(\pi) \in NC(2N)$.

If $k \in \N$ then for any $\pi \in NC^*(d,m)$, $P_{kd}(\pi) \in NC^*(d,m)$.

Moreover if $\pi \in NC^*(d,m)$, then for any $i\in\{1,\dots d\}$:
\[ P_{kd}(\pi){\left|_{A_i}\right.} = P_{k}(\pi{\left|_{A_i}\right.}).\]
\end{lemma}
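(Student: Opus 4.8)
The plan is to prove the three assertions in order. The first is a general fact about the symmetrization operator, while the last two rest on a single computation: when $k$ is a multiple of $d$, the reflection $s_{kd}$ preserves the label sets $A_1,\dots,A_d$. Before starting it is convenient to record the block structure of $\pi'=P_k(\pi)$ that follows directly from Definition \ref{def=intervals_symmetries}: writing $\widehat{p}=p$ for $p\in I_k$ and $\widehat{p}=s_k(p)$ for $p\notin I_k$ (so $\widehat{p}\in I_k$ always), each block of $\pi'$ maps under $p\mapsto\widehat{p}$ into a single block of $\pi$. Concretely, a block $B$ of $\pi$ with $B\subseteq I_k$ produces the two blocks $B$ and $s_k(B)$ of $\pi'$, whereas a block $B$ that meets both $I_k$ and its complement produces the single symmetric block $(B\cap I_k)\cup s_k(B\cap I_k)$. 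I would verify this description by reading off the three defining clauses of $P_k$, checking in particular that no spurious element joins a straddling block.

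For the first assertion, that $P_k$ preserves $NC(2N)$, I would argue graphically and then make it rigorous through the non-crossing condition. The chords of $\pi'$ lying entirely in $I_k$ coincide with those of $\pi\vert_{I_k}$, hence are non-crossing since a restriction of a non-crossing partition is non-crossing; their mirror images under $s_k$ are non-crossing as well, and lie in the complementary half-disk, so the two one-sided families cannot interfere. The only point requiring care is that the ``straddling'' blocks $(B\cap I_k)\cup s_k(B\cap I_k)$, whose chords cross the symmetry axis, neither cross each other nor cross a one-sided block. This follows because all straddling blocks reach the boundary of the arc $I_k$, so the nesting of their $I_k$-parts — inherited from the non-crossing of $\pi\vert_{I_k}$ — together with the reflection symmetry forces their axis-crossing chords to nest around the axis. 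I expect this gluing step to be the main obstacle, and I would settle it by a short case analysis of a putative crossing $a<b<c<e$ (cyclic), using that not all four points can lie in $I_k$ (by non-crossing of $\pi\vert_{I_k}$) and that $s_k$ reverses the cyclic order while preserving the crossing relation, which cuts the number of cases in half.

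For the second assertion I take $k$ a multiple of $d$, say the operator is $P_{kd}$ with $N=dm$. Since both $kd$ and $N=dm$ are multiples of $d$, the interval $I_{kd}$ is exactly the union $J_{k-m+1}\cup\dots\cup J_k$ of $m$ consecutive blocks $J_t$. A direct computation shows that $s_{kd}$ maps $J_t$ to $J_{2k-t+1}$, reversing the position within the interval; as $t$ and $2k-t+1$ have opposite parity, the label is preserved, i.e. $s_{kd}(A_i)=A_i$ for every $i$. Granting this, I conclude that $P_{kd}(\pi)$ is finer than $\{A_1,\dots,A_d\}$ by inspecting the three clauses of $P_{kd}$: in each clause the relation $i\sim_{\pi'}j$ forces $i$ and $j$ to have equal labels, using that $\pi$ already respects labels and that $s_{kd}$ preserves them. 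The even-block property is read off the block structure above: contained blocks $B\subseteq I_{kd}$ keep their size, which is even because $\pi\in NC^*(d,m)$, while straddling blocks acquire size $2\lvert B\cap I_{kd}\rvert$, automatically even. Combined with the first assertion (non-crossing), Lemma \ref{lemma=equivalent_def_of_NCstardm} then gives $P_{kd}(\pi)\in NC^*(d,m)$.

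For the third assertion I use the order-preserving identification of $A_i$ with $[2m]$ sending the element of $A_i$ in $J_t$ to $t$. Under this identification the computation of the previous paragraph shows $I_{kd}\cap A_i$ corresponds to $I_k=\{k-m+1,\dots,k\}\subseteq[2m]$ and $s_{kd}\vert_{A_i}$ corresponds to $s_k^{(m)}(t)=2k+1-t$; that is, the identification intertwines $s_{kd}$ with $s_k$ and $I_{kd}$ with $I_k$. Matching the three defining clauses of $P_{kd}(\pi)$ restricted to $A_i$ against those of $P_k(\pi\vert_{A_i})$ is then routine, except for the existential condition in the third clause, where the clause for $P_{kd}$ quantifies over all $l\notin I_{kd}$ while the clause for $P_k$ only sees $l\in A_i\setminus I_k$. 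This is precisely where the label-respecting property of $\pi\in NC^*(d,m)$ is essential: any $l$ with $i\sim_\pi l$ has the same label as $i$, so automatically $l\in A_i$, and the two existential conditions coincide. This reconciliation is the delicate point of the last part, and once it is in place the two partitions agree block by block.
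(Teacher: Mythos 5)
Your proposal is correct and follows essentially the same route as the paper's (sketched) proof: the paper likewise handles the second statement via Lemma \ref{lemma=equivalent_def_of_NCstardm} together with the key fact $s_{kd}(A_i)=A_i$, and the third via the fact that $s_{kd}^{(dm)}$ permutes the intervals $J_t$ according to $s_k^{(m)}$ while preserving each $A_i$. Your write-up merely supplies details the paper leaves to the reader — the block structure of $P_k(\pi)$, the case analysis for preservation of the non-crossing property, and the reconciliation of the existential clause in the third defining condition (which is exactly where fineness with respect to $\{A_1,\dots,A_d\}$ is needed) — and these all check out.
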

\begin{proof}[Sketch of Proof]
The first statement is obvious from the graphical point of view: if there
are no crossing, the symmetrization map will not produce one.

The second statement follows from the characterization of Lemma
\ref{lemma=equivalent_def_of_NCstardm}: it is not hard to check that if
$\pi$ has blocks of even cardinality then $P_{kd}(\pi)$ also has. The fact
that $P_{kd}(\pi)$ is finer that $\{A_1,\dots ,A_d\}$ if $\pi$ is follows
from the fact that $s_{kd}(A_i)=A_i$ for any $k$ and $1 \leq i \leq d$.

The third statement follows from the fact that $s_{kd}^{(dm)}$ is
characterized by the properties that for any $1 \leq j \leq 2m$,
$s_{kd}^{(dm)}(J_i) = J_{s_k^{(m)}(i)}$ and $s_{kd}^{(dm)}(A_i)=A_i$ for
$1 \leq i \leq d$.
\end{proof}

We have the following corollary of Lemma \ref{lemma=symmetrization}.
\begin{corollaire}
\label{corollary=symmetrization_for_n}
  Let $\pi \in NC^*(d,m)$. Then for $2^k \geq m$, the partition $\pi_k =
  P_{2^kd} P_{2^{k-1}d} \dots P_{2d} P_{d} P_{md} (\pi)$ is one of the
  $2d+1$ partitions $\sigma_l^{(d,m)}$ for $l=0,1,\dots , d$ and $\widetilde
  \sigma_l^{(d,m)}$ for $l=1,2,\dots ,d$ defined by:
  
\[\sigma_l^{(d,m)}{\left|_{A_i}\right.} = \left\{ 
\begin{array}{ll} c_m & \textrm{if } 1 \leq i\leq l\\
r_m & \textrm{if } l < i\leq d,
\end{array}\right.\]
\[\widetilde \sigma_l^{(d,m)}{\left|_{A_i}\right.} = \left\{ 
\begin{array}{ll} c_m & \textrm{if } 1 \leq i< l\\
1_{2m} & \textrm{if } i=l\\
r_m & \textrm{if } l < i\leq d.
\end{array}\right.\]

Moreover for any integer $i$, $P_{id}(\pi)=\pi$ when $\pi$ is one of the
partitions $\sigma_l^{(d,m)}$ for $l=0,1,\dots , d$ and $\widetilde
\sigma_l^{(d,m)}$ for $l=1,2,\dots ,d$.
\end{corollaire}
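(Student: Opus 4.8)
The plan is to reduce everything to the one-variable case $d=1$ already settled in Lemma~\ref{lemma=symmetrization}, by restricting to the sets $A_i$. The crucial tool is the third assertion of Lemma~\ref{lemma=properties_of_symmetrization_forNCnm}, which lets the symmetrizations commute with restriction; iterating it gives
\[
\pi_k|_{A_i}=P_{2^k}P_{2^{k-1}}\cdots P_2P_1P_m\,(\pi|_{A_i})
\]
for every $i\in\{1,\dots,d\}$. Since $\pi|_{A_i}\in NC^*(1,m)$, Lemma~\ref{lemma=symmetrization} (with $2^k\ge m$) shows that each $\pi_k|_{A_i}$ is one of $0_{2m},c_m,r_m,1_{2m}$. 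First I would rule out $0_{2m}$: by the second assertion of Lemma~\ref{lemma=properties_of_symmetrization_forNCnm} the partition $\pi_k$ stays in $NC^*(d,m)$, so each restriction lies in $NC^*(1,m)$ and in particular has only even blocks, whereas $0_{2m}$ consists of singletons. Hence $\pi_k|_{A_i}\in\{c_m,r_m,1_{2m}\}$ for every $i$.

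Next I would observe that $\pi_k$ is completely determined by the family of its restrictions: by Lemma~\ref{lemma=equivalent_def_of_NCstardm} a partition in $NC^*(d,m)$ is finer than $\{A_1,\dots,A_d\}$, so each of its blocks is contained in a single $A_i$ and $\pi_k$ is just the union of the blocks of the $\pi_k|_{A_i}$. Thus $\pi_k$ is encoded by a word in $\{c_m,r_m,1_{2m}\}^d$, and the whole statement reduces to deciding which of these $3^d$ words produce a \emph{non-crossing} partition of $[2dm]$.

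This last point is the heart of the proof. I would work in the polygon representation, recalling that inside an odd interval $J_k$ the labels increase ($1,\dots,d$) while inside an even interval they decrease ($d,\dots,1$); consequently the label-$i$ vertex of $J_k$ sits at position $(k-1)d+i$ or $kd+1-i$ according to the parity of $k$. From this one reads off that $r_m$ on $A_i$ joins the adjacent intervals $(J_{2j-1},J_{2j})$, that $c_m$ joins $(J_{2j},J_{2j+1})$, and that $1_{2m}$ links the label-$i$ vertex of \emph{all} intervals. A direct interleaving-of-endpoints computation then yields: (i) a $c_m$-label and an $r_m$-label avoid crossing exactly when the $c_m$-label is the smaller one, forcing the $c_m$-labels to form an initial segment $\{1,\dots,l\}$ and the $r_m$-labels the complementary final segment; (ii) two labels carrying $1_{2m}$ always cross, since their blocks wind around the whole polygon, so at most one label can be $1_{2m}$; and (iii) if a label $i$ carries $1_{2m}$ then non-crossing forces every $c_m$-label to be $<i$ and every $r_m$-label to be $>i$. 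Combining (i)--(iii), the admissible words are exactly $\sigma_l^{(d,m)}$ ($0\le l\le d$) and $\widetilde\sigma_l^{(d,m)}$ ($1\le l\le d$), which is the announced list of $2d+1$ partitions. The main obstacle is organizing the crossing analysis (i)--(iii) cleanly: in principle one must check all pairs of intervals, but the periodicity of the labelling and the $s_{kd}$-symmetry reduce this to the few representative interval pairs above.

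For the final assertion I would show that each of $c_m,r_m,1_{2m}$ is a fixed point of every $P_i$ on $[2m]$. For $1_{2m}$ this is immediate from Definition~\ref{def=intervals_symmetries}. For $c_m$ and $r_m$ one checks that the only block meeting the boundary of $I_i$ is the adjacent pair straddling the gap $\{i,i+1\}$ (present only for the appropriate parity of $i$), and that the cross-pair rule in Definition~\ref{def=intervals_symmetries} returns precisely this pair, so $P_i$ changes nothing. Granting this, the third assertion of Lemma~\ref{lemma=properties_of_symmetrization_forNCnm} gives $P_{id}(\sigma)|_{A_j}=P_i(\sigma|_{A_j})=\sigma|_{A_j}$ for each $j$, and since a partition of $NC^*(d,m)$ is determined by its restrictions, we conclude $P_{id}(\sigma)=\sigma$ for every $\sigma$ among the $\sigma_l^{(d,m)}$ and $\widetilde\sigma_l^{(d,m)}$.
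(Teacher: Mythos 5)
Your proposal is correct and follows essentially the same route as the paper: restrict to the label sets $A_i$ via Lemma \ref{lemma=properties_of_symmetrization_forNCnm}, apply Lemma \ref{lemma=symmetrization}, discard $0_{2m}$ because blocks must have even size, use the non-crossingness of $\pi_k$ to force the $c_m$/$1_{2m}$/$r_m$ pattern, and finally check that every $P_i$ fixes $c_m$, $r_m$ and $1_{2m}$ and lift this through the restriction lemma. The only difference is one of execution: where you run the full pairwise crossing analysis (i)--(iii), the paper extracts the same ordering constraint more quickly from the two single incidences $i\sim_{\pi_k} 2d+1-i$ and $j\sim_{\pi_k} 1-j$.
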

\begin{proof}
Let $k$ and $\pi$ as above. By Lemma
\ref{lemma=properties_of_symmetrization_forNCnm},
$\pi_k{\left|_{A_i}\right.} = P_{2^k} P_{2^{k-1}} \dots P_{2} P_{1}
P_{m}(\pi{\left|_{A_i}\right.})$, which is by Lemma
\ref{lemma=symmetrization} one of $0_{2m}$, $r_m$, $c_m$ and $1_{2m}$. But
since $0_{2m}$ does not have blocks of even sizes, only the three $r_m$,
$c_m$ and $1_{2m}$ are possible.

Let $1\leq i<j\leq d$. If $\pi_k{\left|_{A_i}\right.}=r_m$ or $1_{2m}$ then
in particular $i \sim_{\pi_k} 2d+1-i$. Since $\pi_k$ is non-crossing, $j
\nsim_{\pi_k} 1-j$, which implies that $\pi_k{\left|_{A_j}\right.}\neq
c_m,1_{2m}$. Thus $\pi_k{\left|_{A_j}\right.} = r_m$. In the same way if
$\pi_k{\left|_{A_j}\right.}=c_m$ or $1_{2m}$ then
$\pi_k{\left|_{A_i}\right.}=c_m$. This concludes the proof.

Similarly, the second claims follows from the fact (easy to verify) that
$P_i(\pi)=\pi$ for any $i \in [2m]$ when $\pi=1_{2m}$, $r_m$ or $c_m$.
\end{proof}

An important subset of $NC^*(d,m)$ is the subset $NC^*_2(d,m)$ of
partitions in $NC^*(d,m)$ with blocks of size $2$. As explained in part 3.1
of \cite{MR2353703}, $NC^*_2(d,m)$ is naturally in bijection with the
non-decreasing chains (for the natural lattice structure on $NC(m)$) of
length $d$ of non-crossing partitions of $[m]$. Let us denote by
$NC(m)^{(d)}$ this set of non-decreasing chains in $NC(m)$, for the order
of refinement, given by $\pi \leq \pi'$ if $\pi'$ is finer that $\pi$. The
bijective map $NC^*_2(d,m) \to NC(m)^{(d)}$ extends naturally to a (of
course non-bijective) map $NC^*(d,m) \to NC(m)^{(d)}$ which is of
interest. We now describe the construction of this map.

Let $\pi \in NC^*(1,m)$, that is a non-crossing partition of $[2m]$ with
blocks of even size.  Then $\Phi(\pi)$ is the partition of $[m]$ defined by
the fact that $\sim_{\Phi(\pi)}$ is the transitive closure of the relation
that relates $k$ and $l$ if $2k \sim_\pi 2l$ or $2k-1 \sim_\pi 2l$ or $2k
\sim_\pi 2l-1$ or $2k-1 \sim_\pi 2l-1$. That is $\Phi(\pi)$ is the
partition obtained by identifying the $2k-1$ and $2k$ in $[2m]$ to get $k
\in [m]$.

If $\pi\in NC^{*}(d,m)$, we define the map $\mathcal P$ by $\mathcal
P(\pi)=(\Phi(\pi{\left|_{A_1}\right.}),\dots
,\Phi(\pi{\left|_{A_d}\right.}))$. See Figure \ref{picture=exemple_of_map_P}.
\begin{figure}[!ht]
  \center
  \includegraphics{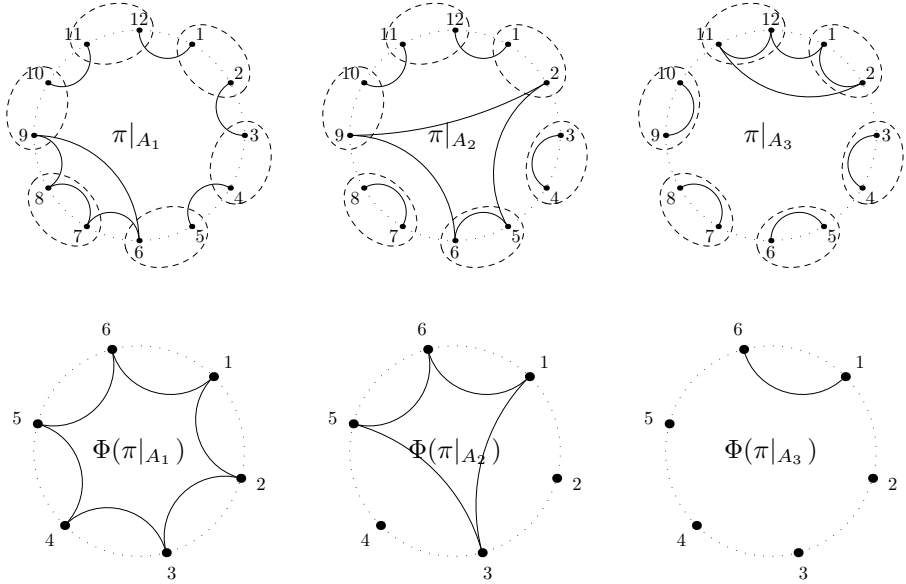}
  \caption{The map $\mathcal P$ for the partition $\pi \in NC^*(3,6)$ of
    Figure \ref{picture=example_of_NCdm}.}\label{picture=exemple_of_map_P}
\end{figure}

The map $\mathcal P$ is a good tool to make a finer study of
$NC^*(d,m)$. 

The main result in this section is that partitions in $NC^*(d,m)$ are not
far from belonging to $NC^*_2(d,m)$:
\begin{thm}
\label{thm=decomposition_of_NC*nm}
  For any $\sigma \in NC^*_2(d,m)$ there are less than $4^{2m}$ partitions
  $\pi\in NC^*(d,m)$ such that $\mathcal P(\pi) = \mathcal P(\sigma)$.

Moreover for such a $\pi$, the partition $\sigma$ is finer than $\pi$ and
the number of blocks of $\pi$ of size $2$ is greater than $dm-2m$, and
every block has size at most $2m$.
\end{thm}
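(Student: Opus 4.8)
The plan is to exploit that, by Lemma \ref{lemma=equivalent_def_of_NCstardm}, every partition in $NC^*(d,m)$ is finer than $\{A_1,\dots,A_d\}$, so each of $\pi$ and $\sigma$ splits as the disjoint union of its restrictions $\pi|_{A_i},\sigma|_{A_i}\in NC^*(1,m)$, and the hypothesis $\mathcal P(\pi)=\mathcal P(\sigma)$ reads $\Phi(\pi|_{A_i})=\Phi(\sigma|_{A_i})=:\rho_i$ for every $i$. Since there are $2m$ intervals and each carries exactly one element of each label, $|A_i|=2m$; as every block of $\pi$ sits inside some $A_i$, every block has size at most $2m$. This already gives the last assertion for free.

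First I would isolate the key per-index fact: for $\rho\in NC(m)$, among all $\tau\in NC^*(1,m)$ with $\Phi(\tau)=\rho$, the canonical pairing $\sigma|_{A_i}$ (the unique element of $NC^*_2(1,m)$ over $\rho$) is the finest, i.e.\ $\sigma|_{A_i}\le\tau$. Concretely $\Phi^{-1}(\rho)$ consists exactly of the coarsenings of this pairing obtained by merging, inside each block $V$ of $\rho$, some of the $|V|$ consecutive pairs winding around the doubled points of $V$, while staying non-crossing and keeping each $\Phi$-block connected. I would prove this from the explicit description of $\Phi$ (identify $2k-1$ with $2k$) together with its transitive-closure definition: any $\tau$ over $\rho$ can never connect doubled points lying in distinct blocks of $\rho$, whereas connectedness of each block forces every winding pair to lie in a single block of $\tau$. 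Applying this inside each $A_i$ and taking the disjoint union yields $\sigma\le\pi$, the first ``moreover'' assertion.

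With $\sigma\le\pi$ in hand, $\pi$ is recovered from $\sigma$ by prescribing a non-crossing grouping of the $dm$ pairs of $\sigma$, the size-$2$ blocks of $\pi$ being exactly the ungrouped pairs. The heart of the argument is to bound, uniformly in $d$, the number of pairs of $\sigma$ that actually get merged (those lying in a block of $\pi$ of size $\ge 4$): I would show this number is strictly less than $2m$. The mechanism is that a merge inside $A_i$ enlarges a block across a doubled point, and the global non-crossing condition on $[2dm]$ together with the alternating-parity condition defining $NC^*(d,m)$ confines such enlargements to the neighbourhoods of the $2m$ interval transitions $J_k\mid J_{k+1}$; charging each merged pair to a transition, the non-crossing condition prevents the transitions from being charged too often, so fewer than $2m$ pairs are merged. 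Granting this, the number of size-$2$ blocks exceeds $dm-2m$, which is the second ``moreover'' assertion.

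Finally, once at most $2m$ pairs are involved in the larger blocks and their positions are pinned down by $\mathcal P(\sigma)$ through the transition-charging, $\pi$ is determined by a non-crossing partition of those $\le 2m$ pairs; since $|NC(N)|\le 4^{N-1}$, there are fewer than $4^{2m}$ possibilities, giving the stated count. I expect the genuine obstacle to be exactly the $d$-independence of the merge count: proving that the non-crossing and parity constraints really localize all merging to the fixed $2m$ interval transitions, rather than letting it proliferate through the $d$ restrictions $\pi|_{A_i}$, and that the chain $\mathcal P(\sigma)=(\rho_1\le\dots\le\rho_d)$ leaves genuine freedom only at its (at most $m$) strict increases. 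The remaining ingredients—the decomposition over the $A_i$, the finest-fibre property of $\Phi$, and the Catalan bound $4^{N-1}$—are comparatively routine.
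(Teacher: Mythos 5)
Your outer structure matches the paper's: the splitting along the label classes $A_i$, the finest-fibre property of $\Phi$ (the paper's Lemma \ref{lemma=taille_du_quotient_pour_n=1}) giving $\sigma\le\pi$, the bound $|A_i|=2m$ giving the block-size bound, and a Catalan count at the end. The gap sits exactly at the step you yourself flag as the obstacle, and the mechanism you propose for it is not the right one. The claim that non-crossing plus the parity condition ``confines enlargements to the neighbourhoods of the $2m$ interval transitions'' is either vacuous or false: every pair of every $\sigma\in NC^*_2(d,m)$ straddles an interval transition (its endpoints lie in intervals of different parity), so straddling carries no information; and spatial proximity genuinely fails, as the partition $\widetilde\sigma_l^{(d,m)}$ of Corollary \ref{corollary=symmetrization_for_n} shows: for $1<l<d$ it lies in the fibre over $\sigma_l^{(d,m)}$ and its unique large block is all of $A_l$, obtained by merging $m$ pairs whose endpoints sit at distance roughly $\min(l,d-l)$ from the nearest transition. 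Nor can a charging argument ``each merged pair charged to a transition, each transition charged boundedly often'' be powered by non-crossing alone: in that same example $m$ distinct transitions are each charged once, and what forbids merging the many nested pairs over a \emph{single} transition (as in $\sigma_0^{(d,m)}$) is the label condition of Lemma \ref{lemma=equivalent_def_of_NCstardm}, not non-crossing. So the proposed mechanism is at best a restatement of the problem.

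What actually makes the count independent of $d$ — and what your last sentence gestures at without supplying — is a telescoping argument along the chain, not a localization argument. Writing $\rho_i=\Phi(\sigma|_{A_i})$ and $\rho_{d+1}=0_m$, the paper proves (Lemma \ref{lemma=a_subset_of_great_size}) a \emph{forcing} statement: a pair of $\sigma$ inside $A_i$ whose $\Phi$-endpoints $k_s,k_{s+1}$ are consecutive in a block of $\rho_i$ must be a block of every $\pi$ in the fibre as soon as $k_s\sim_{\rho_{i+1}}k_{s+1}$; the proof uses Lemma \ref{lemma=taille_du_quotient_pour_n=1} at level $i+1$ to produce a pair of $\pi$ at label $i+1$ which, by non-crossing, excludes every other partner for $2dk_s-i+1$. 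Then Lemma \ref{lemma=number_of_succesives_in_partitions} bounds the unforced pairs at level $i$ by $2\left(|\rho_{i+1}|-|\rho_i|\right)$, and summing over $i$ telescopes to $2\left(m-|\rho_1|\right)<2m$, uniformly in $d$. This forcing lemma also repairs a second soft spot in your counting step: the set of merged pairs depends on $\pi$, so to bound the fibre you need a set of possibly-unforced elements depending on $\sigma$ only; that is precisely the paper's set $A$ (a union of pairs of $\sigma$), after which the injection $\pi\mapsto\pi|_B$, with $B$ containing one point of each pair outside $A$ and $|B|\le 2m$, yields the $4^{2m}$ bound.
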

\begin{rem} The remarkable feature of $NC^*(d,m)$ illustrated in this
  Theorem is that the bounds we get on the number of $\pi\in NC^*(d,m)$
  such that $\mathcal P(\pi) = \mathcal P(\sigma)$ and on the elements of
  $[2dm]$ that do not belong to a block of size $2$ of $\pi \in NC^*(d,m)$
  \emph{do not depend on d}.

  In particular since the cardinality of $NC^*_2(d,m)$ is equal to the
  Fuss-Catalan number $1/m \binom{m(d+1)}{m-1}$ which is less that
  $e^m(d+1)^m$ (Corollary 3.2 in \cite{MR2353703}) the first statement of
  the Theorem implies that the cardinality of $NC^*(d,m)$ is less that
  $\big(16 e (d+1)\big)^m$.
\end{rem}

This Theorem will follow from a series of lemmas. Here is the first one,
which treats the case $d=1$:
\begin{lemma}
\label{lemma=taille_du_quotient_pour_n=1}
Let $\sigma \in NC^*_2(1,m)$ and $\pi \in NC^*(1,m)$ such that $\Phi(\pi) =
\Phi(\sigma)$. Then $\sigma$ is finer than $\pi$.

More precisely if $\pi \in NC^*(1,m)$ and if $\{k_1 < k_2 \dots < k_p\}$ is
a block of $\Phi(\pi)$, then for any $i$, $2k_{i} \sim_\pi 2k_{i+1} -1$
(with the convention $k_{p+1} = k_1$).
\end{lemma}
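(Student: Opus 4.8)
The plan is to prove the ``more precise'' statement first and then read off the first assertion from it. Fix a block $B=\{k_1<\dots<k_p\}$ of $\Phi(\pi)$ (in the cyclic order on $[m]$) and consider its preimage $\widetilde B=\bigcup_{j}\{2k_j-1,2k_j\}\subset[2m]$ under the identification $2k-1,2k\mapsto k$ defining $\Phi$; its elements, in cyclic order, are $2k_1-1<2k_1<2k_2-1<2k_2<\dots<2k_p-1<2k_p$. The first thing I would record is that $\widetilde B$ is a union of blocks of $\pi$: if $x\in\widetilde B$, say $x\in\{2k_j-1,2k_j\}$, and $y\sim_\pi x$ with $y\in\{2l-1,2l\}$, then the definition of $\Phi$ forces $l\sim_{\Phi(\pi)}k_j$, hence $l\in B$ and $y\in\widetilde B$. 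In particular $\pi(2k_i)\subset\widetilde B$ for every $i$, so it suffices to analyse $\pi$ on $\widetilde B$.

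The local heart of the argument is the claim that, for each $i$, the cyclic successor $w$ of $2k_i$ inside its own block $\pi(2k_i)$ is exactly $2k_{i+1}-1$. Such a $w\neq 2k_i$ exists because blocks are even, hence of size $\geq2$. Since $\pi$ is non-crossing and the cyclic interval $(2k_i,w)$ meets no element of $\pi(2k_i)$, that interval is a union of whole blocks of $\pi$ (the same non-crossing fact already used in Lemma \ref{lemma=equivalent_def_of_NCstardm}); as all blocks are even, $|(2k_i,w)|$ is even, and since $2k_i$ is even this forces $w$ to be \emph{odd}. Consequently $(2k_i,w)=\{2k_i+1,\dots,w-1\}$ runs from the odd number $2k_i+1$ to the even number $w-1$, so it is also a union of the gluing pairs $\{2l-1,2l\}$.

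The key step, and where I expect the real obstacle to lie, is to rule out that $w$ overshoots $2k_{i+1}-1$ because of a nested gap. Here I would use the hypothesis that $B$ is a \emph{single} block of $\Phi(\pi)$: the interval $(2k_i,w)$ is simultaneously saturated for $\sim_\pi$ and for the pairing $2l-1\leftrightarrow 2l$, hence it is a union of the $[2m]$-incarnations $\widetilde C$ of blocks $C$ of $\Phi(\pi)$. Since it does not contain $2k_i\in\widetilde B$ it cannot contain all of $\widetilde B$, and therefore it is disjoint from $\widetilde B$ altogether. But $w\in\pi(2k_i)\subset\widetilde B$, so $w$ is the first element of $\widetilde B$ strictly after $2k_i$, which by the explicit listing above is $2k_{i+1}-1$. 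This yields $2k_i\sim_\pi 2k_{i+1}-1$ and proves the precise statement. The parity alternation (successor of an even element is odd) is automatic from even blocks plus non-crossingness; it is precisely this last disjointness argument, ruling out the ``nested gap'', that genuinely needs the connectedness of $B$ — indeed such a gap is exactly what would otherwise split $B$ into two $\Phi(\pi)$-blocks.

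Finally, to conclude that $\sigma$ is finer than $\pi$: since $\sigma\in NC^*_2(1,m)$ also lies in $NC^*(1,m)$ and satisfies $\Phi(\sigma)=\Phi(\pi)$, the precise statement applied to $\sigma$ gives $2k_i\sim_\sigma 2k_{i+1}-1$, and as $\sigma$ is a pairing this means $\{2k_i,2k_{i+1}-1\}$ is an entire block of $\sigma$. As $i$ and $B$ vary, these pairs cover each $\widetilde B$ exactly once and hence exhaust $[2m]$, so they are precisely the blocks of $\sigma$; and each of them is contained in the block $\pi(2k_i)$ by the precise statement applied to $\pi$. Thus every block of $\sigma$ is contained in a block of $\pi$, i.e.\ $\sigma$ is finer than $\pi$.
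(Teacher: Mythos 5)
Your proof is correct, and while it shares the same skeleton as the paper's argument --- both hinge on showing that the first element of $\pi(2k_i)$ after $2k_i$ (cyclically) in its own block is precisely $2k_{i+1}-1$ --- the two key sub-steps are carried out by a different mechanism. The paper takes the parity alternation inside blocks directly from the definition of $NC^*(1,m)$, and rules out overshooting (successor $2p-1$ with $l<p$) by contradiction: since $l\sim_{\Phi(\pi)}k$, some $\pi$-edge must leave the interval $\{2k+1,\dots,2p-2\}$, producing a pair that crosses $(2k,2p-1)$. You instead invoke non-crossingness exactly once --- the gap $(2k_i,w)$ between consecutive elements of a block is a union of whole blocks of $\pi$ --- and then deduce both the oddness of $w$ (from even block sizes) and the non-overshooting by a purely set-theoretic saturation argument: the gap is saturated under $\sim_\pi$ and under the gluing pairs $\{2l-1,2l\}$, hence is a union of fibers $\widetilde C$ of $\Phi$, hence is disjoint from $\widetilde B$ because it misses $2k_i$. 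This quotient-style argument is a genuine, if modest, variant of the paper's crossing contradiction; it treats singleton blocks of $\Phi(\pi)$ uniformly rather than as a separate ``clear'' case, and you also spell out the deduction of the first statement from the second (the pairs $\{2k_i,2k_{i+1}-1\}$ exhaust the blocks of $\sigma$ and each sits inside $\pi(2k_i)$), which the paper dismisses as easy. What the paper's version buys is concreteness --- one sees exactly which crossing would be created; what yours buys is economy in the use of non-crossingness and a cleaner logical structure that would transfer more readily to variants of the statement.
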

\begin{proof}
  The first statement follows easily from the second one. We thus focus on
  the second statement. At least as far as partitions in $NC^*_2(1,m)$ are
  concerned, this is explained in the discussion preceding Corollary 3.2 in
  \cite{MR2353703}. The proof is the same for a general $\pi \in
  NC^*(1,m)$, but for completeness we still provide a proof.

  It is clear that $\Phi(\pi)(k) = \{k\}$ implies that $2k \sim_\pi
  2k-1$. Thus to prove the statement we have to prove that if $k$ and $l$
  are consecutive and distinct elements of a block of $\Phi(\pi)$ then $2k
  \sim_{\pi} 2l-1$.

  The first element in $\pi(2k)$ after $2k$ is odd, that is of the form
  $2p-1$, because $2k$ is even and the parity alternates in blocks of
  $\pi$. We claim that $p=l$. Note that we necessarily have $k < l \leq
  p$ (again for the cyclic order) because $k \sim_{\Phi(\pi)} p$.  Suppose
  that $k < l <p$. We get to a contradiction: indeed since $l
  \sim_{\Phi(\pi)} k$ and $\{2l-1,2l\} \subset \{2k+1,2k+2\dots,2p-2\}$
  there is at least one $j \in \{2k+1,2k+2\dots,2p-2\}$ and $i \in
  \{2p-1,2p\dots 2k\}$ such that $i \sim_\pi j$. But by definition of $p$,
  $j \nsim_\pi 2k$ and $j \nsim_\pi 2p-1$. This contradicts the fact that
  $\pi$ is non-crossing.
\end{proof}

We can now check that $\mathcal P$ is well-defined:
\begin{lemma}
\label{lemma=P_well_defined}
The map $\mathcal P$ from $NC^*(d,m)$ takes values in $NC(m)^{(d)}$.
\end{lemma}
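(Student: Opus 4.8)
The plan is to check two separate things, since membership in $NC(m)^{(d)}$ means that each coordinate is a genuine non-crossing partition of $[m]$ \emph{and} that the resulting $d$-tuple is a non-decreasing chain for the refinement order. The first point is the routine one. By Lemma \ref{lemma=equivalent_def_of_NCstardm} each restriction $\pi{\left|_{A_i}\right.}$ lies in $NC^*(1,m)$, so after identifying $A_i$ with $[2m]$ via the cyclic order it is a non-crossing partition of $[2m]$ with blocks of even size. The map $\Phi$ only contracts each adjacent pair $\{2k-1,2k\}$ to a single vertex $k$, an operation which cannot create a crossing; hence each $\Phi(\pi{\left|_{A_i}\right.})$ is a non-crossing partition of $[m]$, and $\mathcal P(\pi)\in NC(m)^d$. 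I would either spell this contraction argument out directly or record it as a preliminary observation that $\Phi$ maps $NC^*(1,m)$ into $NC(m)$.

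The substance is the chain condition, which I would phrase as: for $i<j$ one has $\Phi(\pi{\left|_{A_i}\right.})\leq \Phi(\pi{\left|_{A_j}\right.})$, i.e. the higher-labelled coordinate is \emph{finer}. Using the transitive-closure definition of $\Phi$ it suffices to prove the propagation statement: if two super-indices $p,q\in[m]$ are directly related in $\Phi(\pi{\left|_{A_j}\right.})$ then they lie in a common block of $\Phi(\pi{\left|_{A_i}\right.})$. The geometric input is the nesting built into the labelling: within an odd interval $J_k$ the labels increase from $1$ to $d$, within an even interval they decrease from $d$ to $1$, so in each super-interval $\{J_{2p-1},J_{2p}\}$ the two vertices of label $j$ lie \emph{strictly between} the two vertices of label $i$ whenever $i<j$. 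This is exactly why connections at the inner label $j$ should force connections at the outer label $i$.

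To run the argument I would apply Lemma \ref{lemma=taille_du_quotient_pour_n=1} to $\pi{\left|_{A_j}\right.}$: a direct relation between $p$ and $q$ produces a $\pi$-arc $u\sim_\pi v$ with $u,v$ consecutive in their block, $u$ the label-$j$ vertex in the even interval $J_{2p}$ and $v$ the label-$j$ vertex in the odd interval $J_{2q-1}$. The nesting places the neighbouring label-$i$ vertices $u'$ and $v'$ strictly inside the arc $(u,v)$, and since $u,v$ are consecutive in their block the interval $(u,v)$ is a union of complete $\pi$-blocks. Consequently its label-$i$ vertices form complete $\pi{\left|_{A_i}\right.}$-blocks and consist of \emph{both} vertices of each intermediate super-index $p+1,\dots,q-1$ together with exactly the single vertices $u'$ and $v'$ of supers $p$ and $q$; a parity/threading count in the spirit of the proof of Lemma \ref{lemma=taille_du_quotient_pour_n=1} (using that $u'$ comes from an even interval and $v'$ from an odd one) then shows that following these blocks links $u'$ to $v'$, whence $p\sim_{\Phi(\pi{\left|_{A_i}\right.})}q$. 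The main obstacle is precisely this last bookkeeping: verifying that the complete-block structure inside $(u,v)$ threads super $p$ to super $q$ for label $i$ (possibly through the intermediate supers, not by a single arc), which is where the non-crossing hypothesis and the alternation of parities must be combined carefully. I would reduce to consecutive labels $j=i+1$ if the general nesting estimate $u<u'<v'<v$ makes the indexing awkward.
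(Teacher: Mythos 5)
Your strategy is viable and genuinely different from the paper's, and most of your intermediate geometry is correct: each $\Phi(\pi|_{A_i})$ is indeed non-crossing (the paper takes this for granted; your contraction argument is the standard justification), the chain condition does reduce to a propagation statement, the nesting $u<u'<v'<v$ of the label-$i$ vertices inside the label-$j$ arc is right, and since $u,v$ are consecutive in their $\pi$-block, non-crossingness does make the open interval $(u,v)$ a union of complete $\pi$-blocks, so that its label-$i$ vertices, which form the $A_i$-interval $\{2p,2p+1,\dots,2q-1\}$, are a union of complete $\pi|_{A_i}$-blocks. However, two things go wrong. The minor one is your claim that a \emph{direct} relation between $p$ and $q$ produces the arc between the label-$j$ vertices of $J_{2p}$ and $J_{2q-1}$: this is false. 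Take $d=1$, $m=3$ and $\sigma=\{\{1,2,3,6\},\{4,5\}\}\in NC^*(1,3)$; the super-indices $1$ and $3$ are directly related via $1\sim_\sigma 6$, yet $2\nsim_\sigma 5$. Lemma \ref{lemma=taille_du_quotient_pour_n=1} yields the arc $2p\sim 2q-1$ only when $p,q$ are \emph{consecutive} elements of a block of $\Phi$; the repair is harmless (consecutive pairs also generate each block), but as written the step is wrong.

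The genuine gap is the step you yourself flag as the main obstacle: showing that when the $A_i$-interval $I=\{2p,\dots,2q-1\}$ is a union of complete $\pi|_{A_i}$-blocks, one gets $p\sim_{\Phi(\pi|_{A_i})}q$. This is not a routine ``parity count''; it is the entire content of the lemma in your approach, and you do not supply it. It is true, but proving it requires an iteration of Lemma \ref{lemma=taille_du_quotient_pour_n=1} along the block of $\Phi(\pi|_{A_i})$ containing $p$: the successive elements $p<\kappa_2<\kappa_3<\cdots$ of that block stay in $\{p+1,\dots,q\}$ because each relevant $\pi|_{A_i}$-block lies in $I$; one must rule out that some $2\kappa_a$ is the last element of its block (a cyclic-order argument), so that the sequence strictly increases; and one must use that $2p-1\notin I$ to see that the sequence cannot wrap around and therefore must reach $q$. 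That argument is roughly as long as the paper's whole proof. The paper avoids the threading issue altogether by arguing by contradiction in the opposite direction: if a block of $\Phi(\pi|_{A_j})$ met and escaped a block $\{k_1<\dots<k_p\}$ of $\Phi(\pi|_{A_i})$, there would be a consecutive pair $k_s,l$ of the label-$j$ quotient block with $l$ outside, say $k_t<l<k_{t+1}$, and then two applications of Lemma \ref{lemma=taille_du_quotient_pour_n=1} produce a label-$i$ arc $2k_t\sim_{\pi|_{A_i}}2k_{t+1}-1$ and a label-$j$ arc $2k_s\sim_{\pi|_{A_j}}2l-1$ that cross inside $[2dm]$ --- no connectivity analysis inside the interval is needed. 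So: right skeleton and correct auxiliary observations, but the decisive step is asserted rather than proved, and the paper's contradiction route shows it can be bypassed entirely.
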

\begin{proof}
  Let $\pi \in NC^*(d,m)$; we have to prove that if $1\leq i<j\leq d$ then
  $\Phi(\pi{\left|_{A_j}\right.})$ is finer than
  $\Phi(\pi{\left|_{A_i}\right.})$.

Let $\{k_1 < k_2 \dots < k_p\}$ be a block of
$\Phi(\pi{\left|_{A_i}\right.})$. Suppose that
$\Phi(\pi{\left|_{A_j}\right.})(k_1) \nsubseteq \{k_1, k_2 \dots k_p\}$.
Then there exist $1 \leq s \leq p$ and $l \notin \{k_1, k_2 \dots k_p\}$
such that $k_s$ and $l$ are consecutive elements of
$\Phi(\pi{\left|_{A_j}\right.})(k_1)$ (for the cyclic order). If $1 \leq t
\leq p$ is such that $k_t < l <k_{t+1}$ (with again the convention
$k_{p+1}=k_1$), we have by Lemma \ref{lemma=taille_du_quotient_pour_n=1}
that $2k_t \sim_{\pi{\left|_{A_i}\right.}} 2k_{t+1}-1$ and $2k_s
\sim_{\pi{\left|_{A_j}\right.}} 2l-1$, which contradicts the fact
that $\pi$ is non-crossing. This shows that
$\Phi(\pi{\left|_{A_j}\right.})(k_1) \subseteq\{k_1, k_2 \dots k_p\} =
\Phi(\pi{\left|_{A_i}\right.})(k_1)$. Since $k_1$ was arbitrary, the proof
is complete.
\end{proof}

Here is a last elementary lemma concerning general non-crossing partitions:
\begin{lemma} 
\label{lemma=number_of_succesives_in_partitions}
Let $N \in \N$ and $\pi \in NC(N)$ with $\alpha$ blocks. Then the number of
$k \in [N]$ such that $k \sim_\pi k+1$ is greater or equal to
$N-2(\alpha-1)$.
\end{lemma}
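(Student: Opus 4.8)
The plan is to rephrase the statement in terms of ``cuts''. Since $[N]$ is identified with $\Z/N\Z$, there are exactly $N$ cyclically consecutive pairs $(k,k+1)$, so it is equivalent to show that the number of cuts, i.e. of indices $k$ with $k \nsim_\pi k+1$, is at most $2(\alpha-1)$. Writing $W(\pi)$ for the number of within-block pairs (those with $k \sim_\pi k+1$), so that $W(\pi) + \#\{\text{cuts}\} = N$, I would prove $W(\pi) \ge N - 2(\alpha-1)$ by induction on the number $\alpha$ of blocks. The base case $\alpha = 1$ is immediate: then $\pi = 1_N$, every cyclic pair lies in the single block, there are no cuts, and $N = N - 2(\alpha-1)$.

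For the inductive step with $\alpha \ge 2$, the key structural input is the standard fact that a non-crossing partition with more than one block always possesses an interval block, namely a block $V = \{i, i+1, \dots, j\}$ made of cyclically consecutive elements; this follows by taking a block contained in a minimal nonempty gap between two successive elements of some block. As $\alpha \ge 2$, this $V$ is a proper subset of $[N]$, so both boundary pairs $(i-1,i)$ and $(j,j+1)$ are cuts, while the $|V|-1$ pairs internal to $V$ are all within-block. Deleting $V$ produces a non-crossing partition $\pi'$ of the complementary cyclic interval $C = [N]\setminus V$, which has $\alpha-1$ blocks and $N-|V|$ elements, to which the induction hypothesis applies.

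The main (mild) obstacle is bookkeeping the cyclic wrap-around when comparing $\pi$ and $\pi'$. The within-block pairs strictly internal to $C$ agree for $\pi$ and $\pi'$; however, viewed on the cycle $C$, the partition $\pi'$ has one additional cyclic pair joining the two ends of $C$ (the pair corresponding to $(i-1,j+1)$), which may or may not be within-block. Thus, if $L$ denotes the number of within-block pairs internal to $C$, one has $W(\pi) = (|V|-1) + L$ and $W(\pi') = L + \epsilon$ with $\epsilon \in \{0,1\}$, whence $W(\pi) \ge |V| - 2 + W(\pi')$. Combining this with the induction hypothesis $W(\pi') \ge (N-|V|) - 2(\alpha-2)$ yields $W(\pi) \ge N - 2(\alpha-1)$, which closes the induction and proves the lemma.
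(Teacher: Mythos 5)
Your proof is correct and follows essentially the same route as the paper's: induction on the number of blocks, extracting an interval block (which exists because the partition is non-crossing), and deleting it while losing at most $2$ in the count of consecutive within-block pairs. Your treatment is in fact slightly more careful than the paper's, which simply asserts the inequality $c(\pi) \geq S-1 + c(\pi{\left|_{[N]\setminus A}\right.}) - 1$ where your explicit $\epsilon \in \{0,1\}$ bookkeeping of the wrap-around pair justifies that assertion.
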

\begin{proof}
  For $\pi \in NC(N)$, let us denote by $c(\pi)$ the number of $k \in [N]$
  such that $k \sim_\pi k+1$. We prove by induction on $\alpha$ that if
  $\pi\in NC(N)$ has $\alpha$ blocks, then $c(\pi) \geq N-2(\alpha-1)$. If
  $\alpha = 1$, this is clear since $c(\pi)=N$.

  Assume that the statement of the lemma is true for all $N$ and all $\pi
  \in NC(N)$ with $\alpha$ blocks.  Take $\pi \in NC(N)$ with $\alpha+1$
  blocks. Since $\pi$ is non-crossing there is a block of $\pi$, say $A$,
  which is an interval of size $S$. If $\pi{\left|_{[N]\setminus
        A}\right.}$ is regarded as an element of $NC(N-S)$ then $c(\pi)
  \geq S-1 + c(\pi{\left|_{[N]\setminus A}\right.}) -1$. By the induction
  hypothesis $c(\pi{\left|_{[N]\setminus A}\right.}) \geq N-S-2(\alpha-1)$,
  which implies $c(\pi) \geq N-2\alpha$ and thus concludes the proof.
\end{proof}

The next Lemma is the main result of this section, and Theorem
\ref{thm=decomposition_of_NC*nm} will easily follow from it:
\begin{lemma}
\label{lemma=a_subset_of_great_size}
Let $\sigma \in NC^*_2(d,m)$. Then there is a subset $A$ of $[2dm]$ of size
greater than $2dm-4m$, which is a union of blocks of $\sigma$, and such
that for any $\pi \in NC^*(d,m)$ with $\mathcal P(\pi)= \mathcal P(\sigma)$
and any $k \in A$, $\pi(k) = \sigma(k)$.
\end{lemma}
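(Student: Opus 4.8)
My plan is to exploit the two facts that pin a valid $\pi$ down almost completely: by Lemma \ref{lemma=equivalent_def_of_NCstardm} every block of a partition in $NC^*(d,m)$ lies inside a single $A_i$, so $\pi$ is determined by its restrictions $\pi|_{A_i}$, and by Lemma \ref{lemma=taille_du_quotient_pour_n=1} the constraint $\mathcal P(\pi)=\mathcal P(\sigma)$ already forces, within each $A_i$, all the relations $2k_j\sim_\pi 2k_{j+1}-1$ coming from the consecutive elements $k_1<\dots<k_p$ of the blocks of $\rho_i\egdef\Phi(\sigma|_{A_i})$. Since $\sigma$ is a pairing, these forced relations are \emph{exactly} its blocks; thus $\sigma(k)$ is recovered whenever the corresponding arc cannot be enlarged. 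The whole problem is therefore to locate the arcs that $\pi$ is free to merge, and to show there are few of them.

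The key geometric observation I would use is that consecutive levels $i$ and $i+1$ are interleaved as \emph{adjacent} elements inside every interval $J_k$ (label $i$ and label $i+1$ sit next to each other, the order depending on the parity of $k$), which is exactly the mechanism behind the chain $\rho_1\leq\dots\leq\rho_d$ established in Lemma \ref{lemma=P_well_defined}. I will define $A$ to be the union of the $\sigma$-blocks $\{x,y\}\subset A_i$ that are \emph{sandwiched}: the pair of adjacent elements $x',y'\in A_{i\pm1}$ lying next to $x,y$ is itself a forced pair of $\sigma|_{A_{i\pm1}}$ (equivalently $x'\sim_\pi y'$ is forced by Lemma \ref{lemma=taille_du_quotient_pour_n=1} at the neighbouring level), together with all pairs coming from singleton blocks of $\rho_i$. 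For such a sandwiched pair I would argue rigidity: the forced chord $x'$–$y'$ is a barrier, so any additional $\pi$-connection leaving $x$ would have to cross it and violate non-crossing of $\pi$ on $[2dm]$; hence $x$ can only reach $y$ inside its own $A_i$, forcing $\pi(x)=\{x,y\}=\sigma(x)$. This is the step where the \emph{global} non-crossing property of $\pi$ (not merely its restrictions) is essential, exactly as in the proof of Lemma \ref{lemma=P_well_defined}.

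It then remains to bound the complement. A pair of $\sigma|_{A_i}$ fails to be sandwiched from above only when the two consecutive block-elements $k_j,k_{j+1}$ of $\rho_i$ end up in different blocks of the finer $\rho_{i+1}$, i.e.\ precisely at a split of the chain; and symmetrically for sandwiching from below. Each elementary refinement step from $\rho_i$ to $\rho_{i+1}$ cuts the cyclic order of one block in two and therefore breaks only a bounded number of such consecutive arcs, so that summing and telescoping along the chain gives a total number of broken arcs controlled by $|\rho_d|-|\rho_1|\leq m-1$, with the boundary levels $1$ and $d$ absorbed into the count (using the available neighbour and the rigidity of singleton blocks). Converting arcs to elements (a factor $2$) and combining the ``above'' and ``below'' deficits, I expect a bound of fewer than $4m$ elements outside $A$, whence $|A|>2dm-4m$. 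The main obstacle I anticipate is the bookkeeping of the telescoping count: making the ``one split breaks a bounded number of arcs'' statement precise in the cyclic, non-crossing setting — likely via an argument in the style of Lemma \ref{lemma=number_of_succesives_in_partitions} applied to the $\rho_i$ — and ensuring the constant genuinely comes out at most $4m$ uniformly in $d$, which is the whole point of the statement.
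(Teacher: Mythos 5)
Your proposal follows the paper's proof in all its essentials: the forced relations of Lemma \ref{lemma=taille_du_quotient_pour_n=1}, rigidity obtained by exhibiting a forced chord at a neighbouring level that any enlargement would have to cross, and a telescoping count of the unprotected pairs along the chain via Lemma \ref{lemma=number_of_succesives_in_partitions}. However, the symmetric clause ``sandwiched from above \emph{or below}'' contains a genuine error: sandwiching from below protects nothing. The geometry is not symmetric in $i$. A level-$i$ pair produced by Lemma \ref{lemma=taille_du_quotient_pour_n=1} is the chord joining the label-$i$ element of an even interval $J_{2k_s}$ to the label-$i$ element of the odd interval $J_{2k_{s+1}-1}$, where $k_s,k_{s+1}$ are consecutive in their block of $\rho_i$. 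Since the labels decrease left-to-right in even intervals and increase in odd ones, the two adjacent label-$(i+1)$ elements sit just \emph{outside} this chord (immediately before its left endpoint and immediately after its right endpoint), whereas the two adjacent label-$(i-1)$ elements sit just \emph{inside} it. Every element that $\pi$ could merge into the pair lies \emph{outside} the chord: such an element must lie in the same block of $\rho_i$, and by consecutiveness of $k_s,k_{s+1}$ no other element of that block lies under the chord. Consequently a forced chord lying outside separates the pair from all candidates, and any merging relation crosses it --- this is exactly the paper's argument, and it works only for the level above. A forced chord lying inside the pair is simply nested under any chord going from the pair to an outside candidate, produces no crossing, and hence no contradiction.

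This is not a reparable presentation issue but a false statement: take $d=m=2$, so $A_1=\{1,4,5,8\}$, $A_2=\{2,3,6,7\}$, and $\sigma=\left\{\{1,8\},\{2,7\},\{3,6\},\{4,5\}\right\}\in NC^*_2(2,2)$, with $\mathcal P(\sigma)=(1_2,1_2)$. The level-$2$ pair $\{3,6\}$ is sandwiched from below by $\{4,5\}$, and the relation $4\sim_\pi 5$ is indeed forced throughout the fiber; nevertheless $\pi=\left\{\{1,8\},\{2,3,6,7\},\{4,5\}\right\}$ belongs to $NC^*(2,2)$, satisfies $\mathcal P(\pi)=\mathcal P(\sigma)$, and has $\pi(3)=\{2,3,6,7\}\neq\sigma(3)$. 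So your set $A$ would contain $3$ and $6$ and the conclusion of the lemma would fail. Dropping the ``below'' clause then exposes the second problem you yourself flagged, the top level: at level $d$ there is no level above, and the only level-$d$ pairs that are automatically rigid are those coming from \emph{singleton} blocks of $\rho_d$ (for a singleton class the $\pi$-block has nowhere else to go). The paper handles this cleanly with the convention $\sigma_{d+1}=0_m$: a level-$i$ pair is declared protected when its two indices remain related in $\rho_{i+1}$, with $\rho_{d+1}=0_m$, and the telescoping sum $\sum_{i=1}^d\bigl(2m-4(|\rho_{i+1}|-|\rho_i|)\bigr)=2dm-4m+4|\rho_1|>2dm-4m$ then absorbs the top-level deficit automatically. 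With the ``below'' alternative deleted and level $d$ treated this way, your argument becomes precisely the paper's proof.
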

\begin{proof}
  For any $1 \leq j \leq d$, denote by $\sigma_j =
  \Phi(\sigma{\left|_{A_j}\right.})$. Denote by $\sigma_{d+1} = 0_m$. Fix
  now $1 \leq i \leq d$ and $\{k_1<k_2<\dots <k_p\}$ a block of
  $\sigma_i$. As usual we take the convention that $k_{p+1}=k_1$. We claim
  that if $k_s \sim_{\sigma_{i+1}} k_{s+1}$ then for any $\pi \in
  NC^*(d,m)$ with $\mathcal P(\pi)= \mathcal P(\sigma)$, $\pi(2dk_s-i+1) =
  \{2dk_s-i+1,2dk_{s+1}-2d+i\}$ ($=\sigma(2dk_s-i+1)$ by Lemma
  \ref{lemma=taille_du_quotient_pour_n=1}).

  Let us first check that this claim implies the Lemma. By Lemma
  \ref{lemma=P_well_defined}, $\sigma_{i+1}$ is finer than $\sigma_i$ and
  in particular its restriction to $\{k_1,k_2,\dots ,k_p\}$ makes sense. By
  Lemma \ref{lemma=number_of_succesives_in_partitions}, the number of $s$'s
  in $\{1,\dots ,p\}$ such that $k_s \sim_{\sigma_{i+1}} k_{s+1}$ is greater
  than $p-2(|{\sigma_{i+1}}{\left|_{\{k_1,k_2,\dots ,k_p\}}\right.}| -1)$
  where $|\sigma|$ is the number of blocks of $\sigma$. Thus summing over
  all blocks of $\sigma_i$ we get at least $2m-4(|\sigma_{i+1}| -
  |\sigma_i|)$ elements $k$ in $A_i$ such that $\pi(k) = \sigma(k)$ for any
  $\pi \in NC^*(d,m)$ with $\mathcal P(\pi)= \mathcal P(\sigma)$. This
  allows to conclude the proof since
\[\sum_{i=1}^d \left(2m-4(|\sigma_{i+1}| - |\sigma_i|)\right) = 2md -4
|\sigma_{d+1}| + 4 |\sigma_1| >  2md -4m.\]

Note that $A$ is constructed as a union of blocks of $\sigma$. 

We now only have to prove the claim. Assume that $k_s \sim_{\sigma_{i+1}}
k_{s+1}$ and take $\pi \in NC^*(d,m)$ such that $\mathcal P(\pi) = \mathcal
P(\sigma)$. By Lemma \ref{lemma=taille_du_quotient_pour_n=1} applied to
$\Phi(\sigma{\left|_{A_i}\right.}) = \sigma_i$, $2dk_s-i+1 \sim_\pi
2dk_{s+1}-2d+i$. Thus we only have to prove that if $k_s
\sim_{\sigma_{i+1}} k_{s+1}$ there is no $k \in \{k_1,\dots , k_p\}
\setminus \{k_{s+1}\}$ such that $2dk_s-i+1 \sim_\pi 2dk-2d+i$.

But if $k_s \sim_{\sigma_{i+1}} k_{s+1}$ then $i \neq d$ (because
$\sigma_{d+1} = 0_{m}$) and by Lemma \ref{lemma=P_well_defined}, $k_s$ and
$k_{s+1}$ are consecutive elements in $\sigma_{i+1}(k_s)$. Thus by Lemma
\ref{lemma=taille_du_quotient_pour_n=1}, $2dk_s-i \sim_\pi
2dk_{s+1}-2d+i+1$. The condition that $\pi$ is non-crossing implies the
claim since for $k \in \{k_1,\dots ,k_p\} \setminus \{k_{s+1}\}$,
\[ 2dk_s-i< 2dk_s-i+1 < 2dk_{s+1}-2d+i+1 <2dk-2d+i,\]
that is $(2dk_s-i+1,2dk-2d+i)$ and $(2dk_s-i,2dk_{s+1}-2d+i+1)$ are
crossing.

\end{proof}

 We can now prove the Theorem.
\begin{proof}[Proof of Theorem \ref{thm=decomposition_of_NC*nm}]
  Let $\sigma \in NC^*_2(d,m)$. If $\pi \in NC^*(d,m)$ satisfies $\mathcal
  P(\pi)=\sigma$ then Lemma \ref{lemma=taille_du_quotient_pour_n=1} applied
  to $\sigma{\left|_{A_i}\right.}$ and $\pi{\left|_{A_i}\right.}$ for
  $i=1,\dots,d$ proves that $\sigma$ is finer than $\pi$, and
  \ref{lemma=a_subset_of_great_size} implies that $\pi$ has at least
  $dm-2m$ blocks of size $2$. The fact that every block of $\pi$ has size
  at most $m$ just follows from the definition of $NC^*(d,m)$: $\pi$ is
  indeed finer than $\{A_1,\dots A_d\}$ with $|A_j| =2m$.

We now prove the first statement of Theorem
\ref{thm=decomposition_of_NC*nm}. Let $A$ be the subset of $[2dm]$ given by
Lemma \ref{lemma=a_subset_of_great_size}.  Then there is an injection:
\[ \begin{array}{ccl}\left\{ \pi \in NC^*(d,m), \mathcal P(\pi)=\mathcal
    P(\sigma) \right\} & \to & NC([2dm]\setminus A) \\
  \pi & \mapsto & \pi{\left|_{[2dm]\setminus A}\right.}
\end{array}\]
In particular since there are less than $4^N$ non-crossing partitions on
$[N]$, the first statement of the Theorem follows with $4^{2m}$ replaced by
$4^{4m}$ because $[2dm]\setminus A$ has cardinality less than $4m$. To get
the $4^{2m}$ just replace $[2dm]\setminus A$ by a set $B$ that contains
exactly one element of $\sigma(k)$ for any $k \in [2dm]\setminus A$. Then
$B$ has cardinality less than $2m$ because $[2dm] \setminus A$ is a union
of blocks (=pairs) of $\sigma$, and the previous map is still an
injection since $\pi \in NC^*(d,m)$ and $\mathcal P(\pi)=\mathcal
P(\sigma)$ implies that $\sigma$ is finer that $\pi$.
\end{proof}

\subsection{Invariant of the $P_k$'s}
\label{part=invariants_of_P_k}
Motivated by Lemma \ref{lemma=CauchySchwarz} we are interested in
invariants of the operations $P_{kd}$ on $NC^*(d,m)$. For $\pi \in
NC^*(1,m)$ let $B(\pi)$ be the number of blocks in $\Phi(\pi)$. This is the
fundamental observation:
\begin{lemma}
\label{lemma=combinatorial_invariant}
For any $\pi \in NC^*(1,m)$,
\[B(\pi) = \frac 1 2 \left(B(P_k(\pi)) + B(P_{k+m}(\pi))\right).\]
\end{lemma}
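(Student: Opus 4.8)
The plan is to reinterpret $B(\pi)$ as a block count of a join of partitions, and then to split this count along the common symmetry axis of $P_k$ and $P_{k+m}$.

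First I would record that $\Phi$ is nothing but the contraction of the pairs of $r_m=\{\{2j-1,2j\}\}$, so that, writing $\#\rho$ for the number of blocks of a partition $\rho$ of $[2m]$, one has $B(\pi)=\#(\pi\vee r_m)$ (join in the partition lattice of $[2m]$). Next I would isolate the three structural facts that make $P_k$ and $P_{k+m}$ directly comparable: on $[2m]=\Z/2m\Z$ one has $s_k=s_{k+m}$ and $I_{k+m}=[2m]\setminus I_k$, so $P_k$ and $P_{k+m}$ are the two symmetrizations about one and the same axis, keeping $I_k$ respectively its complement; and $r_m$ is invariant under $s_k$ (indeed $s_k$ sends the pair $\{2j-1,2j\}$ to the pair $\{2(k-j+1)-1,2(k-j+1)\}$). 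Consequently the three partitions $\tau:=\pi\vee r_m$, $\tau_\ell:=(P_k\pi)\vee r_m$ and $\tau_r:=(P_{k+m}\pi)\vee r_m$ are all non-crossing (by the first part of Lemma \ref{lemma=properties_of_symmetrization_forNCnm}), and $\tau_\ell,\tau_r$ are moreover $s_k$-symmetric.

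The heart of the count is two elementary formulas relating a non-crossing partition to the axis. On one hand, inclusion--exclusion over the cover $[2m]=I_k\cup I_{k+m}$ gives
\[ B(\pi)=\#\tau=N_\ell+N_r-c,\]
where $N_\ell$ (resp. $N_r$) is the number of blocks of $\tau$ meeting $I_k$ (resp. $I_{k+m}$) and $c$ is the number of blocks crossing the axis, i.e. meeting both halves. On the other hand, for a non-crossing $s_k$-symmetric partition $T$ every block is either fixed by $s_k$ — and one checks, using that $s_k$ fixes no vertex but preserves the nesting at each gap, that the fixed blocks are exactly the axis-crossing ones, $c(T)$ of them — or comes in a reflected pair with one block in each half; counting against $I_k$ yields $\#T=2\,\#(T|_{I_k})-c(T)$. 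Applying this to $\tau_\ell$ and $\tau_r$, and using that for a symmetric partition the induced partition on $I_k$ is computed locally (a connecting path leaving $I_k$ may be folded across the axis), so that $\#(\tau_\ell|_{I_k})=\#(\pi|_{I_k}\vee r_m|_{I_k})=:g_\ell$ and likewise $g_r$, I get
\[B(P_k\pi)+B(P_{k+m}\pi)=2(g_\ell+g_r)-\bigl(c(\tau_\ell)+c(\tau_r)\bigr).\]
Comparing with $2B(\pi)=2(N_\ell+N_r)-2c$, the lemma reduces to the two identities $g_\ell+g_r=N_\ell+N_r$ and $c(\tau_\ell)+c(\tau_r)=2c$.

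The identity on the $N$'s should follow from the fact that, for the non-crossing $\tau$ and a cyclic interval, joining with $r_m$ and then restricting to $I_k$ agrees with restricting first and then joining: the only possible discrepancy comes from the at most two $r_m$-pairs straddling the two gaps of $I_k$, and the non-crossing property of $\pi$ forces such a straddling pair to be redundant (its outer endpoint reconnects only to elements already joined to the inner one inside that half), giving $g_\ell=N_\ell$ and $g_r=N_r$ separately. The main obstacle is the crossing identity $c(\tau_\ell)+c(\tau_r)=2c$: here one must analyse, gap by gap, how the nested family of axis-crossing blocks of $\tau$ is transformed under ``keep the left half'' versus ``keep the right half''. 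The clean way to do this is to count crossing arcs at each of the two gaps $(k,k+1)$ and $(k+m,k+m+1)$ via Lemma \ref{lemma=taille_du_quotient_pour_n=1}, which describes the successor arcs $2k_i\sim_\pi 2k_{i+1}-1$ inside a block of $\Phi$; reflecting the left-hand data reproduces the crossing blocks of $\tau$ at one gap and, together with the right-hand reflection, accounts for those at the other, so that the two corrections sum to $2c$. This arc bookkeeping at the axis, rather than the global block count, is where the real work lies.
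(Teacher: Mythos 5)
Your reduction is set up in a genuinely different way from the paper's proof (which never mentions joins: it re-encodes $B(\sigma)-1$, via Lemma \ref{lemma=B_is_the_number_of_first_odd}, as the number of odd elements that start a block before the reference point, and computes how $P_k$ and $P_{k+m}$ transform that count), and the skeleton is sound: the identities $B(\pi)=\#(\pi\vee r_m)$, $\#\tau=N_\ell+N_r-c$, and $\#T=2\,\#(T|_{I_k})-c(T)$ for a symmetric non-crossing $T$ are all correct, as is the folding argument for $\#(\tau_\ell|_{I_k})=g_\ell$. The genuine gap is that the lemma has now been reduced to the two identities $g_\ell+g_r=N_\ell+N_r$ and $c(\tau_\ell)+c(\tau_r)=2c$, and you prove neither: the second you explicitly leave open (``where the real work lies''), and the justification offered for the first is incorrect. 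You assert that the only possible discrepancy between the local join $\pi|_{I_k}\vee r_m|_{I_k}$ and the trace on $I_k$ of the global join comes from $r_m$-pairs straddling the two gaps. It does not: a discrepancy also arises whenever two \emph{distinct} axis-crossing blocks are linked in $\tau$ by a chain lying entirely in the opposite half. Take $m=4$, $k=2$ (so $I_k=\{7,8,1,2\}$ and no $r_m$-pair straddles either gap) and
\[\pi=\bigl\{\{1\},\{2,3\},\{4,5\},\{6,7\},\{8\}\bigr\}.\]
Then $\tau=\pi\vee r_4=1_8$, so $N_\ell=1$ and $c=1$, while $\pi|_{I_k}\vee r_4|_{I_k}=\bigl\{\{1,2\},\{7,8\}\bigr\}$ gives $g_\ell=2$; moreover $P_2\pi\vee r_4=\bigl\{\{1,2,3,4\},\{5,6,7,8\}\bigr\}$ and $P_6\pi\vee r_4=1_8$, so $c(\tau_\ell)+c(\tau_r)=3\neq 2c$ and $\#\tau_\ell+\#\tau_r=3\neq 2\,\#\tau$. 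This $\pi$ has odd blocks, so it contradicts neither the lemma nor your claims as restricted to $NC^*(1,m)$; but your justifications invoke only non-crossingness and the axis geometry, both of which this example satisfies, so they cannot be correct as given. Both identities hold only because of the even-block hypothesis, and any completion of your plan must inject that parity information (for instance, that the region between two adjacent crossing blocks meets each half in a union of even blocks, forcing connectivity on both sides or neither) --- which is exactly the bookkeeping you have not done.

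That bookkeeping is where the paper's proof spends all its effort: there, $F(\sigma,k)$ is computed for $\sigma=\pi$, $P_k\pi$, $P_{k+m}\pi$ by splitting $F(\pi,k)$ into blocks contained in $I_{k+m}$ versus blocks meeting $I_k$, and the parity of $k$ together with the position of $\pi(k+1)$ produces $\pm 1$ correction terms that cancel only when the two counts are summed. Your two outstanding identities would require an argument of exactly this delicacy, so the proposal, while an attractive reformulation, stops short of a proof.
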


This Lemma is a consequence of the following description, which proves that
for any $k$, the set of blocks of $\Phi(\pi)$ but one is in bijection with
the set of blocks of $\pi$ that do not contain $k$ and that begin with an
odd element (after $k$ for the cyclic order):
\begin{lemma}
\label{lemma=B_is_the_number_of_first_odd}
Let $k \in [2m]$ and $\pi \in NC^*(1,m)$. Then $B(\pi)-1$ is equal to the
number of $l \in [2m] \setminus \{k\}$ such that $l$ is odd and such that
for any $l' \sim_\pi l$, $ l \leq l' <k$ (for the cyclic order). 
\end{lemma}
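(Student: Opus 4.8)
The plan is to reinterpret the right-hand side geometrically and then count its contributions one block of $\Phi(\pi)$ at a time. First I would reformulate the counting condition. Cutting the cycle $[2m]$ immediately after $k$ turns the cyclic order into a genuine linear order in which $k$ is the largest element; call the minimal element of a block (in this order) its \emph{head}. The requirement ``$l$ odd, $l\neq k$, and $l\le l'<k$ for all $l'\sim_\pi l$'' says exactly that $l$ is odd, that $l$ is the head of its block, and that the block does not contain $k$. So the right-hand side counts the blocks $V$ of $\pi$ with $k\notin V$ whose head is odd (distinct valid $l$ sit in distinct blocks), and it suffices to prove that every block of $\Phi(\pi)$ except the unique one whose preimage under $j\mapsto\{2j-1,2j\}$ contains $k$ contributes exactly one such $V$, while that exceptional block contributes none.

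Next I would fix a block $W=\{k_1<\dots<k_p\}$ of $\Phi(\pi)$ (in cyclic order) and work inside its preimage $U=\bigcup_i\{2k_i-1,2k_i\}$, which is a union of blocks of $\pi$. By Lemma~\ref{lemma=taille_du_quotient_pour_n=1} one has $2k_i\sim_\pi 2k_{i+1}-1$ for all $i$ (cyclically), so each odd slot $2k_{i+1}-1$ lies in the same block of $\pi$ as the even slot $2k_i$; consequently $2k_{i+1}-1$ can be the head of its block only if this partner $2k_i$ is read \emph{after} it. The key observation is a descent count: as $i$ runs through $1,\dots,p$ cyclically, the even slots $2k_i$ traverse the circle once clockwise, so their positions in the linear order increase at every step except for the single step that crosses the cut situated just after $k$. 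Hence there is exactly one index $i^{*}$ at which the partner of $2k_{i^{*}}-1$ is read later, and at that index $2k_{i^{*}}-1$ is in fact the global minimum (head) of all of $U$. Thus $W$ produces at most one candidate block with an odd head, namely $\pi(2k_{i^{*}}-1)$.

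Finally I would decide whether this candidate is actually counted. If $k\notin U$, then $\pi(2k_{i^{*}}-1)\subseteq U$ avoids $k$ and has the odd head $2k_{i^{*}}-1$, so $W$ contributes exactly $1$. If instead $k\in U$ — this is the unique exceptional block — then $k$ is a slot of $W$, and the position of its pair pins down where the cut lies: either $k=2k_{i^{*}}-1$, in which case the head of $U$ is the even slot $k+1$ and no odd head occurs, or $k$ is the even slot $2k_{i^{*}-1}$, which is precisely the partner of $2k_{i^{*}}-1$ and hence lies in $\pi(2k_{i^{*}}-1)$, disqualifying the only candidate; either way $W$ contributes $0$. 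Summing over the blocks of $\Phi(\pi)$ yields $B(\pi)-1$.

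I expect the bookkeeping around this exceptional block to be the main obstacle. One must verify that the single ``$-1$'' is accounted for exactly once — that is, that the cut-crossing step of the descent argument lands in the block containing $k$ — and one must translate the cyclic inequality $l\le l'<k$ into the linear reading order with care, in particular separating the two parities of $k$, since whether the cut splits the pair $\{2c-1,2c\}$ containing $k$ is exactly what distinguishes the two ways the exceptional block fails to contribute.
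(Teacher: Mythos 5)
Your proof is correct and follows essentially the same route as the paper: the paper exhibits the bijection $l \mapsto \Phi(\pi)\left(\lfloor (l+1)/2 \rfloor\right)$ between the counted odd elements and the blocks of $\Phi(\pi)$ not containing $\lfloor (k+1)/2 \rfloor$, whose inverse sends a block to the smallest $l$ greater than $k$ in cyclic order with $\lfloor (l+1)/2\rfloor$ in that block --- exactly your ``head of $U$'' correspondence --- and leaves the verification (your descent count, via Lemma \ref{lemma=taille_du_quotient_pour_n=1}) to the reader. The only blemish is that your second paragraph's blanket claim that there is exactly one index $i^{*}$ whose partner is read later fails when $k$ is an odd element of $U$ (then no such index exists at all), but your third paragraph treats precisely that sub-case correctly, so the argument as a whole is sound.
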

\begin{proof}
  Indeed the set of odd $l$'s different from $k$ such that $l' \sim_\pi l
  \Rightarrow l \leq l' <k$ (for the cyclic order) is in bijection with the
  blocks of $\Phi(\pi)$ that do not contain $\lfloor{(k+1)/2} \rfloor$.

  The direct map consists in mapping to any such $l$ the block
  $\Phi(\pi)(\lfloor{(l+1)/2} \rfloor)$ and the reverse map gives to any
  block $A$ of $\Phi(\pi)$ no containing $\lfloor{(k+1)/2} \rfloor$ the
  smallest $l$ greater than $k$ (again for the cyclic order) such that
  $\lfloor{(l+1)/2} \rfloor \in A$. The reader can check using Lemma
  \ref{lemma=taille_du_quotient_pour_n=1} that these maps are indeed
  inverses of each other.
\end{proof}

\begin{proof}[Proof of Lemma \ref{lemma=combinatorial_invariant}]
  We use Lemma \ref{lemma=B_is_the_number_of_first_odd} with $k+1$ instead
  of $k$. For any $\pi \in NC^*(1,m)$ we denote by $F(\pi,k)$ the set of
  odd $l \in [2m] \setminus \{k+1\}$ such that $l' \sim_{\pi} l
  \Rightarrow l \leq l' <k+1$. We know that $|F(\pi,k)| =
  B(\pi)-1$. Moreover let us decompose $F(\pi,k)$ as the disjoint union of
  $F_1(\pi,k)$ and $F_2(\pi,k)$ defined by: $l \in F_1(\pi,k)$ if and only
  $l \in F(\pi,k)$ and $\pi(l) \subset I_{k+m}$; and $F_2(\pi,k)$ is the
  set of $l \in F(\pi,k)$ such that $\pi(l) \cap I_l \neq \emptyset$.

  If $l \in I_{k+m}$ then $l \in F(P_{k+m}(\pi),k)$ if and only if $l \in
  F(\pi,k)$ because if $k+1 \leq l' <l$, then $l' \sim_{P_{k+m}(\pi)} l$ if
  and only if $l' \sim_{\pi} l$.
  
  Take now $l \notin I_{k+m}$. By definition of $F(\cdot,k)$, $l$ is in
  $F(P_{k+m}(\pi),k)$ if and only if $l$ is odd and $l$ is the first
  element (after $k+1$ for the cyclic order) of a block of $P_{k+m}(\pi)$
  contained in $I_k$, which is equivalent to the fact that $s_k(l) =
  2k+1-l$ is even and is the last element of a block of $\pi$ contained in
  $I_{k+m}$. Such a block then has first element odd, and thus belongs to
  $F_1(\pi,k)$ except if it is equal to $k+1$. To summarize, we have thus
  proved that
\begin{equation} 
\label{eq=value_of_Fpk+m_of_pi1}
|F(P_{k+m}(\pi),k)| = |F(\pi,k) \cap I_{k+m}| +
  |F_1(\pi,k)| +1 \end{equation}

if $k+1$ is odd and $\pi(k+1) \subset I_{k+m} = \{k+1,k+2,\dots ,k+m\}$, and 
\begin{equation}
\label{eq=value_of_Fpk+m_of_pi2}
 |F(P_{k+m}(\pi),k)| = |F(\pi,k) \cap I_{k+m}| +
  |F_1(\pi,k)| \end{equation} otherwise.

We now compute $|F(P_k(\pi),k)|$. If $l \in I_k$ then as above $l \in
F(P_{k}(\pi),k)$ if and only if $l \in F(\pi,k)$. If $l \notin I_k$
then $l \in F(P_{k}(\pi),k)$ if and only if $l$ is odd and $l$ is the
first element strictly after $k+1$ (in the cyclic order) of a block of
$P_k(\pi)$ not containing $k+1$. By construction of $P_k(\pi)$ this is
equivalent to the fact that $s_k(l) = 2k+1-l$ is even, belongs to
$I_k$, is different from $k$ and is the last element before $k$ in a
block of $\pi$. The first element (strictly after $k$ in the cyclic
order) of such a block is then in $F_2(\pi,k)$ except if it is equal
to $k+1$. Reciprocally, if $l'$ is the last element of a block
containing an element of $F_2(\pi,k)$ then $l = s_k(l') \in
F(P_k(\pi),k)$ except if $l'=k$. The same is true if $\pi(k+1)
\nsubseteq I_{k+m}$, $k+1$ is odd and if $l'$ denotes the last element
in $\pi(k+1)$. Thus
\begin{eqnarray*}
  |F(P_k(\pi),k)| & = & |F(\pi,k)\cap I_k| + |F_2(\pi,k)| - 1_{k\textrm{ is
      even}} + 1_{k\textrm{ is even and } \pi(k+1)\nsubseteq I_{k+m}}\\
  & = & |F(\pi,k)\cap I_k| + |F_2(\pi,k)| - 1_{k\textrm{ is
      even and } \pi(k+1)\subset I_{k+m}}. 
\end{eqnarray*}

Summing this last equality with \eqref{eq=value_of_Fpk+m_of_pi1} or
\eqref{eq=value_of_Fpk+m_of_pi2} yields
\begin{multline*}
 |F(P_k(\pi),k)| + |F(P_{k+m}(\pi),k)| = \\ |F(\pi,k)\cap I_k| +
|F_2(\pi,k)| + |F(\pi,k)\cap I_{k+m}| + |F_1(\pi,k)|
 = 2 |F(\pi,k)|.
\end{multline*}

This concludes the proof since by Lemma
\ref{lemma=B_is_the_number_of_first_odd} for any $\sigma \in NC^*(1,m)$,
$|F(\sigma,k)| = B(\sigma)-1$.
\end{proof}

\subsection{Study of $NC(d,m)$}\label{part=NCdmsansstar}
Another relevant subset of $NC(2dm)$ is the set $NC(d,m)$ of partitions
$\pi$ with blocks of even cardinality and that connect only elements of
different intervals $J_k$. In other words for all $i,j \in [2dm]$, $i
\nsim_\pi j$ if $i,j \in J_k$.

The following observation is very simple but, in view of Theorem
\ref{thm=non_holomo_Rdiag} or \ref{thm=non_holomo_selfadj}, it is the
motivation for the introduction of $NC(d,m)$ :
\begin{lemma} 
  \label{lemma=characterization_of_NCdm} Let $\pi \in NC(2dm)$ with blocks
  of even cardinality. Then $\pi \in NC(d,m)$ if and only if $\pi$ does not
  connect two consecutive elements of a same subinterval $J_i$. In other
  words, $i\sim_\pi i+1$ only if $i$ is a multiple of $d$.
\end{lemma}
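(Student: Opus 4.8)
The plan is to prove the two implications separately, the first being immediate and the second requiring a minimality argument. The forward implication is direct from the definition: if $\pi \in NC(d,m)$ then no two elements lying in a common interval $J_k$ are connected, and in particular two \emph{consecutive} elements $i, i+1$ of a same $J_k$ are not connected. Since $i$ and $i+1$ lie in a common interval precisely when $i$ is not a multiple of $d$ (the right endpoint of $J_k$ is $kd$), this says exactly that $i \sim_\pi i+1$ forces $i$ to be a multiple of $d$.

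For the converse, suppose $\pi \in NC(2dm)$ has blocks of even cardinality and connects no two consecutive elements of a common subinterval, yet (for contradiction) connects two distinct elements of some $J_k$. Among all pairs $i < j$ lying in a common interval with $i \sim_\pi j$, I would pick one minimizing $j-i$. Since $i$ and $j$ lie in the same interval of size $d$, one has $j-i \leq d-1$, so the whole open interval $\{i+1, \dots, j-1\}$ stays inside $J_k$ and there are no wraparound issues; the cyclic and linear orders agree there. First I would check that $i$ and $j$ are consecutive elements of their block: any intermediate block-mate $l$ of $i$ would satisfy $i < l < j$, hence also lie in $J_k$, and $(i,l)$ would contradict the minimality of $j-i$.

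The heart of the argument is then the standard non-crossing observation that, $i$ and $j$ being consecutive in their block, the open interval $\{i+1, \dots, j-1\}$ is a union of full blocks of $\pi$: if some $c$ in this interval were connected to an element $e$ outside $[i,j]$, then the cyclic quadruple $i < c < j < e$ together with $i \sim_\pi j$ and $c \sim_\pi e$ would force $i \sim_\pi c$ by non-crossing, contradicting consecutiveness. This interval is nonempty, because $j-i=1$ is excluded by hypothesis (consecutive elements of a common interval are not connected), so $j-i \geq 2$. Being a nonempty union of blocks, each of even and hence $\geq 2$ cardinality, it contains a block with two elements $i' < j'$, both inside $J_k$, satisfying $j'-i' \leq (j-1)-(i+1) < j-i$, which again contradicts minimality. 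This forces $\pi$ to connect no two elements of any $J_k$, i.e. $\pi \in NC(d,m)$. The one step demanding care is this final reduction to a smaller connected pair: it is precisely here that the even-cardinality hypothesis enters, guaranteeing that the blocks sitting strictly between $i$ and $j$ do not reduce to isolated fixed points but genuinely produce a shorter connected pair within $J_k$.
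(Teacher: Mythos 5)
Your proof is correct and takes essentially the same route as the paper: the paper's one-line sketch invokes the fact that a non-crossing partition always contains an interval block, applied to the restriction of $\pi$ between two connected elements $s<t$ of a common $J_i$, and your minimal-pair argument (consecutiveness in the block, the open interval being a nonempty union of even-sized blocks, hence a strictly shorter connected pair inside $J_i$) is a self-contained rigorous unfolding of exactly that reduction. No gap to report.
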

\begin{proof}
  The \emph{only if} part of the proof is obvious. The converse follows
  from the fact that a non-crossing partition always contains an interval
  (if $\pi$ is non-crossing with blocks of even size, and $s<t \in J_i$
  with $s \sim_\pi t$ and $t\neq s+1$, apply this fact to
  $\pi{\left|_{\{s,s+1,\dots,t-1\}}\right.}$).
\end{proof}

The purpose of this section is to generalize Theorem
\ref{thm=decomposition_of_NC*nm}. Namely we prove
\begin{thm}
\label{thm=decomposition_of_NCn_m}
The cardinality of $NC(d,m)$ is less than $(4d+4)^{2m}$.

Moreover for any $\pi \in NC(d,m)$ the number of blocks of $\pi$ of size
$2$ is greater than $(d-2)m$.
\end{thm}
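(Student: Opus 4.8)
The plan is to prove the two statements of Theorem \ref{thm=decomposition_of_NCn_m} by following closely the strategy that worked for Theorem \ref{thm=decomposition_of_NC*nm}, but accounting for the weaker constraint: partitions in $NC(d,m)$ are only required to have even blocks that never connect two elements of the \emph{same} interval $J_k$, whereas $NC^*(d,m)$ had the stronger label-preserving requirement. I expect the cardinality bound to be the harder of the two, so I would first dispose of the block-size statement, which is a clean application of the counting lemma already in hand.

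For the second statement, I would invoke Lemma \ref{lemma=characterization_of_NCdm}: a partition $\pi \in NC(d,m)$ can only have $i \sim_\pi i+1$ when $i$ is a multiple of $d$. There are $2m$ multiples of $d$ in $[2dm]$, so the number of indices $i$ with $i \sim_\pi i+1$ is at most $2m$. On the other hand, Lemma \ref{lemma=number_of_succesives_in_partitions} says that if $\pi$ has $\alpha$ blocks then at least $2dm - 2(\alpha-1)$ of the indices $i$ satisfy $i \sim_\pi i+1$. Combining these, $2dm - 2(\alpha-1) \leq 2m$, which forces $\alpha \geq (d-1)m + 1$, i.e. $\pi$ has many blocks. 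Since the total number of elements is $2dm$ and every block has size at least $2$, a block of size $s \geq 4$ costs $s-2$ extra elements beyond the minimum; bounding the number of blocks of size $\geq 4$ by comparing $2dm = \sum_{\text{blocks}} |\text{block}| \geq 2(\#\text{size-2 blocks}) + 4(\#\text{larger blocks})$ against the block count should yield that the number of size-$2$ blocks exceeds $(d-2)m$. The main bookkeeping is to turn the lower bound on $\alpha$ into a lower bound on the count of pairs; I would set $b_2$ = number of size-$2$ blocks and $b_{\geq 4}$ = number of larger blocks, use $2dm \geq 2b_2 + 4 b_{\geq 4}$ and $\alpha = b_2 + b_{\geq 4} \geq (d-1)m+1$, and solve for $b_2$.

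For the cardinality bound, the natural approach is to build a surjection (or injection into a controlled set) analogous to the map $\mathcal P$, but since the intervals $J_k$ here are not subdivided by labels, the right reduction is simpler: I would identify each even-block non-crossing partition of $[2dm]$ in $NC(d,m)$ with the data of a non-crossing-type structure on the $2m$ intervals. Concretely, I would try to mimic the injection used at the end of the proof of Theorem \ref{thm=decomposition_of_NC*nm}: show that $\pi$ is largely determined by a pairing-type partition together with its restriction to a small set. Alternatively, and perhaps more directly, I would exploit that a $\pi \in NC(d,m)$, once we record for each of the $2m$ intervals which element of that interval begins each block and how the blocks thread through the intervals, is encoded by a non-crossing partition on at most $2m$ ``interval slots'' decorated with a choice among $d$ positions within each interval. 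This would give a bound of the shape (number of non-crossing partitions on $2m$ points) times (a factor of order $d$ per interval crossing), and since $NC(2m) \leq 4^{2m}$, pushing the per-slot factor to at most $d+1$ yields $(4d+4)^{2m}$.

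The hard part will be making the encoding in the cardinality proof precise enough to get exactly the factor $4d+4$ rather than a cruder bound. The delicate point is that within a single interval $J_k$ several blocks of $\pi$ may enter and leave, and I must bound the number of configurations inside each interval by a factor that multiplies out to $(d+1)^{2m}$ across the $2m$ intervals while the global non-crossing skeleton contributes the $4^{2m}$. I would look for an invariant counting, per interval, the ``entry/exit'' pattern, and argue that specifying for each interval the subset and pairing of its positions used as block-endpoints, compatibly with the non-crossing global structure, costs at most $d+1$ choices per interval; the non-crossing constraint should rigidify the gluing enough that the global skeleton together with these local choices reconstructs $\pi$ uniquely. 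If the clean factor $d+1$ per interval proves elusive, the fallback is to produce an injection from $NC(d,m)$ into pairs (a non-crossing partition of $[2m]$, a function $[2m] \to \{1,\dots,d+1\}$), whose cardinality is exactly $(d+1)^{2m} \cdot |NC(2m)| \leq (d+1)^{2m} 4^{2m} = (4d+4)^{2m}$.
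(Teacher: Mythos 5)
Your proof of the \emph{second} statement is correct, complete, and takes a genuinely different route from the paper's. You combine Lemma \ref{lemma=characterization_of_NCdm} (so at most $2m$ indices $i$, the multiples of $d$, can satisfy $i \sim_\pi i+1$) with Lemma \ref{lemma=number_of_succesives_in_partitions} to get $2dm-2(\alpha-1)\leq 2m$, hence $\alpha \geq (d-1)m+1$ blocks; then, writing $b_2$ and $b_{\geq 4}$ for the numbers of blocks of size $2$ and of size at least $4$, the size count $2b_2+4b_{\geq 4}\leq 2dm$ gives $b_{\geq 4}\leq m-1$ and $b_2 \geq (d-2)m+2$. The paper instead obtains this statement as a byproduct of its fiber-counting Lemma \ref{thm=description_of_NCpart_withKintervals}, which shows that at most $2(2m)-4$ elements of $[2dm]$ lie outside blocks of size $2$. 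Your argument is more elementary in that it uses only lemmas established \emph{before} this theorem, and it reaches the same quantitative conclusion.

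The cardinality bound, however, has a genuine gap: you describe two candidate encodings but construct neither, and the structural claim on which both rest is exactly the hard point. The paper's proof introduces the map $Q$ splitting each block $\{k_1<\dots<k_{2p}\}$ into the consecutive pairs $\{k_1,k_2\},\dots,\{k_{2p-1},k_{2p}\}$, so that $Q$ maps $NC(d,m)$ into the set $\mathscr I(d,m)$ of non-crossing pair partitions that never pair two elements of a same interval $J_k$. The theorem then follows from two separate counts: first $|\mathscr I(d,m)|\leq (d+1)^{2m}$, proved in the paper by identifying $|\mathscr I(d,m)|=\tau\bigl(T_d(s)^{2m}\bigr)$ for a semicircular element $s$ and the Tchebycheff polynomial $T_d$, with $\|T_d(s)\|=d+1$; and second, $|Q^{-1}(\sigma)\cap NC(d,m)|\leq 4^{2m-2}$ for every $\sigma$, which is Lemma \ref{thm=description_of_NCpart_withKintervals} and is proved by an induction over the outermost pairs of $\sigma$. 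Your ``decorated skeleton'' encoding implicitly assumes that the configuration of $\pi$ inside each interval costs at most $d+1$ choices. That assumption is true for \emph{pairings}: in a non-crossing pairing with no pair inside an interval, each interval consists of left-paired elements followed by right-paired ones, so it is determined by a single number in $\{0,\dots,d\}$, and the opener/closer word determines the pairing (this would even give you an elementary substitute for the Tchebycheff argument). But it is false as a template for general even-block partitions: the way the pairs of $Q(\pi)$ are re-glued into the blocks of $\pi$ is genuinely extra data, and bounding that data by $4^{2m-2}$, \emph{uniformly in} $d$, is precisely the content of the missing lemma; nothing in your sketch produces it.

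Finally, your fallback --- ``produce an injection from $NC(d,m)$ into pairs consisting of a non-crossing partition of $[2m]$ and a function $[2m]\to\{1,\dots,d+1\}$'' --- is circular as stated: between finite sets, such an injection exists if and only if the corresponding cardinality inequality holds, so asserting its existence without constructing it is just restating (a version of) the theorem. The proof must exhibit the encoding; that is what the map $Q$, the count of $\mathscr I(d,m)$, and the fiber lemma accomplish in the paper.
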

The idea of the proof is similar to the proof of Theorem
\ref{thm=decomposition_of_NC*nm}: we try to reduce to the subset of
$NC(d,m)$ consisting of partitions into pairs. For this we introduce the
map $Q = Q^{(N)}$ from the set of non-crossing partitions of $[2N]$ into
blocks of even sizes to the set of non-crossing partitions of $[2N]$ into
pairs. The map $Q$ has the property that if $\pi \in NC(2N)$ has blocks of
even sizes, then $Q(\pi)$ is finer than $\pi$ and any block $\{k_1,\dots
,k_{2p}\}$ of $\pi$ with $1 \leq k_1<\dots <k_{2p} \leq 2N$ becomes $p$
blocks of $Q(\pi)$, namely $\{k_1,k_2\},\dots,\{k_{2p-1},k_{2p}\}$. It is
straightforward to check that this indeed defines a non-crossing partition
of $[2N]$ into pairs.  Note that unlike in the rest of the paper here the
element $1 \in [2N]$ plays a specific role in the definition of $Q$ and we
abandon the cyclic symmetry of $[2N]$. But this has the advantage to allow
to define an order relation on the set of pairs of elements of $[2N]$: we
will say that a pair $(i,j)$ covers a pair $(k,l)$ if $1 \leq i<k<l<j\leq
2N$.

A noteworthy property of $Q$ is that if $\sigma = Q(\pi)$ then two blocks
(=pairs) of $\sigma$ cannot be contained in the same block of $\pi$ if one
covers the other. In other words if $1\leq i <k<l <j\leq 2N$ with 
$i\sim_\sigma j$ and $k\sim_\sigma l$ then $i \nsim_\pi k$.

Following the notation of section 3.1 in \cite{MR2353703}, the image
$Q(NC(d,m))$ is denoted by $\mathscr I(d,m)$; it is the set of partitions
of $\pi$ into pairs that do not connect elements of a same subinterval
$J_k$ for $k=1,\dots,2m$. We are not aware of any nice combinatorial
description of $\mathscr I(d,m)$ as for $NC^*_2(d,m)$, but a precise bound
for its cardinality is known: by the proof of Theorem 5.3.4 in
\cite{MR1660906}, the cardinality of $\mathscr I(d,m)$ is equal to
$\tau(T_d(s)^{2m})$ where $T_d$ is the $d$-th Tchebycheff polynomial and $s$
is a semicircular element of variance $1$ in a tracial $C^*$-algebra
$(A,\tau)$. In particular since $\|T_d(s)\| = d+1$ we have that $|\mathscr
I(d,m)|\leq (d+1)^{2m}$. Theorem \ref{thm=decomposition_of_NCn_m} will thus
follow from the following more general statement:

\begin{lemma}
\label{thm=description_of_NCpart_withKintervals}
  Suppose that $[2N]$ is divided into $k$ non-empty intervals $S_1,\dots,S_k$ and let
  $\sigma$ be a non-crossing partition of $[2N]$ into pairs that do not
  connect elements of a same subinterval $S_i$. Then there are at most
  $4^{k-2}$ non-crossing partitions $\pi$ of $[2N]$ that do not connect
  elements of a same subinterval $S_i$ and such that
  $Q(\pi)=\sigma$. Moreover for such a $\pi$ there are at most $2k-4$
  elements $i \in [2N]$ for which $\pi(i)$ is not a pair.
\end{lemma}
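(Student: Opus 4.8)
The plan is to mirror the proof of Lemma \ref{lemma=a_subset_of_great_size}, but now using the covering order on pairs that is built into the definition of $Q$ rather than the map $\mathcal P$. Fix a partition $\sigma$ into pairs that respects the intervals $S_1,\dots,S_k$, and let $\pi$ be any non-crossing partition with $Q(\pi)=\sigma$ that also respects the intervals. Since $Q(\pi)=\sigma$ forces $\sigma$ to be finer than $\pi$, the strategy is to show that most pairs of $\sigma$ are \emph{forced} to be blocks of $\pi$ on their own, so that $\pi$ is determined by $\sigma$ except on a small set of indices whose size is controlled by $k$.

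First I would isolate the ``bad'' pairs. Call a pair $\{i,j\}$ of $\sigma$ (with $i<j$) \emph{good} if the open interval $\{i+1,\dots,j-1\}$ contains no index belonging to a different interval $S_r$ than does $i+1$ — more usefully, I would call $\{i,j\}$ good if $i$ and $i+1$ lie in the same interval $S_r$ and $j-1,j$ lie in the same interval. Because $\sigma$ (hence $\pi$) never connects two elements of the same $S_r$, and because $\pi$ is non-crossing, I claim that for a good pair the block $\pi(\{i,j\})$ must equal $\{i,j\}$: if $i\sim_\pi k$ for some other $k$, the noteworthy covering property of $Q$ (no two pairs of $\sigma$ inside a common block of $\pi$ may cover one another) together with the interval condition rules out any candidate $k$, exactly as in the crossing argument at the end of the proof of Lemma \ref{lemma=a_subset_of_great_size}. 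I would make this precise by showing that any extra element in $\pi(\{i,j\})$ would either create a crossing or would be a covering pair inside the same block, both forbidden.

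Next I would bound the number of bad indices. A pair fails to be good only when one of its endpoints is adjacent across a boundary between two consecutive intervals $S_r,S_{r+1}$. There are $k-1$ interior boundaries (and I can drop the two boundaries at the extremes $1$ and $2N$ since $1$ plays a distinguished role in $Q$ and no pair can straddle the cyclic seam), so a counting of endpoints adjacent to a boundary shows that at most $2(k-1)-2 = 2k-4$ indices can lie in bad pairs; equivalently the number of $i$ for which $\pi(i)$ is not forced to be a pair is at most $2k-4$. This gives the second assertion directly, and I would present the boundary count as the combinatorial heart of the estimate, analogous to the use of Lemma \ref{lemma=number_of_succesives_in_partitions} in the $NC^*$ case.

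Finally, for the counting statement I would argue that the restriction map $\pi\mapsto \pi|_{B}$, where $B$ is a set meeting each bad pair of $\sigma$ in exactly one element, is injective on the fiber $\{\pi : Q(\pi)=\sigma\}$, precisely because $\sigma$ is finer than $\pi$ and $\pi$ agrees with $\sigma$ off the bad indices. Since $|B|\le k-2$ and there are fewer than $4^{|B|-?}$ non-crossing partitions on $B$, counting as in the proof of Theorem \ref{thm=decomposition_of_NC*nm} yields at most $4^{k-2}$ such $\pi$. The main obstacle I anticipate is verifying the forcing claim for good pairs with full rigor: one must check that the covering property of $Q$ combined with non-crossing genuinely eliminates \emph{all} ways of enlarging $\pi(\{i,j\})$, including the subtle configurations where a candidate partner sits just across an interval boundary; getting the boundary bookkeeping to yield exactly $2k-4$ rather than a weaker bound is where the care is needed.
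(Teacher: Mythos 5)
Your proof hinges on the forcing claim: that a ``good'' pair of $\sigma$ must be a block of $\pi$ on its own. That claim is false, under either of your two (inequivalent) definitions of good. For the definition you settle on ($i,i+1$ in the same interval and $j-1,j$ in the same interval), take $[8]$ with $S_1=\{1,2\}$, $S_2=\{3,4\}$, $S_3=\{5,6\}$, $S_4=\{7,8\}$ and $\pi=\left\{\{1,4,5,8\},\{2,3\},\{6,7\}\right\}$. This $\pi$ is non-crossing, has blocks of even size, never joins two elements of a common $S_r$, and $Q(\pi)=\sigma=\left\{\{1,4\},\{2,3\},\{5,8\},\{6,7\}\right\}$. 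The pair $\{1,4\}$ is good ($1,2\in S_1$ and $3,4\in S_2$), yet $\pi(1)=\{1,4,5,8\}\neq\{1,4\}$. For your first definition (the gap $\{i+1,\dots,j-1\}$ inside one interval, which for such $\pi$ forces $j=i+1$), take $[4]$ with the four singleton intervals and $\pi=\left\{\{1,2,3,4\}\right\}$: both pairs of $\sigma=\left\{\{1,2\},\{3,4\}\right\}$ are adjacent, and neither is a block of $\pi$. The structural reason the forcing cannot work is that the covering property of $Q$ only forbids \emph{nested} pairs of $\sigma$ from lying in a common block of $\pi$; it says nothing about merging pairs that sit side by side, and such a merger only involves elements \emph{outside} the interval $[i,j]$, so no condition local to the endpoints (adjacency, interval membership of $i+1$ and $j-1$, crossings inside $[i,j]$) can exclude it. Since the injectivity of your restriction map $\pi\mapsto\pi|_B$ rests on this forcing, both conclusions of your argument collapse with it. (There are also secondary problems: each of the $k-1$ interior boundaries can produce two \emph{distinct} bad pairs, so your bookkeeping gives up to $2(k-1)$ bad pairs, i.e.\ up to $4(k-1)$ bad indices, not $2k-4$; and the exponent in ``$4^{|B|-?}$'' is left undetermined.)

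The paper's proof is built precisely around the freedom your argument tries to exclude. It inducts on $N$: listing the outermost pairs $\{s_i,t_i\}$, $i=1,\dots,p$, of $\sigma$, it observes that $Q(\pi)=\sigma$ forces $\pi$ to split into (a) partitions of the gaps $\{s_i+1,\dots,t_i-1\}$ lying in the corresponding fibers, handled by the induction hypothesis since each gap meets at most $k_+(i)-k_-(i)+1$ intervals, and (b) an \emph{arbitrary} merging of the side-by-side outermost pairs compatible with non-crossing and the interval condition — exactly the mergers exhibited above — which is counted by the at most $4^{p-1}$ non-crossing partitions of $[p]$. Multiplying and using $\sum_i\bigl(k_+(i)-k_-(i)-1\bigr)\leq k-1-p$ gives $4^{k-2}$, and the $2k-4$ bound comes from the same decomposition together with the remark that if $k_+(i)=k_-(i+1)$ for all $i$, a non-crossing partition of $[p]$ with no two consecutive elements related must contain a singleton. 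Any correct proof must quantify this merging freedom rather than argue it away.
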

\begin{proof} We prove this statement by induction on $N$. For simplicity
  of notation we will assume that the intervals $S_1,\dots,S_k$ are
  ordered, \emph{i.e.}  that if $i \in S_s$ and $j\in S_t$ with $s<t$ then
  $i<j$.

If $N=1$ and $\sigma$ is as above then $\sigma = 1_2$, $k=2$, and there is
only one $\pi \in NC(2)$ with $Q(\pi)=\sigma$. This proves the assertion
for $N=1$.

Assume that the above statement holds for $1,2,\dots,N-1$ and take $\sigma$
as above. Consider the set $\left\{\{s_i,t_i\},i=1\dots p\right\}$ of
outermost blocks (=pairs) of $\sigma$, \emph{i.e} the set of pairs of
$\sigma$ that are not being covered by another block of $\sigma$.
If we order the $s_i$'s and $t_i$'s so that $s_i<t_i$ and $s_i < s_{i+1}$
then we have that $s_1=1$, $s_{i+1}=t_i+1$ and $t_p=2N$.

By the property of $Q$ mentioned above, a partition $\pi \in NC(2N)$ that
does not connect elements of the same interval $S_j$ (for $j=1,\dots,k$)
satisfies $Q(\pi)=\sigma$ if and only if the following properties are
satisfied:
\begin{itemize}
\item For any $1 \leq i \leq p$, $\{s_i+1,\dots, t_i-1\}$ is a union of
  blocks of $\pi$, the non-crossing partition $\pi{\left|_{\{s_i+1,\dots
        t_i-1\}}\right.}$ does not connect elements of the same subinterval
  $S_j\cap \{s_i+1,\dots t_i-1\}$ for $j=1,\dots,k$, and
  $Q(\pi{\left|_{\{s_i+1,\dots t_i-1\}}\right.}) =
  \sigma{\left|_{\{s_i+1,\dots t_i-1\}}\right.}$.
\item Any block of $\pi{\left|_{\{s_1,t_1,s_2,t_2,\dots,s_p,t_p\}}\right.}$
  is a union of pairs $\{s_i,t_i\}$ and does not contain $2$ elements of a
  same interval $S_j$.
\end{itemize}
Define $k_+(i)$ and $k_-(i)$ for $1 \leq i \leq p$ by $s_i \in S_{k_-(i)}$
and $t_i \in S_{k_+(i)}$. Then for any $1\leq i \leq p$, $k_-(i)<k_+(i)$ and
for $i<p$, $k_+(i) \leq k_-(i+1)$. 

Since $\{s_i+1,\dots t_i-1\}$ intersects at most $k_+(i)-k_-(i)+1$
different intervals $S_j$, we have by the induction hypothesis that the number
of non-crossing partitions of $\{s_i+1,\dots,t_i-1\}$ that satisfy the
first point above is at most $4^{k_+(i)-k_-(i)-1}$, and for such a
partition at most $2(k_+(i)-k_-(i)-1)$ elements of $\{s_i+1,\dots t_i-1\}$
do not belong to a pair. 

Moreover the set of non-crossing partitions of
$\{s_1,t_1,s_2,t_2,\dots,s_p,t_p\}$ that satisfy the second point is in
bijection with the set of non-crossing partitions of $\{s_i,i=1\dots p\}$
such that $s_i \nsim s_{i+1}$ if $k_+(i) = k_-(i+1)$. Its cardinality is in
particular less than (or equals) the number of non-crossing partitions of
$[p]$, which is less than $4^{p-1}$. Therefore the total number of
non-crossing partitions $\pi$ of $[2N]$ that do not connect elements of a
same subinterval $S_j$ and such that $Q(\pi)=\sigma$ is less than
\[4^{p-1} \prod_{i=1}^p 4^{k_+(i)-k_-(i)-1} \leq 4^{k-2}.\] 
We used the inequality $\sum_{i=1}^p k_+(i)-k_-(i)-1 \leq k-1-p$. 

To prove that for such a $\pi$ at most $2k-4$ elements of $[2N]$ do
not belong to a pair of $\pi$, note that for an element $j\in[2N]$ the
block $\pi(j)$ is not a pair either if $j
\in\{s_1,t_1,\dots,s_p,t_p\}$ or if $j$ belongs to a block of
$\pi{\left|_{\{s_i+1,\dots t_i-1\}}\right.}$ which is not a pair for
some $1\leq i\leq p$. If $k_+(i)<k_-(i+1)$ for some $i$ then we are
done since $2p+\sum_{i=1}^p 2k_+(i)-2k_-(i)-2 \leq 2k-4$. To conclude
the proof we thus have to check that if $k_+(i)=k_-(i+1)$ for any
$1\leq i <p$ then there are at least $2$ elements of
$\{s_1,t_1,\dots,s_p,t_p\}$ that belong to a pair of $\pi$. But this
amounts to showing that a non-crossing partition of $[p]$ such that $i
\nsim i+1$ for any $1 \leq i <p$ contains at least one singleton,
which is clear.
\end{proof}

The following Lemma is also an easy extention of Lemma
\ref{lemma=symmetrization}. Remember that the partitions $\sigma_l^{(d,m)}$
and $\widetilde \sigma_l^{(d,m)}$ are defined in Corollary
\ref{corollary=symmetrization_for_n}:
\begin{lemma} \label{lemma=symmetrization_forNCdm}Fix integers $d$ and $m$.

  For any $k \in [2m]$ and $\pi \in NC(d,m)$ the partition $P_{kd}(\pi)$
  also belongs to $NC(d,m)$.

  Let $k \in \N$ such that $2^{k} \geq m$. Then for any partition $\pi \in
  NC(d,m)$, the partition $\pi_k = P_{2^k} P_{2^{k-1}} \dots P_{2} P_{1}
  P_{m} (\pi)$ is one of the $2d+1$ partitions $\sigma_l^{(d,m)}$ for $0
  \leq l \leq d$ or $\widetilde \sigma_l^{(d,m)}$ for $1 \leq l \leq d$.
\end{lemma}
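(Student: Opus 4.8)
The plan is to verify the two assertions separately, the first as a stability statement and the second as the analogue of Lemma \ref{lemma=symmetrization}.

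\emph{First assertion.} For $\pi'=P_{kd}(\pi)$ I would check the three defining properties of $NC(d,m)$. Non-crossingness is the first statement of Lemma \ref{lemma=properties_of_symmetrization_forNCnm}, and the evenness of the blocks follows exactly as quoted there: since the equation $2kd+1-i=i$ has no solution, $s_{kd}$ is fixed-point-free, so every block of the $s_{kd}$-symmetric partition $\pi'$ is either self-symmetric, hence of even size $2|C\cap I_{kd}|$, or is a non-self-symmetric block contained in $I_{kd}$, which by the rules defining $P_{kd}$ must be a full block of $\pi$ and therefore even. The genuinely new point is the third property, which by Lemma \ref{lemma=characterization_of_NCdm} amounts to showing $i\sim_{\pi'}i+1\Rightarrow d\mid i$. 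Here the key observation is that $I_{kd}=\{(k-m)d+1,\dots,kd\}$ is a union of full intervals $J_j$, so its two boundary gaps fall after the multiples $(k-m)d$ and $kd$ of $d$, and that $s_{kd}$ sends the gap after $i$ to the gap after $2kd-i$, hence preserves divisibility by $d$. A three-case analysis then finishes it: if $i,i+1\in I_{kd}$ then $i\sim_{\pi'}i+1$ iff $i\sim_\pi i+1$; if $i,i+1$ lie in the complement then $i\sim_{\pi'}i+1$ iff $s_{kd}(i+1)\sim_\pi s_{kd}(i+1)+1$, forcing $d\mid 2kd-i$ and hence $d\mid i$; and if the pair straddles the boundary of $I_{kd}$ then $i\in\{(k-m)d,kd\}$ is a multiple of $d$ outright.

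\emph{Second assertion.} By the first assertion each $P_{2^jd}$ and $P_{md}$ preserves $NC(d,m)$ (the symmetrizations being understood about the multiples of $d$, as in Corollary \ref{corollary=symmetrization_for_n}), so $\pi_k\in NC(d,m)$, while the target partitions $\sigma_l^{(d,m)},\widetilde\sigma_l^{(d,m)}$ lie in $NC^*(d,m)\subset NC(d,m)$. The obstacle is that, unlike in Corollary \ref{corollary=symmetrization_for_n}, a partition of $NC(d,m)$ may join elements of different labels, so the identity $P_{kd}(\pi)\!\left|_{A_i}\right.=P_k(\pi\!\left|_{A_i}\right.)$ of Lemma \ref{lemma=properties_of_symmetrization_forNCnm} fails and one cannot reduce to Lemma \ref{lemma=symmetrization} label by label. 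Instead I would run directly the inductive mechanism of the proof of Lemma \ref{lemma=symmetrization} on $[2dm]$: starting from $P_{md}(\pi)$, which is $s_{md}$-symmetric, show by induction on $j$ that applying $P_d,P_{2d},\dots,P_{2^jd}$ doubles a fully symmetric block pattern emanating from the intervals adjacent to the symmetry axis, exactly as the pairs proliferate in Lemma \ref{lemma=symmetrization}. Once $2^j\geq m$ the pattern fills $[2dm]$; and because the pattern being propagated is itself invariant under the previously used reflections, it is never destroyed, so the outcome is invariant under every $s_{id}$ (equivalently, under the reflection $s_d$ and the translation $s_ds_{2d}$ by $2d$, which together generate all the $s_{id}$).

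It then remains to identify the $s_{id}$-invariant members of $NC(d,m)$, and I expect this to be the main obstacle, precisely because it is the step where the label decomposition is unavailable. I would argue that a non-crossing partition invariant under the order-$m$ rotation by $2d$ is extremely rigid: up to the reflection $s_d$ its blocks are forced to be the parallel pairs appearing in $\sigma_l^{(d,m)}$, with the single possible exception of one label $l$ whose $2m$ elements form a single block, giving $\widetilde\sigma_l^{(d,m)}$. The even-cardinality condition then excludes the all-singletons partition $0_{2dm}$, and the non-crossing condition pins down that the $c_m$-type labels must precede the $r_m$-type ones, by the same monotonicity argument that closes the proof of Corollary \ref{corollary=symmetrization_for_n}. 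Finally, the last sentence of Corollary \ref{corollary=symmetrization_for_n}, asserting that each $\sigma_l^{(d,m)}$ and $\widetilde\sigma_l^{(d,m)}$ is fixed by every $P_{id}$, provides the converse and confirms that the list is exactly these $2d+1$ partitions.
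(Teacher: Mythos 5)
Your treatment of the first assertion is correct and complete: it supplies exactly the verification the paper dismisses as ``straightforward''. Reducing the different-intervals condition to ``$i\sim_{\pi'}i+1\Rightarrow d\mid i$'' via Lemma~\ref{lemma=characterization_of_NCdm}, and checking it in three cases using the fact that both endpoints of $I_{kd}$ sit at multiples of $d$, is sound. Your observation that the identity $P_{kd}(\pi){\left|_{A_i}\right.}=P_{k}(\pi{\left|_{A_i}\right.})$ breaks down on $NC(d,m)$ is also correct, and it is precisely why this lemma cannot be deduced label by label from Corollary~\ref{corollary=symmetrization_for_n}.

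The gap is in the second assertion, at the sentence ``because the pattern being propagated is itself invariant under the previously used reflections, it is never destroyed, so the outcome is invariant under every $s_{id}$''. Neither reading of this survives inspection. The intermediate partitions genuinely lose the symmetries installed earlier: take $d=2$, $m=3$ and $\pi=\left\{\{1,6\},\{2,3\},\{4,5\},\{7,12\},\{8,9\},\{10,11\}\right\}\in NC(2,3)$ (note that $\{1,6\}$ joins the two labels). Then $P_6(\pi)=\pi$ is $s_6$-invariant, but $P_2P_6(\pi)=\left\{\{1,4\},\{2,3\},\{5,12\},\{6,7\},\{8,9\},\{10,11\}\right\}$ is no longer $s_6$-invariant, since $s_6(\{1,4\})=\{9,12\}$ is not a block; the $s_6$-symmetry reappears only at the last step, when the limit $\sigma_0^{(2,3)}$ is reached. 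Likewise the growing local pattern is not invariant under the earlier reflections while it grows; it is symmetric only about its own axes and becomes fully symmetric only once it fills $[2dm]$. So full dihedral invariance cannot be obtained by soft bookkeeping of symmetries: the doubling induction can be formulated and closed only if one knows the concrete shape of the blocks being propagated, and that is the input your proposal omits. It is exactly what the paper's proof supplies: for each $i\in\{1,\dots,d\}$ one records whether $\pi(i)$ meets $\{1,\dots,dm\}\setminus\{i\}$ and whether it meets $\{dm+1,\dots,2dm\}$ (both cannot be empty since blocks of $\pi$ have at least two elements, and non-crossingness together with the different-intervals condition forces these case-vectors into exactly $2d+1$ monotone configurations); each configuration then drives a concrete induction as in Lemma~\ref{lemma=symmetrization}, whose limit is read off to be the corresponding $\sigma_l^{(d,m)}$ or $\widetilde\sigma_l^{(d,m)}$. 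Once this is done the limit is identified outright, so your final step --- classifying the $s_{id}$-invariant members of $NC(d,m)$ --- is no longer needed. That classification is in fact true, and your rigidity sketch for it is essentially right (rotation-invariance confines every non-central block to an arc of $2d$ consecutive elements, where evenness and the different-intervals condition force it to be a pair, and a central block is forced to be a whole $A_l$); but in your architecture it rests entirely on the full-symmetry claim, which is the step that is missing.
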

\begin{proof} The first point is straightforward.

  The proof of the second point is the same as Lemma
  \ref{lemma=symmetrization}: depending on the fact that
  $\{1,2,\dots,dm\}\cap \pi(i) \setminus \{i\}$ and $\{dm+1,\dots,2dm\}\cap
  \pi(i)$ are empty or not for $i=1,\dots ,d$, we prove by induction on $k$
  that $\pi_k$ has the right properties. The details are left to the
  reader.
\end{proof}

\section{Inequalities}
For any partition $\pi$ of $[2N]$, and any $k=(k_1,\dots ,k_{2N})\in
\N^{2N}$, we write $k \respecte \pi$ if for any $i,j \in [2N]$ such that $i
\sim_\pi j$, $k_i=k_j$.

Let $a=(a_k)_{k \in \N^N}$ be a finitely supported family of matrices. For
any $k=(k_1,\dots ,k_N) \in \N^N$ let $\widetilde a_k =
a_{(k_N,k_{N-1},\dots ,k_1)}$.

For such $a$ and for a partition $\pi$ of $[2N]$, we denote by
$S(a,\pi,N,1)$ the following quantity:
\begin{equation}
\label{eq=def_of_SapiN1}
S(a,\pi,N,1) = \sum_{k,l \in \N^N, (k,l) \respecte \pi} Tr(a_{k} \widetilde
a_l^*).
\end{equation}

More generally for integers $m,d$, for a finitely supported family of
matrices $a= (a_k)_{k \in \N^d}$ and a partition $\pi$ of $[2dm]$, we
define
\begin{equation}
\label{eq=def_of_S_apidm}
  S(a,\pi,d,m) = \sum_{k_1,\dots ,k_{2m} \in \N^d, (k_1,\dots, k_{2m})
    \respecte \pi} Tr(a_{k_1} \widetilde a_{k_2}^* a_{k_3} \dots
    a_{k_{2m-1}} \widetilde a_{k_{2m}}^* ).
\end{equation}
In this equation and in the rest of the paper an element $k=(k_1,\dots,
k_{2m}) \in (\N^d)^{2m}$ is identified with an element of
$\N^{2dm}$. Therefore the expression $k \respecte \pi$ has a meaning for
$\pi \in NC(2dm)$.

The following application of the Cauchy-Schwarz inequality is what
motivates the introduction of the operations $P_k$ on the partitions of
$[2N]$. The same use of the Cauchy-Schwarz inequality has been made in the
second part of \cite{MR1812816}.
\begin{lemma}\label{lemma=CauchySchwarz}
  For a partition $\pi$ of $[2N]$ and a finitely supported family of
  matrices $a=(a_k)_{k \in \N^N}$,
\[ \left|S(a,\pi,N,1)\right| \leq \left(S(a,P_0(\pi),N,1)\right)^{1/2}
\left(S(a,P_N(\pi),N,1)\right)^{1/2}.\]

More generally for a partition $\pi$ of $[2dm]$, for a finitely supported
family of matrices $a= (a_k)_{k \in \N^d}$ and any integer $i$
\begin{equation}
\label{eq=CauchySchwarz}
 \left|S(a,\pi,d,m)\right| \leq \left(S(a,P_{di}(\pi),d,m)\right)^{1/2}
\left(S(a,P_{(m+i)d}(\pi),d,m)\right)^{1/2}.\end{equation}
\end{lemma}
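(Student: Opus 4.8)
The plan is to prove the general inequality \eqref{eq=CauchySchwarz} directly by viewing the trace expression $S(a,\pi,d,m)$ as an inner product and applying Cauchy-Schwarz, then recognizing the two factors as $S$ evaluated at the two symmetrized partitions $P_{di}(\pi)$ and $P_{(m+i)d}(\pi)$. The first (scalar-index) statement is the special case $d=1$, $m=1$, $i=0$ of the general formula, so I would focus entirely on \eqref{eq=CauchySchwarz}.

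First I would fix the integer $i$ and cut the cyclic product defining $S(a,\pi,d,m)$ at the two antipodal points $di$ and $(m+i)d$. Recall $I_{di}$ is the interval of length $md$ ending at $di$, and the symmetry $s_{di}$ exchanges $I_{di}$ with its complement. The idea is that the index vector $k=(k_1,\dots,k_{2m})\in(\N^d)^{2m}$, subject to $k\respecte\pi$, splits into the restriction of the indices to $I_{di}$ and to $[2dm]\setminus I_{di}$. Writing the trace $Tr(a_{k_1}\widetilde a_{k_2}^*\cdots\widetilde a_{k_{2m}}^*)$ as $Tr(XY)$ where $X$ collects the $md$ consecutive matrix factors whose indices lie in one half and $Y$ collects the other half, I would express $S(a,\pi,d,m)$ as $\sum_{\text{boundary indices}} Tr\bigl(X(\alpha)\,Y(\beta)\bigr)$, i.e.\ as a bilinear form in the family of partial products indexed by the constraints that $\pi$ imposes across the cut.

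Next I would introduce, for each assignment of the indices on the separating boundary, the partial products $u=X$ and $v=Y^*$ as elements of a suitable matrix space (or rather of $\ell^2$-tuples of matrices indexed by the cross-cut variables), so that $S(a,\pi,d,m)=\langle u,v\rangle$ for the Hilbert-Schmidt inner product $\langle u,v\rangle = Tr(u\,v^*)$ summed over the free boundary indices. Applying the Cauchy-Schwarz inequality $|\langle u,v\rangle|\le \langle u,u\rangle^{1/2}\langle v,v\rangle^{1/2}$ then gives $|S(a,\pi,d,m)|\le \langle u,u\rangle^{1/2}\langle v,v\rangle^{1/2}$. The point of Definition \ref{def=intervals_symmetries} is precisely that $\langle u,u\rangle$ —being $u$ paired against its own adjoint— is exactly the sum over indices respecting the partition that keeps the $I_{di}$-half as in $\pi$ and reflects the other half via $s_{di}$, which is by construction $S(a,P_{di}(\pi),d,m)$; symmetrically $\langle v,v\rangle=S(a,P_{(m+i)d}(\pi),d,m)$, since reflecting about the antipodal point $(m+i)d$ doubles the complementary half. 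I would verify this identification block by block against the three defining relations \eqref{eq=intervals_symmetries} of $P_k$.

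\textbf{The main obstacle} I expect is the bookkeeping needed to confirm that self-pairing $u$ against $u^*$ reproduces exactly the symmetrized partition $P_{di}(\pi)$, including the subtle third clause of Definition \ref{def=intervals_symmetries} (where $i\sim_{\pi'}j$ across the cut requires both $i\sim_\pi s_k(j)$ \emph{and} the existence of a companion $l\notin I_k$ with $i\sim_\pi l$). Concretely, one must check that when an index $k_r$ on the boundary is shared by blocks straddling the cut, the squared norm $\langle u,u\rangle$ correctly glues the reflected copy to itself exactly along the relations prescribed by $s_{di}$, and that the alternating pattern of $a$'s and $\widetilde a^*$'s —together with the definition $\widetilde a_k=a_{(k_d,\dots,k_1)}$ reversing the $d$-tuple— is compatible with the reflection $s_{di}$ on the level of the fine index $[2dm]$ rather than just the coarse index $[2m]$. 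Once this matching is established the inequality is immediate, so the entire content lies in translating the analytic Cauchy-Schwarz step into the combinatorial symmetrization $P_{di}$, which is the motivating remark made just before the lemma.
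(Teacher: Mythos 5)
Your plan is correct and is essentially the paper's own argument: the paper too decomposes $[2N]$ into the union $A$ of blocks interior to one half of the cut, the union $B$ of blocks interior to the other half, and the straddling part $C$, writes $S$ as a bilinear form over the boundary indices, applies Cauchy-Schwarz (trace Cauchy-Schwarz pointwise plus scalar Cauchy-Schwarz over the boundary sum, which is your single inner-product step in $\ell^2(\mathrm{boundary})$ tensored with Hilbert-Schmidt), and then identifies the two self-pairings with $S$ at the two reflected partitions -- the only organizational difference being that the paper carries this out for the two-factor case $m=1$ and then reduces the general case to it by grouping the $2m$ factors into $\beta_k = a_{k_1}\widetilde a_{k_2}^*\cdots$ (checking $\widetilde\beta_k^* = \widetilde a_{k_1}^* a_{k_2}\cdots$) and using trace cyclicity to move the cut for general $i$, whereas you inline that reduction by cutting directly at $di$ and $(m+i)d$. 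One small correction: the first statement of the Lemma is the specialization $d=N$, $m=1$, $i=0$ of \eqref{eq=CauchySchwarz} (partition of $[2N]$, family indexed by $\N^N$), not $d=1$, $m=1$, $i=0$, which would concern only partitions of $[2]$.
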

\begin{proof}
  The second statement for $i=0$ follows from the first one by replacing
  $N$ by $dm$. Indeed for any and $k=(k_1,\dots,k_m) \in (\N^d)^m \simeq
  \N^{dm}$, denote $\beta_k = a_{k_1} \widetilde a_{k_2}^* a_{k_3} \dots
  a_{k_m}$ if $m$ is odd and $\beta_k=a_{k_1} \widetilde a_{k_2}^* a_{k_3}
  \dots \widetilde a_{k_m}^*$ if $m$ is even. We claim that $S(a,\pi,d,m) =
  S(\beta,\pi,dm,1)$. We give a proof when $m$ is odd, the case when $m$ is
  even is similar. It is enough to prove that if $k=(k_1,\dots,k_m) \in
  (\N^d)^m$ then $\widetilde \beta_k^* = \widetilde a_{k_1}^* a_{k_2} \dots
  \widetilde a_{k_m}^*$. But if $r:\N^d\to\N^d$ denotes the map
  $r(s_1,\dots,s_d)=(s_d,\dots,s_1)$ we have that
\begin{eqnarray*}
  \widetilde \beta_k^* & = & \beta_{r(k_m),\dots,r(k_1)}^*= \big(a_{r(k_m)}
  \dots \widetilde a_{r(k_2)}^* a_{r(k_1)}\big)^*\\
  & = & a_{r(k_1)}^* \widetilde a_{r(k_2)} \dots a_{r(k_m)}^*\\
  & = & \widetilde a_{k_1}^* a_{k_2} \dots
  \widetilde a_{k_m}^*.
\end{eqnarray*}

For a general $i$ the following argument based on the trace property allow
to reduce to the case $i=0$: for a partition $\pi$ of $[2dm]$ and any $n
\in [2dm]$ denote $\tau_{n}(\pi)$ the partition such that $s
\sim_{\tau_{n}(\pi)} t$ if and only if $s+n \sim_\pi t+n$, so that
$P_{n+k}(\pi) = (\tau_{n}^{-1} \circ P_k \circ \tau_n)(\pi)$ for any
integer $k$. Moreover by the trace property $S(a,\pi,d,m) =
S(a,\tau_{di}(\pi),d,m)$ if $n$ is even and $S(a,\pi,d,m) = S(\widetilde
a^*,\tau_{di}(\pi),d,m)$ if $i$ is even (here $\widetilde a^*$ denotes the
family $(\widetilde a_k^*)_{k \in \N^d}$). Therefore if one assumes that
the inequality \eqref{eq=CauchySchwarz} is satisfied for any $\pi$ and any
$a$ but only for $i=0$, then we can deduce it for a general $i$ in the
following way. Denote $b = (a_k)_{k \in \N^d}$ if $i$ is even and $b =
(\widetilde a_k^*)_{k \in \N^d}$ if $i$ is odd and :
\begin{eqnarray*}
|S(a,\pi,d,m)|^2 &=& |S(b,\tau_{di}(\pi),d,m)|^2\\
& \leq & S(b,P_0(\tau_{di}(\pi)),d,m) S(b,P_{dm}(\tau_{di}(\pi)),d,m)\\
& = & S(b,\tau_{di}(P_{di}(\pi)),d,m) S(b,\tau_{di}(P_{dm+di}(\pi)),d,m)\\
&=& S(a,P_{di}(\pi),d,m) S(a,P_{(m+i)d}(\pi),d,m)
\end{eqnarray*}

  We now prove the first statement. We take the same notation as in
  Definition \ref{def=intervals_symmetries}.

  Let us clarify the notation for the rest of the proof. In the whole
  proof, for a set $X$ we see a $k \in \N^X$ as a function from $X$ to
  $\N$, and for an integer $N$ we will identify $\N^N$ with $\N^{[N]}$. In
  particular, if $X$ and $Y$ are disjoint subsets of a set $Z$, and if
  $k\in \N^X$ and $l \in \N^Y$, $[k,l]$ will denote the element of $\N^{X
    \cup Y}$ corresponding to the function on $X \cup Y$ that has $k$ as
  restriction to $X$ and $l$ as restriction to $Y$.
  
  Let us denote by $A$ the union of the blocks of $\pi$ that are contained
  in $I_N=\{1,\dots N\}$, by $B$ the union of the blocks of $\pi$ that are
  contained in $[2N] \setminus I_N = \{N+1,\dots, 2N\} = I_{2N}$ and by $C$
  the rest of $[2N]$. In the following equations, $s$ will vary in $\N^A$,
  $t$ in $\N^{I_N\setminus A}$, $u$ in $\N^B$ and $v$ in $\N^{I_{2N}
    \setminus B}$. For such $s$,$t$,$u$ and $v$ and with the previous
  notation, $[s,t,u,v] \respecte \pi$ if and only if $s \respecte
  \pi{\left|_A\right.}$, $[t,v] \respecte \pi{\left|_C\right.}$ and $u
  \respecte \pi{\left|_B\right.}$. For $k\in \N^{I_{2N}}$ (\emph{i.e.} $k$
  is a function $k:I_{2N}\to \N$), we will also abusively denote
  $\widetilde a_k \egdef \widetilde a_{(k(N+1),\dots,k(2N))}$. With this
  notation the definition in \eqref{eq=def_of_SapiN1} becomes
\begin{eqnarray*}
S(a,\pi,N,1) &=& \sum_{\begin{array}{c}s \in \N^A,t\in \N^{I_N \setminus A},u
  \in \N^B, v \in \N^{I_{2N} \setminus B}\\ {[}s,t,u,v{]} \respecte \pi
\end{array}} Tr(a_{[s,t]} \widetilde a_{[u,v]}^*)\\
& = & \sum_{\begin{array}{c} t, v  \\ {[}t,v{]}\respecte \pi{\left|_C\right.}
      \end{array}} 
Tr\left( \big(\sum_{s \respecte \pi{\left|_A\right.}}a_{[s,t]} \big) 
\big(\sum_{u \respecte \pi{\left|_B\right.}}\widetilde a_{[u,v]}\big)^*
\right).
\end{eqnarray*}
Thus 
\[\left|S(a,\pi,N,1) \right| \leq \sum_{{[}t,v{]}\respecte
  \pi{\left|_C\right.}} \left|Tr\left( \big(\sum_{s \respecte
\pi{\left|_A\right.}}a_{[s,t]} \big) \big(\sum_{u \respecte
\pi{\left|_B\right.}}\widetilde a_{[u,v]}\big)^* \right)  \right|.\]
Applying the Cauchy-Schwarz inequality for the trace, we get
\[ \left|S(a,\pi,N,1) \right| \leq \sum_{{[}t,v{]}\respecte
  \pi{\left|_C\right.}} \left\| \sum_{s \respecte
\pi{\left|_A\right.}}a_{[s,t]} \right\|_2  \left\|\sum_{u \respecte
\pi{\left|_B\right.}}\widetilde a_{[u,v]} \right\|_2.  \]
The classical Cauchy-Schwarz inequality yields
\[\left|S(a,\pi,N,1) \right| \leq (1)^{1/2} (2)^{1/2} \]
where 
\begin{eqnarray*}
(1) &=& \sum_{{[}t,v{]}\respecte \pi{\left|_C\right.}} \left\| \sum_{s
  \respecte \pi{\left|_A\right.}}a_{[s,t]} \right\|_2^2\\
(2) & = & \sum_{{[}t,v{]}\respecte \pi{\left|_C\right.}} \left\| \sum_{u
  \respecte \pi{\left|_B\right.}}\widetilde a_{[u,v]} \right\|_2^2. 
\end{eqnarray*}
We claim that $(1) = S(a,P_N(\pi),N,1)$ and $(2) = S(a,P_0(\pi),N,1)$. We
only prove the first equality, the second is proved similarly (or follows
from the first). But
\begin{eqnarray*}
(1) &=& \sum_{{[}t,v{]}\respecte \pi{\left|_C\right.}} \left\| \sum_{s
  \respecte \pi{\left|_A\right.}}a_{[s,t]} \right\|_2^2\\
&=& \sum_{{[}t,v{]}\respecte \pi{\left|_C\right.}} Tr\left( \big(\sum_{s \respecte
\pi{\left|_A\right.}}a_{[s,t]} \big) \cdot \big(\sum_{s \respecte
\pi{\left|_A\right.}}a_{[s,t]} \big)^*\right)\\
&=&  Tr\left( \sum_{{[}t,v{]}\respecte \pi{\left|_C\right.}} \sum_{s \respecte
\pi{\left|_A\right.}} \sum_{s' \respecte
\pi{\left|_A\right.}}a_{[s,t]} a_{[s',t]}^* \right)\\
&=&  Tr\left( \sum_{{[}t,v{]}\respecte \pi{\left|_C\right.}} \sum_{s \respecte
\pi{\left|_A\right.}} \sum_{s' \respecte
\pi{\left|_A\right.}}a_{[s,t]} \widetilde a_{r([s',t])}^* \right),
\end{eqnarray*}
where on the last line for any $k = (k_{1},\dots, k_{N}) \in
\N^{I_{N}}$, $r(k) \in \N^{I_{N}}$ is defined by $r(k) = (k_{N},
k_{N-1} ,\dots, k_{1})$. 

By definition of $B$, for any $j \in I_{2N} \setminus B$ there is
$i \in I_N \setminus A$ such that $i \sim_\pi j$. Thus for any $t \in
\N^{I_N \setminus A}$ there is exactly one or zero $v \in\N^{I_{[2N]}
\setminus B}$ such that ${[}t,v{]} \respecte \pi_C$, depending whether $t
\respecte \pi_{I_N \setminus A}$ or not.

The claim that $(1) = S(a,P_N(\pi),N,1)$ thus follows from the observation
that for $k,l \in \N^N$, $(k,l) \respecte P_N(\pi)$ if and only there are
$s,s'\in \N^A$ and $t \in \N^{I_N \setminus A}$ such that $k=[s,t]$,
$l=r([s',t])$ and $s\respecte \pi{\left|_A\right.}$, $s'\respecte
\pi{\left|_A\right.}$ and $t\respecte \pi{\left|_{I_N \setminus
      A}\right.}$.
\end{proof}

We now have to observe that the quantities $S(a,\sigma_l^{(d,m)},d,m)$ for
$l=0,\dots, d$ and $S(a, \widetilde \sigma_l^{(d,m)},d,m)$ for $l=0,\dots, d$
have simple expressions.

A (finitely supported) family of matrices $a=(a_k)_{k \in \N^d}$ can be
made in various natural ways into a bigger matrix, for any decomposition of
$\N^d \simeq \N^l \times \N^{d-l}$.  If the $a_k$'s are viewed as operators
on a Hilbert space $H$ ($H=\C^\alpha$ if the $a_k$'s are in
$M_\alpha(\C)$), then let us denote by $M_l$ the operator from $H \otimes
\ell^2(\N)^{\otimes {d-l}}$ to $H \otimes \ell^2(\N)^{\otimes l}$ having
the following block-matrix decomposition:
\[\left(a_{[s,t]}\right)_{s\in \N^{\{1,\dots, l\}},t\in \N^{\{l+1,\dots,
 d\}}}.\]

Note that since $(a_k)$ has finite support, the above matrix has only
finitely many nonzero entries, and hence corresponds to a finite rank
operator. In particular, it belongs to $S_p\left (H \otimes \ell^2(\N)^{\otimes
{d-l}}; H \otimes \ell^2(\N)^{\otimes l}\right)$ for any $p \in (0,\infty]$.

\begin{lemma} 
\label{lemma=identifications} 
  Let $d$, $m$, $a=(a_k)_{k \in \N^d}$ and $M_l$ as above, and $\sigma_l$
  and $\widetilde \sigma_l$ defined in Corollary
  \ref{corollary=symmetrization_for_n}. Then for $l\in\{0,1,\dots, d\}$:
    \[ S(a,\sigma_l^{(d,m)},d,m) = \left\|M_l\right\|_{S_{2m}\left
        (H \otimes \ell^2(\N)^{\otimes {d-l}}; H \otimes \ell^2(\N)^{\otimes l}\right) }^{2m}.\]
Moreover for $l \in \{1,\dots, d\}$
\[ S(a,\widetilde \sigma_l^{(d,m)},d,m) \leq
\left\|M_l\right\|_{S_{2m}\left (H \otimes \ell^2(\N)^{\otimes {d-l}}; H
    \otimes \ell^2(\N)^{\otimes l}\right) }^{2m}.\]
\end{lemma}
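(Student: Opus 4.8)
The plan is to directly unfold the definition of $S(a,\pi,d,m)$ for the specific symmetric partitions $\sigma_l^{(d,m)}$ and $\widetilde\sigma_l^{(d,m)}$ and recognize the result as a Schatten norm. Recall from Corollary \ref{corollary=symmetrization_for_n} that $\sigma_l^{(d,m)}$ is defined block-by-block on the label classes $A_i$: it restricts to $c_m$ on $A_1,\dots,A_l$ and to $r_m$ on $A_{l+1},\dots,A_d$. The partitions $c_m$ and $r_m$ are the two ways of pairing up consecutive elements, so $\sigma_l^{(d,m)}$ is a pairing of $[2dm]$. First I would translate the constraint $(k_1,\dots,k_{2m})\respecte \sigma_l^{(d,m)}$ into explicit equalities among the coordinates of the $k_j\in\N^d$, using the labelling: within each label class the pairing is either ``$r$-type'' (pair $2j{-}1$ with $2j$) or ``$c$-type'' (pair $2j$ with $2j{+}1$), and the first $l$ labels behave oppositely to the last $d{-}l$. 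The upshot should be that the $2m$-tuple of multi-indices factors into an independent left part (coordinates $1,\dots,l$) and right part (coordinates $l{+}1,\dots,d$), coupled so that the trace becomes a trace of a product $M_l^{}M_l^*M_l^{}M_l^*\cdots$ with $m$ factors of each.

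**Identifying the trace as a Schatten norm.** The block-matrix $M_l = (a_{[s,t]})_{s\in\N^l,\,t\in\N^{d-l}}$ acts from $H\otimes\ell^2(\N)^{\otimes(d-l)}$ to $H\otimes\ell^2(\N)^{\otimes l}$. The key step is to check that the pairing pattern of $\sigma_l^{(d,m)}$ makes the summation in \eqref{eq=def_of_S_apidm} contract exactly the indices that are summed in the matrix products $(M_l M_l^*)^m$, whence by definition of the Schatten $2m$-norm one has
\[
S(a,\sigma_l^{(d,m)},d,m) = Tr\big((M_l M_l^*)^m\big) = \|M_l\|_{S_{2m}}^{2m}.
\]
Here the alternation $a_{k_1}\widetilde a_{k_2}^* a_{k_3}\cdots$ in the definition of $S$ provides precisely the alternation $M_l, M_l^*, M_l,\dots$, provided one uses the relation $\widetilde a_k = a_{r(k)}$ (with $r$ the coordinate reversal) to reconcile the ``tilde'' convention with the untilded block matrix; tracking this reversal carefully on each of the two coordinate groups is the one place that demands attention. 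The boundary cases $l=0$ and $l=d$ (where $M_l$ degenerates to a row or column, i.e.\ all labels are $r$-type or all $c$-type) should be verified to fit the same formula.

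**The inequality for $\widetilde\sigma_l$.** For the second assertion I would observe that $\widetilde\sigma_l^{(d,m)}$ differs from $\sigma_l^{(d,m)}$ only on the single label class $A_l$, where the pairing is replaced by the full block $1_{2m}$ (all $2m$ elements of $A_l$ in one block). This extra coarseness imposes \emph{additional} equalities among the indices beyond those forcing the clean matrix product, so $S(a,\widetilde\sigma_l^{(d,m)},d,m)$ is a constrained sub-sum (or a partial contraction) of the expression computing $\|M_l\|_{S_{2m}}^{2m}$. The plan is to write $S(a,\widetilde\sigma_l^{(d,m)},d,m)$ as $Tr\big((N N^*)^m\big)$-type object after the extra identification, or more directly to exhibit it as $\sum_j \|v_j\|^2$-style quantity bounded by the unconstrained sum, and then invoke a Cauchy--Schwarz / positivity argument to get the $\leq$. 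The cleanest route is probably to note that fixing all the $A_l$-indices equal amounts to evaluating a diagonal-type restriction, and that for a positive operator such a restriction cannot exceed the full trace $Tr((M_lM_l^*)^m)$.

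**Main obstacle.** The genuinely fiddly part will be the bookkeeping in the first step: correctly reading off, from the definitions of $c_m$, $r_m$ and the label-dependent orientation in Corollary \ref{corollary=symmetrization_for_n}, exactly which coordinates of which $k_j$ are identified, and then matching this—through the reversal map $r$ and the $*$/tilde alternation in \eqref{eq=def_of_S_apidm}—to the index contraction defining $Tr((M_lM_l^*)^m)$. Once that dictionary is established the Schatten-norm identity is essentially automatic, and the $\widetilde\sigma_l$ inequality follows by a short positivity argument.
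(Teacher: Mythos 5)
Your plan for the equality $S(a,\sigma_l^{(d,m)},d,m)=\|M_l\|_{S_{2m}}^{2m}$ is essentially the paper's own argument: unfold the constraint $(k_1,\dots,k_{2m})\respecte\sigma_l^{(d,m)}$ coordinate by coordinate, reconcile the tilde convention with the reversal map $r$, and match the resulting index contraction with $Tr\big((M_lM_l^*)^m\big)$. The paper organizes exactly this bookkeeping by introducing the row matrices $A_s=(a_{s,t})_{t\in\N^{d-l}}$ and checking that the tuples $k\respecte\sigma_l^{(d,m)}$ parametrize precisely the terms of $\sum_{s^{(1)},\dots,s^{(m)}}\prod_i A_{s^{(i)}}A_{s^{(i+1)}}^*$; your outline would go through for that half.

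The gap is in the second assertion. You correctly identify that the extra coarseness of $\widetilde\sigma_l^{(d,m)}$ (all indices with label $l$ forced equal) turns $S(a,\widetilde\sigma_l^{(d,m)},d,m)$ into the constrained sub-sum $\sum_{j}Tr(B_j^m)$, where $B_j=\sum_{s\in\N^{l-1}}A_{(s,j)}^*A_{(s,j)}\geq 0$ and $\sum_j B_j=M_l^*M_l$. But your justification --- that a constrained sub-sum of ``positive'' terms is bounded by the unconstrained sum, or that ``for a positive operator such a restriction cannot exceed the full trace'' --- does not hold up. First, the restriction \emph{preserves} the trace (it is a pinching, which is trace-preserving); what must decrease is the trace of the $m$-th \emph{power}. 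Second, the cross terms discarded when passing from $Tr\big((\sum_j B_j)^m\big)$ to $\sum_j Tr(B_j^m)$ are traces of products of $m$ positive operators, and for $m\geq 3$ these are \emph{not} individually non-negative (one can have $Tr(ABC)<0$ for positive semidefinite $A,B,C$), so term-wise positivity proves nothing. The inequality $\sum_j Tr(B_j^m)\leq Tr\big((\sum_j B_j)^m\big)$ is true but is a genuine result: it is exactly the paper's Lemma \ref{lemma=inequality_by_interpolation}, which the paper proves by complex interpolation (the map sending a column $(X_1,\dots,X_N)$ to the block-diagonal matrix $\mathrm{diag}(X_1,\dots,X_N)$ is an $S_2$-isometry and an $S_\infty$-contraction, hence an $S_p$-contraction for $2\leq p\leq\infty$). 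Equivalently, it is the pinching (McCarthy) inequality $\|\mathcal{C}(X)\|_m\leq\|X\|_m$ applied to $X=M_lM_l^*$, pinched along the $l$-th $\ell^2(\N)$ factor, which one can also prove by writing the pinching as an average of unitary conjugations. Your proof needs some such argument to be stated and proved; as written, the final step would fail.
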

\begin{rem}
It is also true that
\[ S(a,\widetilde \sigma_l^{(d,m)},d,m) \leq
\left\|M_{l-1}\right\|_{S_{2m}\left (H \otimes \ell^2(\N)^{\otimes {d-l}};
    H \otimes \ell^2(\N)^{\otimes l}\right) }^{2m},\] but we will only use
the inequality stated in the lemma. This inequality follows from the one
stated by conjugating by the rotation $k \in [2dm] \mapsto k+d$.
\end{rem}
\begin{proof}
  We fix $l \in \{0,\dots, d\}$. For any $s = (s_1,\dots, s_l) \in \N^l$ we
  denote by $A_s=(a_{s,t})_{t \in \N^{d-l}}$ viewed as a row matrix. As an
  operator, $A_s$ thus acts from $H \otimes \ell^2(\N)^{\otimes {d-l}}$ to
  $H$. For $s,s'\in \N^l$, if $r(k_1,\dots, k_d) = (k_d ,\dots, k_1)$
\[A_s A_{s'}^* = \sum_{t \in \N^{d-l}} a_{s,t} a_{s',t}^* = \sum_{t \in
  \N^{d-l}} a_{s,t} \widetilde a_{r(s',t)}^*.\]
Hence for $s^{(1)},s^{(2)},\dots, s^{(m)} \in \N^l$, if $s^{(m+1)}=s^{(1)}$,
\[ \prod_{i=1}^m A_{s^{(i)}} A_{s^{(i+1)}}^* = \sum_{t^{(1)},\dots, t^{(m)}
  \in \N^{d-l}} a_{s^{(1)},t^{(1)}} \widetilde a_{r(s^{(2)},t^{(1)})}^*
a_{s^{(2)},t^{(2)}} \widetilde a_{r(s^{(3)},t^{(2)})}^* \dots
 \widetilde a_{r(s^{(1)},t^{(m)})}^*.\]

 But for $k \in \N^{[2dm]}$, $k \respecte \sigma_l^{(d,m)}$ if and only if
 there exist $s^{(1)},s^{(2)},\dots, s^{(m)} \in \N^l$ and
 $t^{(1)},t^{(2)},\dots, t^{(m)} \in \N^{d-l}$ such that for all $i$,
 $(k_{2di+1}, k_{2di+2},\dots, k_{2di+d}) = (s^{(i)},t^{(i)})$ and $(k_{2di
   +2d}, k_{2di+2d-1},\dots, k_{2di+d+1}) = (s^{(i+1)},t^{(i)})$. Thus
 summing over $s^{(1)},s^{(2)},\dots, s^{(m)} \in \N^l$ in the preceding
 equation leads to
\[ \sum_{s^{(1)},s^{(2)},\dots, s^{(m)} \in \N^l} \prod_{i=1}^m A_{s^{(i)}}
A_{s^{(i+1)}}^* = \sum_{(k_1,\dots, k_{2m})
  \respecte \sigma_l^{(d,m)}} a_{k_1} \widetilde a_{k_2}^* a_{k_3} \dots
a_{k_{2m-1}} \widetilde a_{k_{2m}}^* .\] 

Taking the trace and using the trace property we get
\begin{eqnarray*} S(a,\sigma_l^{(d,m)},d,m) & =
  &\sum_{s^{(1)},s^{(2)},\dots, s^{(m)} \in \N^l} Tr\left(\prod_{i=1}^m
    A_{s^{(i)}}^* A_{s^{(i)}}
  \right)\\
  & = & Tr\left[ \left(\sum_{s \in \N^l} A_{s}^* A_{s}\right)^m
  \right]\\
  & = & Tr\left[ (M_l^*M_l)^m\right]
\end{eqnarray*}
where the last identity follows from the fact that $M_l = \sum A_s \otimes
e_{s1}$.  This concludes the proof for $\sigma_l^{(d,m)}$. For $\widetilde
\sigma_l^{(d,m)}$ with $1\leq l \leq d$, the same kind of computations
yield to
\[S(a,\widetilde \sigma_l^{(d,m)},d,m) = \sum_{s_l \in \N} Tr\left[
  \left(\sum_{s \in  \N^{l-1}} A_{(s,s_l)}^* A_{(s,s_l)}\right)^m
  \right].\]

To conclude we only have to use Lemma
\ref{lemma=inequality_by_interpolation} below.
\end{proof}

\begin{lemma}\label{lemma=inequality_by_interpolation}
Let $X_1,X_2 \dots X_N$ be matrices. Then for any integer $m \geq 1$
\[\sum_{i=1}^N Tr( (X_i^*X_i)^m) \leq Tr( (\sum_{i=1}^N X_i^*X_i)^m).\]
\end{lemma}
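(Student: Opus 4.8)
The plan is to recast the inequality in terms of the positive semidefinite matrices $A_i = X_i^* X_i$ and their sum $S = \sum_{i=1}^N A_i$, so that the assertion becomes $\sum_{i=1}^N Tr(A_i^m) \leq Tr(S^m)$ for positive matrices $A_1,\dots,A_N$. The starting point is the trivial identity $Tr(S^m) = Tr(S^{m-1}S) = \sum_{i=1}^N Tr(S^{m-1}A_i)$, obtained by expanding $S=\sum_i A_i$ in just one of the $m$ factors. Comparing with the desired left-hand side, it then suffices to prove the single-index inequality $Tr(S^{m-1}A_i) \geq Tr(A_i^m)$ for each $i$ and sum over $i$.

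To prove $Tr(S^{m-1}A_i) \geq Tr(A_i^m)$, I would diagonalize $A_i = \sum_k \alpha_k\, e_k e_k^*$ in an orthonormal eigenbasis $(e_k)$ with eigenvalues $\alpha_k \geq 0$, so that $Tr(S^{m-1}A_i) = \sum_k \alpha_k\, e_k^* S^{m-1} e_k$ and $Tr(A_i^m) = \sum_k \alpha_k^m$. It is then enough to check, term by term, that $e_k^* S^{m-1} e_k \geq \alpha_k^{m-1}$, multiply by $\alpha_k\ge 0$, and sum over $k$. This term-by-term bound rests on two scalar observations. First, since $S - A_i = \sum_{j\neq i} A_j \geq 0$, testing against the unit vector $e_k$ gives $e_k^* S e_k \geq e_k^* A_i e_k = \alpha_k$. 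Second, applying the scalar Jensen inequality to the convex function $t\mapsto t^{m-1}$ against the spectral measure of the positive operator $S$ in the state $v \mapsto e_k^* v\, e_k$ (a probability measure, as $e_k$ is a unit vector) yields $e_k^* S^{m-1} e_k \geq (e_k^* S e_k)^{m-1}$. Combining these with the monotonicity of $t\mapsto t^{m-1}$ on $[0,\infty)$ gives $e_k^* S^{m-1} e_k \geq (e_k^* S e_k)^{m-1} \geq \alpha_k^{m-1}$, which is exactly what is needed.

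The main obstacle to keep in mind is that $t\mapsto t^{m-1}$ is not operator monotone once $m\geq 3$: the tempting operator inequality $S^{m-1}\geq A_i^{m-1}$ is false in general, so one cannot simply invoke $S\geq A_i$ and pair against $A_i$. Diagonalizing $A_i$ first is precisely the device that reduces the required comparison to a family of scalar statements tested in the eigenbasis of $A_i$, where only convexity (via Jensen), and not operator monotonicity, is used. As an alternative that justifies the name of the lemma, one may introduce the column matrix $Y$ with blocks $X_1,\dots,X_N$, so that $Y^*Y = \sum_i X_i^*X_i$, $Tr\big((\sum_i X_i^*X_i)^m\big) = \|Y\|_{S_{2m}}^{2m}$, and $\sum_i Tr((X_i^*X_i)^m) = \sum_i \|X_i\|_{S_{2m}}^{2m}$; the inequality then says that $Y \mapsto (X_1,\dots,X_N)$ is a contraction from $S_p$ into $\ell^p(S_p)$, which is an isometry at $p=2$ (both sides equal $Tr(\sum_i X_i^*X_i)$) and a contraction at $p=\infty$ (because $\|X_i\|_\infty^2 = \|X_i^*X_i\|_\infty \leq \|\sum_j X_j^*X_j\|_\infty$), so complex interpolation at the intermediate exponent $p=2m$ gives the claim. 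I would nonetheless favour the direct convexity argument above, as it is completely elementary and self-contained.
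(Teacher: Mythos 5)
Your proof is correct, but your main argument follows a genuinely different route from the paper's. The paper proves the lemma by interpolation (as its name suggests): it considers the map $T$ sending a column matrix with entries $X_1,\dots,X_N$ to the block-diagonal matrix with the same entries, observes that $T$ is an isometry on $S_2$ and a contraction on $S_\infty$ (since $\|X_i^*X_i\| \leq \|\sum_j X_j^*X_j\|$), and concludes by complex interpolation that $T$ is a contraction on $S_p$ for all $p\in[2,\infty]$; evaluating the $2m$-norms of the column and diagonal matrices gives the stated inequality. This is exactly the alternative you sketch in your last paragraph, so you have in effect reproduced the paper's proof as a side remark. Your preferred argument --- writing $Tr(S^m)=\sum_i Tr(S^{m-1}A_i)$ with $A_i=X_i^*X_i$, $S=\sum_i A_i$, diagonalizing each $A_i$, and reducing to the scalar estimate $e_k^*S^{m-1}e_k \geq (e_k^*Se_k)^{m-1}\geq \alpha_k^{m-1}$ via Jensen's inequality for the spectral measure and positivity of $S-A_i$ --- is sound at every step, and your caveat about the failure of operator monotonicity of $t\mapsto t^{m-1}$ correctly identifies why the naive argument breaks and why passing to the eigenbasis of $A_i$ is the right device. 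The trade-off: the paper's interpolation proof is shorter and yields the stronger statement that the diagonal-truncation map is a contraction on $S_p$ for every $p\in[2,\infty]$ (not just even integers), but it invokes complex interpolation for Schatten classes; your argument is completely elementary and self-contained, at the price of working only for integer $m$, which is all the lemma actually claims and all the paper ever uses.
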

\begin{proof}
This is a general inequality for the non-commutative $L_p$-norms. Indeed,
for any $\alpha,N \in \N$, and $p\in [2,\infty]$,
the map
\[ \begin{array}{rccl}T:&M_{N,1}(M_\alpha(\C)) &\to&M_N(M_\alpha(\C)\\
  & \left(\begin{array}{c} X_1\\ \vdots \\X_N\end{array} \right) & \mapsto
  &\left(\begin{array}{ccc} X_1&0 & 0\\ 0& \ddots&0 \\0&0&X_N\end{array}
  \right) \end{array}\] is a contraction for all $p$-norms. For $p=2$, this
is easy because $T$ is an isometry. For $p = \infty$ this is also
obvious. For a general $p \in (2,\infty)$ the claim follows by
interpolation.

Applied for $p=2m$, this concludes the proof since for an integer $m$,
\[\left\|\left(\begin{array}{c} X_1\\ \vdots \\X_N\end{array}
  \right)\right\|_{2m}^{2m} = Tr( (\sum_{i=1}^N X_i^*X_i)^m)\]
and 
\[\left\|\left(\begin{array}{ccc} X_1&0 & 0\\ 0& \ddots&0 \\0&0&X_N\end{array}
  \right)\right\|_{2m}^{2m} = \sum_{i=1}^N Tr( (X_i^*X_i)^m).\]
\end{proof}

We are now able to state and prove the main result of this section. Recall
that for a partition $\pi$ of $NC^*(1,m)$, $B(\pi)$ was defined in part
\ref{part=invariants_of_P_k} as the number of blocks of the partition
$\Phi(\pi)$ (the map $\Phi$ was defined after Corollary
\ref{corollary=symmetrization_for_n}).
\begin{corollaire}
\label{corollary=operator_Cauchy-Schwarz_inequality}
  Let $\pi \in NC^*(d,m)$. Then if $a$ and $M_l$ are as in Lemma
  \ref{lemma=identifications},
  \[ |S(a,\pi,d,m)| \leq \prod_{l=0}^d \left\|M_l\right\|_{S_{2m}\left (H
      \otimes \ell^2(\N)^{\otimes {d-l}}; H \otimes \ell^2(\N)^{\otimes
        l}\right) }^{2m\mu_l}\] where $\mu_l=
  \big(B(\pi{\left|_{A_{l+1}}\right.}) - B(\pi{\left|_{A_{l}}\right.})\big)
  / (m-1)$ where we take the convention that $B(\pi{\left|_{A_0}\right.}) =
  1$ and $B(\pi{\left|_{A_ {d+1}}\right.}) = m$.
\end{corollaire}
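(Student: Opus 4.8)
The plan is to iterate the Cauchy--Schwarz inequality of Lemma~\ref{lemma=CauchySchwarz} along the symmetrization schedule of Corollary~\ref{corollary=symmetrization_for_n}, thereby reducing the estimate of $S(a,\pi,d,m)$ to the quantities attached to the fully symmetric partitions, which are already computed in Lemma~\ref{lemma=identifications}. Concretely, I would apply \eqref{eq=CauchySchwarz} in successive rounds, choosing at round $t$ the indices $i=0,1,2,4,\dots,2^{k}$ (with $2^{k}\geq m$), so that the pairs of operations $\{P_{di},P_{(m+i)d}\}$ run through the schedule $P_{md},P_{d},P_{2d},\dots,P_{2^{k}d}$ of Corollary~\ref{corollary=symmetrization_for_n} (for $i=0$ one gets $P_{0}$ and $P_{md}$, for $i\geq 1$ one gets $P_{di}$ and $P_{(m+i)d}$). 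Each round replaces every current partition $\rho$ by its two symmetrizations $P_{di}(\rho)$ and $P_{(m+i)d}(\rho)$, both in $NC^{*}(d,m)$ by Lemma~\ref{lemma=properties_of_symmetrization_forNCnm}, so after the $n$ rounds of the schedule one obtains
\[
 |S(a,\pi,d,m)| \leq \prod_{\rho} S(a,\rho,d,m)^{1/2^{n}},
\]
the product ranging over the $2^{n}$ leaves $\rho$ of the resulting binary tree of symmetrizations. (All the $S(a,\rho,d,m)$ appearing after the first round are non-negative, being sums of squared $2$-norms as in the proof of Lemma~\ref{lemma=CauchySchwarz}, so the square roots make sense.) The crucial point, addressed last, is that with this choice of indices every leaf is one of the partitions $\sigma_{l}^{(d,m)}$ or $\widetilde\sigma_{l}^{(d,m)}$.

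Granting this, each leaf term is controlled by Lemma~\ref{lemma=identifications}: if $\rho$ equals $\sigma_{l'}^{(d,m)}$ or $\widetilde\sigma_{l'}^{(d,m)}$ then $S(a,\rho,d,m)\leq \|M_{l'}\|_{2m}^{2m}$. It then remains to track the exponents, and this is where the invariant of Lemma~\ref{lemma=combinatorial_invariant} intervenes. For $\rho\in NC^{*}(d,m)$ write $\mu_{l}(\rho)=\big(B(\rho{\left|_{A_{l+1}}\right.})-B(\rho{\left|_{A_{l}}\right.})\big)/(m-1)$ with the conventions $B(\rho{\left|_{A_0}\right.})=1$ and $B(\rho{\left|_{A_{d+1}}\right.})=m$. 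Using $P_{di}(\rho){\left|_{A_j}\right.}=P_{i}(\rho{\left|_{A_j}\right.})$ and $P_{(m+i)d}(\rho){\left|_{A_j}\right.}=P_{m+i}(\rho{\left|_{A_j}\right.})$ from Lemma~\ref{lemma=properties_of_symmetrization_forNCnm}, Lemma~\ref{lemma=combinatorial_invariant} gives for $1\leq j\leq d$
\[
 B(\rho{\left|_{A_j}\right.}) = \tfrac12\big(B(P_{di}(\rho){\left|_{A_j}\right.}) + B(P_{(m+i)d}(\rho){\left|_{A_j}\right.})\big),
\]
and the same identity holds trivially for the boundary indices $j=0,d+1$, where $B$ is the constant $1$, respectively $m$. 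Subtracting consecutive values and dividing by $m-1$ yields $\mu_{l}(\rho)=\tfrac12\big(\mu_{l}(P_{di}(\rho))+\mu_{l}(P_{(m+i)d}(\rho))\big)$ for every $l$, and iterating over the tree gives $\mu_{l}(\pi)=2^{-n}\sum_{\rho}\mu_{l}(\rho)$.

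Finally I would compute the leaf exponents. Since $B(c_{m})=B(1_{2m})=1$ and $B(r_{m})=m$, the defining profiles of $\sigma_{l'}^{(d,m)}$ and $\widetilde\sigma_{l'}^{(d,m)}$ in Corollary~\ref{corollary=symmetrization_for_n} make the sequence $B(\rho{\left|_{A_0}\right.}),\dots,B(\rho{\left|_{A_{d+1}}\right.})$ equal to $1,\dots,1,m,\dots,m$ with a single jump between indices $l'$ and $l'+1$; hence $\mu_{l}(\rho)=\delta_{l,l'}$ and $\|M_{l'}\|_{2m}^{2m}=\prod_{l}\|M_{l}\|_{2m}^{2m\mu_{l}(\rho)}$. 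Combining the three displays,
\[
 |S(a,\pi,d,m)| \leq \prod_{\rho}\Big(\prod_{l}\|M_{l}\|_{2m}^{2m\mu_{l}(\rho)}\Big)^{1/2^{n}} = \prod_{l}\|M_{l}\|_{2m}^{2m\mu_{l}(\pi)},
\]
which is the assertion. (That $\mu_{l}\geq0$ with $\sum_{l}\mu_{l}=1$ follows from Lemma~\ref{lemma=P_well_defined}, since $\Phi(\pi{\left|_{A_{l+1}}\right.})$ refines $\Phi(\pi{\left|_{A_{l}}\right.})$, together with the telescoping of the defining sum.)

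The hard part will be the claim that all $2^{n}$ leaves are fully symmetric. Along the distinguished branch following $P_{2^{k}d}\cdots P_{d}P_{md}$ this is exactly Corollary~\ref{corollary=symmetrization_for_n}; the difficulty is the remaining branches, for which one must check that the doubling schedule still terminates in a symmetric partition no matter which of $P_{di}$ or $P_{(m+i)d}$ is taken at each round. The natural input is that $P_{di}$ and $P_{(m+i)d}$ induce the \emph{same} reflection $s_{id}$ of $[2dm]$ (as $s_{(m+i)d}=s_{id}$), so that each round makes every surviving partition symmetric with respect to one more line; the delicate issue — precisely as in the induction proving Lemma~\ref{lemma=symmetrization} — is that the doubling choice of indices is what guarantees that symmetries produced at earlier rounds are not destroyed at later ones, so that after $n$ rounds each leaf is symmetric with respect to all the $s_{kd}$ and therefore coincides with one of the $\sigma_{l}^{(d,m)}$ or $\widetilde\sigma_{l}^{(d,m)}$. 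Establishing this uniformly over every branch of the tree is the technical heart of the argument.
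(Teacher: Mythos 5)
Your reduction works only if \emph{every} branch of the doubling tree ends at one of the partitions $\sigma_l^{(d,m)}$, $\widetilde\sigma_l^{(d,m)}$, and this claim --- which you defer as ``the technical heart'' --- is not merely unproven: it is false. Take $d=1$, $m=4$ and $\pi=\left\{\{1,2\},\{3,8\},\{4,5,6,7\}\right\}\in NC^*(1,4)$. Your schedule uses the pairs $(P_0,P_4)$, $(P_1,P_5)$, $(P_2,P_6)$, $(P_4,P_0)$ at rounds $1,2,3,4$. Follow the branch taking $P_4$, then $P_5$, then $P_2$, then $P_0$; a direct computation (easiest with the graphical mirror description of $P_k$) gives successively
\[
P_4(\pi)=\left\{\{1,2\},\{3,6\},\{4,5\},\{7,8\}\right\},\quad
P_5P_4(\pi)=\left\{\{1,2\},\{3,8\},\{4,5\},\{6,7\}\right\},
\]
\[
P_2P_5P_4(\pi)=\left\{\{1,2\},\{3,4\},\{5,8\},\{6,7\}\right\},\quad
P_0P_2P_5P_4(\pi)=\left\{\{1,4\},\{2,3\},\{5,8\},\{6,7\}\right\}.
\]
The leaf $\omega=\left\{\{1,4\},\{2,3\},\{5,8\},\{6,7\}\right\}$ is a nested pairing which is none of $r_4=\sigma_0^{(1,4)}$, $c_4=\sigma_1^{(1,4)}$, $1_8=\widetilde\sigma_1^{(1,4)}$. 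Worse, $\omega$ is fixed by both $P_0$ and $P_4$, and every later round of the doubling schedule uses indices congruent to $0$ or $4$ modulo $8$, so prolonging the schedule never symmetrizes this branch: it is genuinely trapped. Consequently Lemma \ref{lemma=identifications} cannot be applied to this leaf, and your product over leaves cannot be converted into a product of the norms $\|M_l\|_{2m}$. The underlying point is that Corollary \ref{corollary=symmetrization_for_n} controls only the single composition $P_{2^kd}\cdots P_dP_{md}$; replacing some factors $P_{id}$ by their Cauchy--Schwarz partners $P_{(m+i)d}$ changes which half of the polygon is kept at each step, and the induction behind Lemma \ref{lemma=symmetrization} does not survive this.

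This is exactly why the paper does not iterate Lemma \ref{lemma=CauchySchwarz} along a deterministic tree. It instead runs the Markov chain $\pi_{n+1}=P_{id}(\pi_n)$ with $i$ \emph{uniformly random} in $[2m]$. Corollary \ref{corollary=symmetrization_for_n} is then needed only to show that from every state some finite sequence of moves reaches the set $S$ of symmetric partitions, and that $S$ is absorbing; this yields almost sure absorption, while exceptional non-absorbed trajectories (such as the branch above) have probability zero and are harmless. The exponents are then extracted by passing to the limit in the martingale $B(\pi_n{\left|_{A_i}\right.})$ --- this uses Lemma \ref{lemma=combinatorial_invariant} together with the restriction property of Lemma \ref{lemma=properties_of_symmetrization_forNCnm} --- and in the submartingale $\log|S(a,\pi_n,d,m)|$ coming from Lemma \ref{lemma=CauchySchwarz}; the resulting weights are the absorption probabilities $\lambda_l(\pi)+\widetilde\lambda_l(\pi)$, identified by \eqref{eq=identification_des_lambda_l}. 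Your bookkeeping of the exponents (the averaging identity for $\mu_l$ and the computation $\mu_l=\delta_{l,l'}$ at the symmetric partitions) is correct and coincides with that part of the paper's proof, but it only becomes usable once absorption along the chosen dynamics is guaranteed, which your fixed deterministic schedule does not provide.
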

\begin{proof}
  The idea is, as in Lemma 2 and Corollary 3 of \cite{MR1812816}, to
  iterate the inequality of Lemma \ref{lemma=CauchySchwarz}, except that
  here the combinatorial invariants of the map $\pi \mapsto (P_{kd}(\pi) ,
  P_{kd + md}(\pi))$ (Lemma \ref{lemma=combinatorial_invariant}) allow us
  to precisely determine the exponents of each $\|M_l\|_{2m}$. In the rest
  of the proof since no confusion is possible, we will simply denote
  $\sigma_l = \sigma_l^{(d,m)}$ and $\widetilde \sigma_l = \widetilde
  \sigma_l^{(d,m)}$, and $S$ will denote the set $\{\sigma_l, 0 \leq l \leq
  d\} \cup \{\widetilde \sigma_l, 0 \leq l \leq d\}$. Fix $\pi \in NC^*(d,m)$.

  Maybe the clearest way to write out a proof is using the basic
  vocabulary of probability theory (for a reference see for example
  \cite{MR1199812}).  Let us consider the (homogeneous) Markov chain
  $(\pi_n)_{n \geq 0}$ on (the finite state space) $NC^*(d,m)$ given
  by $\pi_0=\pi$ and $\pi_{n+1}=P_{id}(\pi_n)$ where $i$ is uniformly
  distributed in $[2m]$ and independent from $(\pi_k)_{0\leq k\leq n}$
  (note that $\pi_{n+1} \in NC^*(d,m)$ if $\pi_n \in NC^*(d,m)$ by
  Lemma \ref{lemma=properties_of_symmetrization_forNCnm}). Corollary
  \ref{corollary=symmetrization_for_n} implies that the sequence
  $(\pi_n)_n$ is almost surely eventually equal to one of the
  $\sigma_l$ or $\widetilde \sigma_l$. Its second statement indeed
  expresses that if $\pi_n \in S$ then $\pi_N=\pi_n$ for all $N \geq
  n$; it suffices therefore to prove that $p_n \egdef \mathbb P(\pi_n
  \notin S) \to 0$ as $n \to \infty$. But if $k$ is fixed with
  $2^{k-2} \geq m$, its first statement implies that $p_k \leq
  1-(1/2m)^k=c<1$ for any starting state $\pi_0$. From the equality
  $p_{n+k} = p_n \mathbb P(\pi_{n+k} \notin S \big | \pi_n \notin S)$
  and the Markov property we get that $p_{n+k} \leq c p_n$ for any
  integer $n \in \N$, from which we deduce that $p_n \leq c^{\lfloor
    n/k\rfloor} \to 0$ as $n \to \infty$.

  Let us denote $\lambda_l(\pi) = \mathbb P\left(\lim_n \pi_n =
    \sigma_l\right)$ and $\widetilde \lambda_l(\pi) = \mathbb P\left(\lim_n
    \pi_n = \widetilde \sigma_l\right)$ for $0\leq l \leq d$ (take
  $\widetilde \lambda_0(\pi)=0$); note that $\sum_l
  \lambda_l(\pi)+\widetilde \lambda_l(\pi)=1$.

  Lemma \ref{lemma=combinatorial_invariant} and the last statement of Lemma
  \ref{lemma=properties_of_symmetrization_forNCnm} show that for any $i \in
  \{1,\dots,d\}$ the sequence $B(\pi_n{\left|_{A_i}\right.})$ is a
  martingale. In particular since $\pi_0=\pi$, $B(\pi{\left|_{A_i}\right.})
  = \mathbb E \big[B(\pi_n{\left|_{A_i}\right.})\big]$ for any $n\geq
  0$. Letting $n \to \infty$ we get 
\begin{eqnarray*} B(\pi{\left|_{A_i}\right.}) & = & \sum_{l=0}^d
  \lambda_l(\pi) B(\sigma_l{\left|_{A_i}\right.})  + \sum_{l=1}^d
  \widetilde \lambda_l(\pi) B(\widetilde \sigma_l{\left|_{A_i}\right.})\\ &
  =& \sum_{l=0}^d \left(\lambda_l(\pi)+\widetilde
    \lambda_l(\pi)\right)\left( 1+ (m-1)1_{l<i}\right) \\ &=& 1+ (m-1)
  \sum_{0\leq l<i} \lambda_l(\pi)+\widetilde
    \lambda_l(\pi).
\end{eqnarray*}
We used the fact that $B(\sigma_l{\left|_{A_i}\right.}) = B(\widetilde
\sigma_l{\left|_{A_i}\right.}) = 1 + (m-1)1_{l<i}$. This follows from the
observations that since $\Phi(c_m)=\Phi(1_{2m})=1_m$, $B(c_m)= |1_m|=1$ and
that since $\Phi(r_m)=0_m$, $B(r_m)=m$. Subtracting the equalities above
for $i$ and $i+1$ gives
\begin{equation}
\label{eq=identification_des_lambda_l}
 (\lambda_i (\pi)+ \widetilde \lambda_i(\pi)) (m-1) =
B(\pi{\left|_{A_{i+1}}\right.}) - B(\pi{\left|_{A_{i}}\right.}) 
\end{equation} 
with the convention that $B(\pi{\left|_{A_{0}}\right.}) = 1$ and
$B(\pi{\left|_{A_{d+1}}\right.}) = m$.

On the other hand Lemma \ref{lemma=CauchySchwarz} implies that the sequence
$M_n = \log |S(a,\pi_n,d,m)|$ is a submartingale. As above letting $n \to
\infty$ in the inequality $M_0 \leq \mathbb E[M_n]$ yields
\[ \log |S(a,\pi,d,m)| \leq \sum_{l=0}^d \lambda_l(\pi) \log
|S(a,\sigma_l,d,m)| + \sum_{l=1}^d \widetilde \lambda_l(\pi) \log
|S(a,\widetilde \sigma_l,d,m)|.\] 
If we denote simply by $\|M_l\|_{2m}$ the
quantity $\|M_l\|_{S_{2m}\left (H \otimes \ell^2(\N)^{\otimes {d-l}}; H
    \otimes \ell^2(\N)^{\otimes l}\right) }$, then by Lemma
\ref{lemma=identifications} this inequality becomes
\[\left|S(a,\pi,d,m)\right| \leq \prod_{l=0}^d \|M_l\|_{2m}^{2m
  (\lambda_l(\pi)+\widetilde \lambda_l(\pi))}.\]
This inequality, combined with \eqref{eq=identification_des_lambda_l},
concludes the proof.
\end{proof}

\section{Main result}
We are now able to prove the main results of this paper. We first treat the
``holomorphic'' setting (Theorems \ref{thm=main_result_for_the_free_group}
and \ref{thm=Main_theorem}) for which the results we get are completely
satisfactory.

\subsection{Holomorphic setting}
\label{part=holomorphic_setting_proof}
It is a generalization to operator coefficients of the main result of
\cite{MR2353703}. When the coefficients $a_k$ are taken to be scalars, the
techniques of our Theorem \ref{thm=Main_theorem} give a new proof and an
improvement of the theorem 1.3 of \cite{MR2353703}. In \cite{MR2353703},
Kemp and Speicher introduce free Poisson variables to get an upper bound,
whereas our proof is more combinatorial and lies is the study of
$NC^*(d,m)$ that is done is part \ref{susbection=study_of_NCstar_d_m}. We
refer to \cite{MR2266879} or to the paper \cite{MR2353703} for definitions
and facts on free cumulants and $\mathscr R$-diagonal operators. We just
recall that the $*$-distribution of a variable $c$ in a $C^*$-probability
space is characterized by its free cumulants, which are the family of
complex numbers $\cum_n[c^{\varepsilon_1},\dots,c^{\varepsilon_n}]$, for $n
\in \N$ and $\varepsilon_i \in \{1,*\}$. Moreover the $\mathcal R$-diagonal
operators are exactly the operators $c$ for which the cumulants
$\cum_n[c^{\varepsilon_1},\dots,c^{\varepsilon_n}]$ vanish except if $n$ is
even and if $1$'s and $*$'s alternate in the sequence
$\varepsilon_1,\dots,\varepsilon_n$. Since the family
$\lambda(g_1),\dots,\lambda(g_r)$ (where $g_1,\dots,g_r$ are the generators
of the free group $F_r$) form an example of $*$-free $\mathcal R$-diagonal
operators, Theorem \ref{thm=main_result_for_the_free_group} is a particular
case of Theorem \ref{thm=Main_theorem}, that is why do not include a proof.

\begin{proof}[Proof of Theorem \ref{thm=Main_theorem}]
The start of the proof is the same as in the proof of Theorem 1.3 of
\cite{MR2353703}, and was sketched in the Introduction. Fix $p=2m \in 2\N$.

As in \eqref{eq=def_of_S_apidm}, if $k = (k_1,\dots, k_d)\in \N^d$ denote by
$\widetilde a_k = a_{(k_d,\dots, k_1)}$ and $\widetilde c_k = c_{(k_d,\dots,
  k_1)} = c_{k_d} \dots c_{k_1}$. First develop the norms:
\begin{eqnarray*}
  \left\| \sum_{k \in \N^d} a_k \otimes c_k \right\|_{2m}^{2m}
  & = & \sum_{k_1,\dots, k_{2m} \in \N^d} Tr(a_{k_1} a_{k_2}^* \dots
  a_{k_{2m}}^*) \tau(c_{k_1}   c_{k_2}^* \dots c_{k_{2m}}^*)\\
  & = & \sum_{k_1,\dots, k_{2m} \in \N^d} Tr(a_{k_1} \widetilde a_{k_2}^* \dots
  \widetilde a_{k_{2m}}^*) \tau(c_{k_1}
  \widetilde c_{k_2}^* \dots \widetilde c_{k_{2m}}^*).
\end{eqnarray*}
Take $k_1,\dots, k_{2m} \in \N^d$; if $k_l = (k_l(1), k_l(2),\dots, k_l(d))$
then 
\[c_{k_1} \widetilde c_{k_2}^* \dots \widetilde c_{k_{2m}}^* =c_{k_1(1)}
c_{k_1(2)} \dots c_{k_1(d)} c_{k_2(1)}^* \dots c_{k_2(d)}^* \dots
c_{k_{2m}(d)}^*\] 
and by the fundamental property of cumulants:
\[ \tau(c_{k_1} \widetilde c_{k_2}^* \dots \widetilde c_{k_{2m}}^*) =
\sum_{\pi \in NC(2dm)} \cum_\pi[ c_{k_1(1)}, \dots ,c_{k_1(d)},
c_{k_2(1)}^*, \dots ,c_{k_2(d)}^*, \dots ,c_{k_{2m}(d)}^*].\]

Denote $k = (k_1,\dots, k_{2m}) \in (\N^{d})^{2m} \simeq \N^{2dm}$. Since
freeness is characterized by the vanishing of mixed cumulants (Theorem
11.16 in \cite{MR2266879}), $\cum_\pi[ c_{k_1(1)}, \dots ,c_{k_{2m}(d)}^*]$
is non-zero only if $k \respecte \pi$, and in this case we claim that it is
equal to $\cum_\pi[c_{d,m}]$ where
\begin{equation}
\label{eq=def_of_c_dm}
c_{d,m} = \overbrace{\underbrace{c, \dots, c}_{d} ,\underbrace{c^*, \dots,
    c^*}_{d}, \dots ,\underbrace{c, \dots ,c}_{d}, \underbrace{c^*, \dots,
    c^*}_{d}}^{2m\textrm{ groups}}.
\end{equation}
Relabel indeed the sequence $k_1(1),\dots,k_{2m}(d)$ by
$k_1,\dots,k_{2dm}$, and denote also by
$\varepsilon_1,\dots,\varepsilon_{2dm}$ the corresponding sequence of $1$'s
and $*$'s, in such a way that $\cum_\pi[ c_{k_1(1)}, \dots
,c_{k_{2m}(d)}^*] = \cum_\pi[\big(c_{k_i}^{\varepsilon_i}\big)_{1\leq i\leq
  2dm}]$ and $\cum_\pi[c_{d,m}] =
\cum_\pi[\big(c^{\varepsilon_i}\big)_{1\leq i\leq 2dm}]$. By the definition
of $\cum_\pi$, we have
\[\cum_\pi[\big(c_{k_i}^{\varepsilon_i}\big)_{1\leq i\leq 2dm}] =
\prod_{V\in\pi} \cum_{|V|}[(c_{k_i}^{\varepsilon_i})_{i \in V}]\] where the
products runs over by the blocks of $\pi$. Similarly \[\cum_\pi[c_{d,m}] =
\prod_{V \in \pi} \cum_{|V|}[(c^{\varepsilon_i})_{i \in V}].\] Our claim
thus follows from the observation that if $k \respecte \pi$ then for any
block $V$ of $\pi$ there is an index $s$ such that $k_i=s$ for all $i\in
V$, and the equality $\cum_{|V|}[(c_{s}^{\varepsilon_i})_{i \in V}] =
\cum_{|V|}[(c^{\varepsilon_i})_{i \in V}]$ expresses just the fact that $c$
and $c_s$ have the same $*$-distribution and therefore the same cumulants.

The next claim is that since $c$ is $\mathscr R$-diagonal,
$\cum_\pi[c_{d,m}]$ is non-zero only if $ \pi \in NC^*(d,m)$. Since with
the previous notation $\cum_\pi[c_{d,m}] = \prod_{V \in \pi}
\cum_{|V|}[(c^{\varepsilon_i})_{i \in V}]$, this amounts to showing that if
there is a block $V$ of $\pi$ which is not of even cardinality or for which
$1$'s and $*$'s do not alternate in the sequence $(\varepsilon_i)_{i \in
  V}$, then $\cum_{|V|}[(c^{\varepsilon_i})_{i \in V}]=0$. But this is
exactly the definition of $\mathcal R$-diagonal operators. Thus we get
\[
  \left\| \sum_{k \in \N^d} a_k \otimes c_k \right\|_{2m}^{2m}  = 
  \sum_{\pi \in NC^*(d,m)} \cum_\pi[c_{d,m}] \sum_{(k_1,\dots, k_{2m}) \respecte \pi}
  Tr(a_{k_1} \widetilde a_{k_2}^* \dots \widetilde a_{k_{2m}}^*),
\]
or with the notation introduced in \eqref{eq=def_of_S_apidm}
\begin{equation}
\label{eq=formula_of_2m_norms_in_terms_of_cumulants_inside_the_proof} 
\left\| \sum_{k \in \N^d} a_k \otimes c_k \right\|_{2m}^{2m}  =  \sum_{\pi \in NC^*(d,m)} \cum_\pi[c_{d,m}] S(a,\pi,d,m).
\end{equation}

Up to this point we have mainly reproduced the beginning of the proof of
Theorem 1.3 of \cite{MR2353703} (the authors of \cite{MR2353703} only deal
with scalar $a_k$'s but there is no other difference). 

We can now use the study of $NC^*(d,m)$ that we did in part
\ref{susbection=study_of_NCstar_d_m}. Recall in particular that there is a
map $\mathcal P:NC^*(d,m) \rightarrow NC(m)^{(d)}$ the properties of which
are summarized in Theorem \ref{thm=decomposition_of_NC*nm}.

Take $(\sigma_1, \dots ,\sigma_d) \in NC(m)^{(d)}$ and denote $\mu_l =
(|\sigma_{l+1}| - |\sigma_{l}|)/(m-1)$ where $|\sigma|$ denotes the number
of blocks of $\sigma$ with the convention $|\sigma_0|=1$ and
$|\sigma_{d+1}|=m$. If $\pi \in NC^*(d,m)$ and $\mathcal P(\pi) =
(\sigma_1, \dots ,\sigma_d)$ then by Corollary
\ref{corollary=operator_Cauchy-Schwarz_inequality}, $|S(a,\pi,d,m)| \leq
\prod_{l=0}^d \left\|M_l\right\|_{2m}^{2m\mu_l}$.

Thus by the first part of Theorem \ref{thm=decomposition_of_NC*nm}, we have
that 
 \begin{multline*}\left|\sum_{\pi \in NC^*(d,m), \mathcal P(\pi) =
       (\sigma_1, \dots, \sigma_d)} 
\cum_\pi[c_{d,m}] S(a,\pi,d,m) \right|\\
   \leq 4^{2m} \prod_{l=0}^d \left\| M_l \right\|_{2m}^{2m\mu_l}
   \max_{\mathcal P(\pi) = (\sigma_1, \dots ,\sigma_d)} |\cum_\pi[c_{d,m}]| .
 \end{multline*}
 But by the second statement of Theorem \ref{thm=decomposition_of_NC*nm}
 and Lemma \ref{lemma=domination_of_cumulants_with_many_pairs} below
 (recall that for $\tau(c)=\cum_1[c]=0$ since $c$ is $\mathcal R$-diagonal)
\[|\cum_\pi[c_{d,m}]| \leq \|c\|_{2}^{2dm} \left(\frac{ 16
    \|c\|_{2m}}{\|c\|_{2}}\right) ^{4m},\]
which implies
 \begin{multline}
\label{eq=domination_of_sums_with_same_image_byP}
\left|\sum_{\pi \in NC^*(d,m), \mathcal P(\pi) =
       (\sigma_1, \dots ,\sigma_d)}
     \cum_\pi[c_{d,m}] S(a,\pi,d,m) \right|\\
   \leq 4^{10m} \prod_{l=0}^d \left\| M_l \right\|_{2m}^{2m\mu_l}
   \|c\|_{2}^{2dm} \left(\frac{ \|c\|_{2m}}{\|c\|_{2}}\right) ^{4m}.   
 \end{multline}

 But by Theorem 3.2 in \cite{MR583216}, for any non-negative integers
 $s_0,\dots, s_d$ such that $\sum_i s_i=m-1$, the number of
 $(\sigma_1,\dots,\sigma_d) \in NC(m)^{(d)}$ such that $|\sigma_{l+1}| -
 |\sigma_{l}|=s_l$ for any $0 \leq l \leq d$ (with the conventions
 $|\sigma_0|=1$ and $|\sigma_{d+1}|=m$) is equal to $(1/m) \binom m {s_0}
 \binom m {s_1} \dots \binom m {s_d}$. Thus from
 \eqref{eq=formula_of_2m_norms_in_terms_of_cumulants_inside_the_proof} we
 deduce
\begin{multline*}
 \left\| \sum_{k \in \N^d} a_k \otimes c_k \right\|_{2m}^{2m} \leq 4^{10m}
\|c\|_{2}^{2dm} \left(\frac{ \|c\|_{2m}}{\|c\|_{2}}\right) ^{4m} \\
\sum_{s_0 + \dots + s_d=m-1} (1/m) \binom m {s_0} \binom m {s_1} \dots \binom m
{s_d} \prod_{l=0}^d \left\| M_l \right\|_{2m}^{2m s_l/(m-1)} .
\end{multline*}

Denote for simplicity $\gamma_l= \left\| M_l \right\|_{2m}^{2m /(m-1)}$.
Since the number of $s_0, \dots ,s_d \in \N$ such that $s_0 + \dots +
s_d=m-1$ is equal to $\binom{m+d-1}{d}$, this inequality becomes
\begin{multline*} \left\| \sum_{k \in \N^d} a_k \otimes c_k
  \right\|_{2m}^{2m} \leq 4^{10m} \|c\|_{2}^{2dm} \left(\frac{
      \|c\|_{2m}}{\|c\|_{2}}\right) ^{4m} \binom{m+d-1}{d} 
  \\\sup_{s_0 + \dots +s_d=m-1} (1/m) \binom m {s_0} \binom m {s_1} \dots \binom m
{s_d} \prod_{l=0}^d \gamma_l^{s_l}.
\end{multline*}

Now use the fact that for any integers $N$ and $n$, $\binom N n \leq
(N/n)^n (N/(N-n))^{N-n}$ with the convention $(N/0)^0=1$. For a fixed $N$,
this can be proved by induction on $n \leq N/2$ using the fact that $x \in
\R^+ \mapsto x \log(1+1/x)$ is increasing. Thus
\[ \binom{m+d-1}{d} \leq \binom{m+d}{d} \leq \left(1+\frac{m}{d}\right)^{d}
\left(1+\frac{d}{m}\right)^{m}.\] 
But since $\log$ is concave, if $s_0+\dots +s_d=m-1$,
\begin{eqnarray*} \prod_{l=0}^d  \left(\frac{m}{m-s_l}\right)^{m-s_l} & = &
  \exp\left((md+1)\sum_0^d  \frac{m-s_l}{md+1}
    \log\big(m/(m-s_l)\big)\right)\\
& \leq &  \exp\left((md+1) \log\big(\sum_0^d  m/(md+1)\big)\right)\\
& = &  \exp\left((md+1) \log\big(1+(m-1)/(md+1)\big)\right) \leq \exp(m)
\end{eqnarray*}
and
\begin{eqnarray*}
  \prod_{l=0}^d \left(\frac{m\gamma_l}{s_l}\right)^{s_l} & = &
  \exp\left((m-1)\sum_0^d \frac{s_l}{m-1} \log\big( m\gamma_l
    /s_l\big)\right)\\
  & \leq & \exp\left((m-1) \log\big( m/(m-1) \sum_0^l \gamma_l \big)\right)\\
  & = & \left(\gamma_0+\dots \gamma_l\right)^{m-1} \left(\frac m
    {m-1}\right)^{m-1}
\end{eqnarray*}

But $(m/(m-1))^{m-1} \leq m$ for any $m \geq 1$. This leads to
\begin{multline} 
\label{eq=last_domination_of_2m-norm}
\left\| \sum_{k \in \N^d} a_k \otimes c_k
  \right\|_{2m}^{2m} \leq 4^{10m} \|c\|_{2}^{2dm} \left(\frac{
      \|c\|_{2m}}{\|c\|_{2}}\right) ^{4m} \\\left(1+\frac{m}{d}\right)^{d}
  \left(1+\frac{d}{m}\right)^{m} \exp(m) \left(\gamma_0+\dots
    \gamma_l\right)^{m-1}.
\end{multline}

Noting that since $2m/(m-1)\geq 2$,
\[(\gamma_0+\dots \gamma_l)^{m-1} = \left\| \left( \| M_l \|_{2m} \right)_l
\right\|_{\ell^{2m/(m-1)}(\{0,\dots ,d\})}^{2m} \leq \left\| \left( \| M_l
    \|_{2m} \right)_l \right\|_{\ell^{2}(\{0,\dots ,d\})}^{2m}\] and
taking the $2m$-th root in \eqref{eq=last_domination_of_2m-norm} one
finally gets
 \begin{multline*} 
   \left\| \sum_{k \in \N^d} a_k \otimes c_k \right\|_{2m} \leq 4^5 \sqrt{e
     (1+d/m)} \left(1+\frac{m}{d}\right)^{d/2m} \|c\|_{2}^d \left(\frac{
       \|c\|_{2m}}{\|c\|_{2}}\right) ^{2} \\ \left\| \left( \| M_l \|_{2m}
     \right)_l \right\|_{\ell^{2}(\{0,\dots ,d\})}.
 \end{multline*}
 To conclude for the case $m<\infty$, just note that
 $\left(1+\frac{m}{d}\right)^{d/m} \leq e$.

Letting $m \to \infty$ and noting that $\left(1+\frac{m}{d}\right)^{d/m}
\to 1$ concludes the proof for the operator norm.

When the $c_k$'s are circular, since $\cum_\pi[c_{d,m}]=1$ if $\pi \in
NC^*_2(d,m)$ and $\cum_\pi[c_{d,m}]=0$ otherwise, we can replace
\eqref{eq=domination_of_sums_with_same_image_byP} by 
\[\left|\sum_{\pi \in
NC^*(d,m), \mathcal P(\pi) = (\sigma_1, \dots ,\sigma_d)} \cum_\pi[c_{d,m}]
S(a,\pi,d,m) \right| \leq \prod_{l=0}^d \left\| M_l \right\|_{2m}^{2m\mu_l}.
\]
Following the rest of the arguments we get the claimed results.
\end{proof}
We still have to prove this Lemma that was used in the above proof.
\begin{lemma} 
\label{lemma=domination_of_cumulants_with_many_pairs}
Let $\pi \in NC(n)$ a non-crossing partition that has at least $K$ blocks
of size $2$ and in which all blocks have a size at most $N$.

  Let $c_1,\dots, c_n$ be elements of a tracial $C^*$-probability space
  $(\mathcal A,\tau)$ that are centered: $\tau(c_k)=0$ for all $k$.
  Let $m_p=\max_k \|c_k\|_p$ for $p=2,N$. Then

  \begin{equation}
\label{eq=majoration_of_cumulants_with_many_pairs}
 |\cum_\pi[c_1,\dots, c_n]| \leq m_2^{2K} \left(16 m_N \right)^{n-2K}.
\end{equation}
\end{lemma}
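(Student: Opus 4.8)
The plan is to exploit the multiplicativity of free cumulants to factor $\cum_\pi$ over the blocks of $\pi$ and then bound each factor separately. Writing $\pi=\{V_1,\dots,V_r\}$, multiplicativity gives $\cum_\pi[c_1,\dots,c_n]=\prod_{V\in\pi}\cum_{|V|}[(c_k)_{k\in V}]$, so it suffices to control each factor $\cum_{|V|}[(c_k)_{k\in V}]$. First I would dispose of singletons: if some block has size $1$, the corresponding factor is $\cum_1[c_k]=\tau(c_k)=0$, so $\cum_\pi=0$ and the inequality holds trivially. Hence I may assume every block has size at least $2$; in particular at least $K$ blocks have size exactly $2$, and I fix a set $\mathcal K$ consisting of exactly $K$ of them.

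I would then establish two per-block estimates. (i) For a pair $V=\{i,j\}$, since the elements are centered one has $\cum_2[c_i,c_j]=\tau(c_ic_j)-\tau(c_i)\tau(c_j)=\tau(c_ic_j)$, so the Cauchy-Schwarz inequality $|\tau(ab)|\le\|a\|_2\|b\|_2$ gives $|\cum_2[c_i,c_j]|\le\|c_i\|_2\|c_j\|_2\le m_2^2$. (ii) For an arbitrary block $V$ of size $j$ (so $2\le j\le N$), I would prove $|\cum_j[(c_k)_{k\in V}]|\le(16m_N)^j$. The estimate (ii) is the crux. Here I would invert the moment-cumulant formula on $NC(j)$,
\[\cum_j[a_1,\dots,a_j]=\sum_{\sigma\in NC(j)}\mu(\sigma,1_j)\prod_{W\in\sigma}\tau\Big(\prod_{k\in W}a_k\Big),\]
and bound each moment by the tracial H\"older inequality, $|\tau(\prod_{k\in W}a_k)|\le\prod_{k\in W}\|a_k\|_{|W|}\le m_N^{|W|}$, using $|W|\le j\le N$ and that $p\mapsto\|x\|_p$ is non-decreasing; multiplying over $W\in\sigma$ gives $m_N^j$ for every $\sigma$. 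For the M\"obius coefficients I would invoke the standard lattice isomorphism $[\sigma,1_j]\cong NC(|\sigma|)$ of the non-crossing lattice (see \cite{MR2266879}), which gives $\mu(\sigma,1_j)=(-1)^{|\sigma|-1}C_{|\sigma|-1}$, whence $|\mu(\sigma,1_j)|=C_{|\sigma|-1}\le 4^{|\sigma|-1}\le 4^{j-1}$ for the Catalan number $C_{|\sigma|-1}$. Since $|NC(j)|\le 4^{j-1}$ (as already recorded in the paper), the sum of the absolute values of the coefficients is at most $16^{j-1}$, so $|\cum_j|\le 16^{j-1}m_N^{\,j}\le(16m_N)^j$.

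Finally I would assemble the pieces, bounding the $K$ factors indexed by $\mathcal K$ via (i) and all remaining factors via (ii):
\[|\cum_\pi[c_1,\dots,c_n]|\le\Big(\prod_{V\in\mathcal K}m_2^2\Big)\Big(\prod_{V\notin\mathcal K}(16m_N)^{|V|}\Big)=m_2^{2K}\,(16m_N)^{\,n-2K},\]
where I used that the blocks outside $\mathcal K$ account for $\sum_{V\notin\mathcal K}|V|=n-2K$ of the points. The main obstacle is step (ii), and within it the clean control of the M\"obius function through the interval isomorphism $[\sigma,1_j]\cong NC(|\sigma|)$; the remaining ingredients (Cauchy-Schwarz, H\"older, and the counting bound $|NC(j)|\le 4^{j-1}$) are routine.
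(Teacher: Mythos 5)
Your overall strategy is sound and is essentially the same as the paper's: reduce by multiplicativity of $\pi\mapsto\cum_\pi$, kill singletons via $\cum_1[c_k]=\tau(c_k)=0$, handle pairs by Cauchy--Schwarz, and bound the cumulant of each larger block by M\"obius inversion on $NC(j)$ combined with H\"older's inequality $|\tau(\prod_{k\in W}c_k)|\le m_N^{|W|}$. (The paper outsources exactly this last step to Lemma 4.3 of Kemp--Speicher, with $m_\infty$ replaced by $m_N$, and arrives at the same bound $4^{2j}m_N^j$ per block.)

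There is, however, a genuine error in your justification of the M\"obius bound. The interval $[\sigma,1_j]$ in $NC(j)$ is \emph{not} isomorphic to $NC(|\sigma|)$, and the formula $\mu(\sigma,1_j)=(-1)^{|\sigma|-1}C_{|\sigma|-1}$ is false. This is the analogue of what holds in the \emph{full} partition lattice, where $[\sigma,1_j]$ is the lattice of partitions of the set of blocks of $\sigma$; it fails for non-crossing partitions because merging blocks of $\sigma$ can create crossings. Concretely, take $j=4$ and $\sigma=\{\{1,3\},\{2\},\{4\}\}$: the blocks $\{2\}$ and $\{4\}$ cannot be merged with each other without also merging them with $\{1,3\}$, so $[\sigma,1_4]$ has only $4$ elements (a Boolean lattice) and $\mu(\sigma,1_4)=1$, whereas $NC(3)$ has $5$ elements and your formula would give $(-1)^2C_2=2$. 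The correct statement (see Nica--Speicher) goes through the Kreweras complement $K(\sigma)$: one has $[\sigma,1_j]\cong\prod_{W\in K(\sigma)}NC(|W|)$, hence
\[
\mu(\sigma,1_j)=\prod_{W\in K(\sigma)}(-1)^{|W|-1}C_{|W|-1},
\qquad
|\mu(\sigma,1_j)|\le\prod_{W\in K(\sigma)}4^{|W|-1}=4^{\,j-|K(\sigma)|}\le 4^{\,j-1},
\]
where $C_m$ denotes the $m$-th Catalan number. (Using $|\sigma|+|K(\sigma)|=j+1$ this even gives $|\mu(\sigma,1_j)|\le 4^{|\sigma|-1}$, so your numerical bound is true --- only the claimed equality and isomorphism are wrong.) With this repair, the rest of your argument --- the count $|NC(j)|\le 4^{j-1}$, the resulting factor $16^{j-1}m_N^j\le(16m_N)^j$ per block, and the final assembly yielding $m_2^{2K}(16m_N)^{n-2K}$ --- goes through verbatim.
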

\begin{proof}
  Since both $\pi \mapsto \cum_\pi$ and the right-hand side of
  \eqref{eq=majoration_of_cumulants_with_many_pairs} are multiplicative, we
  only have to prove \eqref{eq=majoration_of_cumulants_with_many_pairs}
  when $\pi=1_n$ with $n \leq N$. Then as usual $\cum_\pi$ is denoted by
  $\cum_n$. If $n=1$ it is obvious since $\cum_1(c_1)=\phi(c_1) = 0$.

If $n=2$, then $K=1$ and $\cum_2(c_k,c_l) = \tau(c_k c_l) - \tau(c_k)
\tau(c_l) = \tau(c_k c_l)$. By the Cauchy-Schwarz inequality we get
$|\cum_2(c_k,c_l)| \leq m_2^2$.

We now focus on the case $n>2$, and then $K=0$. This is essentially done in
the proof of Lemma 4.3 in \cite{MR2353703} but we have to replace the
inequality $|\tau(c_{k_1} \dots c_{k_l})| \leq m_{\infty}^l$ by H{\"o}lder's
inequality $|\tau(c_{k_1} \dots c_{k_l})| \leq m_N^l$ for any $l \leq n
\leq N$. Following the proof of Lemma 4.3 in \cite{MR2353703}, we thus get
that
\[\cum_{n}[c_1,\dots, c_n] \leq 4^{n-1} \sum_{\sigma\in NC(n)}
{m_n}^n \leq 4^{2n} {m_N}^n.\]
\end{proof}

\subsection{Non-holomorphic setting}
Here we consider Theorems \ref{thm=non_holomo_Rdiag} and
\ref{thm=non_holomo_selfadj}. We only sketch their proofs. The idea is the
same as in the holomorphic setting, except that here the relevant subset of
non-crossing partitions is the set $NC(d,m)$ introduced and studied in part
\ref{part=NCdmsansstar}.

\begin{proof}[Sketch of proof of Theorem \ref{thm=non_holomo_selfadj}]
  We will use that if $c$ has a symmetric distribution, then $c$ has
  vanishing odd cumulants. This means that $\cum_\pi[c,\dots,c]=0$ unless
  $\pi$ has only blocks of even cardinality. To check this, by the
  multiplicativity of free cumulants, we have to prove that
  $\cum_n[c,\dots,c] = \cum_{1_n}[c,\dots,c]=0$ if $n$ is odd. But this is
  clear: since $-c$ and $c$ have the same distribution, $\cum_n[c,\dots,c]
  = \cum_n[-c,\dots,-c]$. On the other hand since $\cum_n$ is $n$-linear,
  $\cum_n[-c,\dots,-c] = (-1)^n\cum_n[c,\dots,c]$.

  Take $(c_k)_{k \in \N}$ and $(a_k)_{k \in \N^d}$ as in Theorem
  \ref{thm=non_holomo_selfadj} and define $\widetilde a_k$ and
  $c_{k_1,\dots,k_d}$ as in the proof of Theorem
  \ref{thm=Main_theorem}. Assume for simplicity that $c_k$ is normalized by
  $\|c_k\|_2 = 1$.  Denote by $I$ the set of $k=(k_1,\dots ,k_d) \in \N^d$
  such that for any $1\leq i<d$ $k_i \neq k_{i+1}$. Then for $p=2m$ we have
  that
    \[ \left\|\sum_{k \in I} a_k \otimes c_k
      \right\|_{2m}^{2m}  = \sum_{k_1,\dots, k_{2m} \in I} Tr(a_{k_1}
      \widetilde a_{k_2}^* \dots \widetilde a_{k_{2m}}^*) \tau(c_{k_1}
      c_{k_2} \dots c_{k_{2m}}).\]
Expanding the moment $\tau(c_{k_1} \dots c_{k_{2m}})$ using cumulants we get
\[ \tau(c_{k_1} c_{k_2}\dots c_{k_{2m}}) = \sum_{\pi \in NC(2dm)} \cum_\pi[
c_{k_1(1)}, \dots ,c_{k_1(d)}, c_{k_2(1)}, \dots ,c_{k_2(d)}, \dots
,c_{k_{2m}(d)}].\] By freeness of the family $(c_k)_{k\in\N}$, by the
assumption on the vanishing of odd moments and by Lemma
\ref{lemma=characterization_of_NCdm} such a cumulant is equal to $0$ except
if $\pi \in NC(d,m)$ and $(k_1,\dots,k_{2m}) \respecte \pi$, in which case
it is equal to $\cum_\pi[c,c\dots,c]$. We get
\[ \left\|\sum_{k \in I} a_k \otimes c_k \right\|_{2m}^{2m} = \sum_{\pi \in
  NC(d,m)} \cum_\pi[c,\dots,c] S(a,\pi,d,m).\] 

But by Lemma \ref{lemma=symmetrization_forNCdm}, Lemma
\ref{lemma=identifications} and an iteration of Lemma
\ref{lemma=CauchySchwarz} we get that for any $\pi \in NC(d,m)$
\[ S(a,\pi,d,m) \leq \max_{0\leq l \leq d} \|M_l\|_{2m}^{2m}.\]

On the other hand (remembering that $\|c\|_2=1$), Theorem
\ref{thm=decomposition_of_NCn_m} and Lemma
\ref{lemma=domination_of_cumulants_with_many_pairs} imply that for $\pi \in
NC(d,m)$,
\[|\cum_\pi[c,\dots ,c]| \leq \left(16\|c\|_{2m}\right) ^{4m}.\]

This yields
\[\left\|\sum_{k \in I} a_k \otimes c_k
\right\|_{2m}^{2m} \leq \sum_{\pi \in NC(d,m)} \left(16 \|c\|_{2m}\right)
^{4m} \max_{0\leq l \leq d} \|M_l\|_{2m}^{2m}.\]

But by Theorem \ref{thm=decomposition_of_NCn_m} $NC(d,m)$ has cardinality
less than $4^{2m}(d+1)^{2m}$. Taking the $2m$-th root in the preceding
equation we thus get 
\[\left\|\sum_{k \in I} a_k \otimes c_k
\right\|_{2m} \leq 4^5 (d+1) \|c\|_{2m}^2 \max_{0\leq l \leq d} \|M_l\|_{2m}.\]
This proves Theorem \ref{thm=non_holomo_selfadj} for the case when $p \in
2\N$. For $p = \infty$ just make $p \to \infty$.
\end{proof}

For Theorem \ref{thm=non_holomo_Rdiag} the proof is the same except that we
have to be slightly more careful in the beginning. Recall that $I_d$ is
the set of $(k_1,\varepsilon_1,\dots ,k_d,\varepsilon_d) \in (\N \times
\{1,*\})^d$ such that $\lambda(g_{k_1})^{\varepsilon_1} \dots
\lambda(g_{k_d})^{\varepsilon_d}$ corresponds to an element of length $d$
in the free group $F_\infty$. For a family of matrices
$(a_{k,\varepsilon})_{(k,\varepsilon)\in I_d}$ denote by
\[\breve a_{k,\varepsilon} = a_{(k_d,\dots,k_1),(\overline
  \varepsilon_d,\dots \overline \varepsilon_1)}\] where $\overline * = 1$
and $\overline 1=*$. The motivation for this notation is the following: for
$(k,\varepsilon) \in I_d$ denote by $c_{k,\varepsilon} =
c_{k_1}^{\varepsilon_1} \dots c_{k_d}^{\varepsilon_d}$, so that if $\breve
c_{k,\varepsilon}$ is defined as $\breve a_{k,\varepsilon}$, we have that
$\breve c_{k,\varepsilon}^* = c_{k,\varepsilon}$.

For $k=(k_1,\dots,k_{2m}) \in (\N^d)^{2m}$,
$\varepsilon=(\varepsilon_1,\dots,\varepsilon_{2m}) \in (\{1,*\})^{2m}$ and
$\pi \in NC(2dm)$ with blocks of even cardinality we will also write
$(k,\varepsilon) \respecte \pi$ if $k_i=k_j$ for all $i \sim_\pi j$ and if in addition for each block 
$\{i_1<\dots<i_{2p}\}$ of $\pi$, $1$'s and $*$'s alternate in the sequence $\varepsilon_{i_1},\varepsilon_{i_2},\dots,\varepsilon_{i_{2p}}$.

Last we denote, for $\pi \in NC(d,m)$ 
\[\widetilde S(a,\pi,d,m) = \sum_{(k,\varepsilon) \respecte \pi}
Tr(a_{k_1,\varepsilon_1} \breve a_{k_2,\varepsilon_2}^*
a_{k_3,\varepsilon_3} \dots \breve a_{k_{2m},\varepsilon_{2m}}^*).\]

The proofs of Lemma \ref{lemma=CauchySchwarz} and Lemma
\ref{lemma=identifications} still apply with this notation:
\begin{lemma}\label{lemma=CauchySchwarzbis}
  Let $\pi\in NC(d,m)$, and take a finitely supported family of matrices
  $a= (a_{k,\varepsilon})_{(k,\varepsilon) \in I_d}$ as above. For any integer
    $i$
    \[ \left|\widetilde S(a,\pi,d,m)\right| \leq \left(\widetilde
      S(a,P_{di}(\pi),d,m)\right)^{1/2} \left(\widetilde
      S(a,P_{(m+i)d}(\pi),d,m)\right)^{1/2}.\]
\end{lemma}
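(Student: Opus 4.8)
The plan is to follow the proof of Lemma~\ref{lemma=CauchySchwarz} essentially verbatim, carrying the sign data $\varepsilon$ through every step and checking that it transforms correctly. As there, I would first reduce to the case $i=0$. Using the trace property (cyclic invariance of $Tr$), $\widetilde S(a,\pi,d,m)$ equals $\widetilde S(a,\tau_{di}(\pi),d,m)$ when $i$ is even and $\widetilde S(\breve a^{*},\tau_{di}(\pi),d,m)$ when $i$ is odd, where $\tau_{di}$ is the rotation used in Lemma~\ref{lemma=CauchySchwarz} and $\breve a^{*}=(\breve a_{k,\varepsilon}^{*})$; since $P_{n+k}=\tau_n^{-1}\circ P_k\circ\tau_n$, this reduces the general inequality to the pair $(P_0,P_{dm})$. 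The one genuinely new point is to check that the alternation part of the relation $(k,\varepsilon)\respecte\pi$ survives $\tau_{di}$ and the substitution $a\mapsto\breve a^{*}$: this holds because reversing and conjugating a block flips each sign through the involution $\overline{\cdot}$ — which is exactly how $\breve{\cdot}$ is built — so that within each block the alternating pattern is merely shifted cyclically.

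Next I would reduce the general $(d,m)$ statement to the case $(N,1)$ with $N=dm$, as in the first paragraph of the proof of Lemma~\ref{lemma=CauchySchwarz}: group each run of $d$ consecutive positions into a single letter $\beta_{k,\varepsilon}$ and verify that the associated $\breve\beta^{*}$ is the word obtained by reversing and conjugating, so that $\widetilde S(a,\pi,d,m)=\widetilde S(\beta,\pi,dm,1)$ with the sign convention intact.

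The heart of the argument is the $(N,1)$, $i=0$ case. I would split $[2N]$ into the union $A$ of blocks contained in $I_N=\{1,\dots,N\}$, the union $B$ of blocks contained in $\{N+1,\dots,2N\}$, and the straddling part $C$, and rewrite the sum defining $\widetilde S$ by fixing the index–sign data on $C$ and summing the matrices over the interior data on $A$ and on $B$ separately. Applying the Cauchy–Schwarz inequality for the trace, then the scalar Cauchy–Schwarz inequality, gives $|\widetilde S(a,\pi,N,1)|\le (1)^{1/2}(2)^{1/2}$, and the point is to identify $(1)=\widetilde S(a,P_N(\pi),N,1)$ and $(2)=\widetilde S(a,P_0(\pi),N,1)$. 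This requires that $(k,\varepsilon)\respecte\pi$ factorize over $A$, $B$, $C$: the alternation condition on a block inside $A$ (resp.\ $B$) is internal, while on a straddling block in $C$ it couples the data on $I_N\setminus A$ with that on $I_{2N}\setminus B$.

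The main obstacle is exactly this last identification. When the symmetry $s_N$ of Definition~\ref{def=intervals_symmetries} folds a straddling block of $C$ onto one side to produce a block of $P_N(\pi)$, one must verify that the alternation constraint on the resulting doubled configuration corresponds precisely to the original constraint \emph{after} applying the sign flip $\overline{\cdot}$ carried by $\breve a$. In effect, the only reason for working with $\breve{\cdot}$ and the involution $\overline{\cdot}$ rather than the plain $\widetilde{\cdot}$ is to make the Cauchy–Schwarz fold compatible with the alternating-sign condition; once one checks that the conjugate–transpose step flips the signs in the manner prescribed by $\overline{\cdot}$, the index-matching that proved $(1)=S(a,P_N(\pi),N,1)$ and $(2)=S(a,P_0(\pi),N,1)$ in the scalar setting goes through unchanged. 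The remaining manipulations are routine adaptations of the proof of Lemma~\ref{lemma=CauchySchwarz}, and I would leave them to the reader.
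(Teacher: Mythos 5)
Your proposal is correct and follows exactly the route the paper intends: the paper gives no separate argument for Lemma \ref{lemma=CauchySchwarzbis}, stating only that the proof of Lemma \ref{lemma=CauchySchwarz} still applies with the new notation and leaving the verification to the reader. Your outline carries out precisely that adaptation --- the reduction to $i=0$ by rotation, the regrouping to the $(dm,1)$ case, and the $A$, $B$, $C$ fold --- and correctly isolates the one genuinely new point, namely that the alternating-sign constraint survives the Cauchy--Schwarz fold because the adjoint step reverses and conjugates signs exactly as prescribed by the involution $\overline{\,\cdot\,}$ built into $\breve a$.
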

\begin{lemma}\label{lemma=identificationsbis}
  Let $d$, $m$, $a=(a_{k,\varepsilon})_{(k,\varepsilon) \in I_d }$ and
  $M_l$ be as in Theorem \ref{thm=non_holomo_Rdiag}, and $\sigma_l$ and
  $\widetilde \sigma_l$ as defined in Corollary
  \ref{corollary=symmetrization_for_n}. Then for $l\in\{0,1,\dots, d\}$:
\[ \widetilde S(a,\sigma_l^{(d,m)},d,m) = \left\|M_l\right\|_{S_{2m}\left
    (H \otimes \ell^2(\N)^{\otimes {d-l}}; H \otimes \ell^2(\N)^{\otimes l}\right)
}^{2m}.\] Moreover for $l \in \{1,\dots, d\}$
    \[ \widetilde S(a,\widetilde \sigma_l^{(d,m)},d,m) \leq \left\|M_l\right\|_{S_{2m}\left
        (H \otimes \ell^2(\N)^{\otimes {d-l}}; H \otimes \ell^2(\N)^{\otimes l}\right) }^{2m}.\]
\end{lemma}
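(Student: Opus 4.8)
The plan is to transpose the proof of Lemma~\ref{lemma=identifications} almost verbatim, the only genuinely new feature being the signs $\varepsilon$ and the fact that the reversal $r$ is now replaced by the reversal-conjugation $\rho(k,\varepsilon)=((k_d,\dots,k_1),(\overline\varepsilon_d,\dots,\overline\varepsilon_1))$ defining $\breve a_{k,\varepsilon}=a_{\rho(k,\varepsilon)}$. First I would record the two elementary facts that run the dictionary: since $\rho$ is an involution one has $a_{j,\delta}^*=\breve a_{\rho(j,\delta)}^*$, and (as already observed) $\breve c_{k,\varepsilon}^*=c_{k,\varepsilon}$, so that in $\widetilde S$ each position $(r-1)d+j$ cleanly carries the pair $(k_r(j),\varepsilon_r(j))$, with exponent $\varepsilon_r(j)$ regardless of the parity of the group $r$.

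Next, fixing $l$ and writing $S\in(\N\times\{1,*\})^l$, $T\in(\N\times\{1,*\})^{d-l}$, I would introduce the row operators $A_S=(a_{S,T})_T$ exactly as before, so that $M_l=\sum_S A_S\otimes e_{S1}$ and $M_l^*M_l=\sum_S A_S^*A_S$. The identity $A_SA_{S'}^*=\sum_T a_{S,T}\,\breve a_{\rho(S',T)}^*$ is then formally identical to the holomorphic one, and expanding $\prod_{i=1}^m A_{S^{(i)}}A_{S^{(i+1)}}^*$ produces the product $a_{k_1,\varepsilon_1}\breve a_{k_2,\varepsilon_2}^*\cdots\breve a_{k_{2m},\varepsilon_{2m}}^*$ summed over the $(k,\varepsilon)$ whose row coordinates agree between consecutive factors and whose column coordinates agree inside each factor. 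Summing over all $S^{(i)}$, taking the trace, and using the trace property would give $\widetilde S(a,\sigma_l^{(d,m)},d,m)=Tr[(M_l^*M_l)^m]=\|M_l\|_{S_{2m}}^{2m}$; for $\widetilde\sigma_l^{(d,m)}$ the same manipulation keeps the label-$l$ coordinate $S_l\in\N\times\{1,*\}$ fixed along the unique large block, yielding $\sum_{S_l}Tr[(\sum_{S'}A_{(S',S_l)}^*A_{(S',S_l)})^m]$, to which Lemma~\ref{lemma=inequality_by_interpolation} applies to give the stated inequality.

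The one point that must be checked carefully---and the only one new relative to Lemma~\ref{lemma=identifications}---is that this matching of matrix entries corresponds \emph{exactly} to the relation $(k,\varepsilon)\respecte\sigma_l^{(d,m)}$, the alternation of $1$'s and $*$'s inside each block included. This is where the conjugation built into $\rho$ pays off: every block of $\sigma_l^{(d,m)}$, and every block but one of $\widetilde\sigma_l^{(d,m)}$, is a pair joining a position in an odd group to a position in an even group; at the odd position the coefficient enters as $a$ with its nominal exponent, while at the even position it enters as $\breve a^*$, whose exponent is conjugated through $\overline{\,\cdot\,}$. Equality of the underlying matrix entries therefore forces the two nominal exponents to be conjugate, i.e.\ to differ, which is precisely the alternation demanded by $\respecte$; for the single size-$2m$ block of $\widetilde\sigma_l^{(d,m)}$ the same alternation around the cycle forces the fixed value $S_l$, exponent included, to be consistent, so the reduction to one diagonal block survives. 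I expect this compatibility verification to be the sole real obstacle; with it in hand, the trace computations and the appeal to Lemma~\ref{lemma=inequality_by_interpolation} are word-for-word those of the holomorphic case.
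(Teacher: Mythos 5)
Your proposal is correct and follows exactly the route the paper intends: the paper states that the proofs of Lemma \ref{lemma=CauchySchwarz} and Lemma \ref{lemma=identifications} ``still apply with this notation'' and leaves the details to the reader, and your argument is precisely that adaptation, with the one genuinely new point---that the alternation of $1$'s and $*$'s required by $(k,\varepsilon)\respecte\pi$ on each pair (and on the single block of size $2m$ of $\widetilde\sigma_l^{(d,m)}$) corresponds exactly to the conjugation built into $\rho$ and $\breve a$---correctly identified and verified.
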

We leave the proofs to the reader.

\begin{proof}[Sketch of the proof of Theorem \ref{thm=non_holomo_Rdiag}]
  Use the same notation as above.  Take $m \in \N$. Then as for
  the self-adjoint case we expand the $2m$-norm as follows:
  \begin{multline*} \left\|\sum_{(k,\varepsilon) \in I_d} a_{k,\varepsilon}
      \otimes c_{k,\varepsilon} \right\|_{2m}^{2m} =
    \\\sum_{
      (k_1,\varepsilon_1), \dots ,(k_{2m},\varepsilon_{2m}) \in I_d }
    Tr(a_{k_1,\varepsilon_1} \breve a_{k_2,\varepsilon_2}^* \dots \breve
    a_{k_{2m},\varepsilon_{2m}}^*) \tau(c_{k_1,\varepsilon_1}
    c_{k_2,\varepsilon_2} \dots c_{k_{2m},\varepsilon_{2m}}).
\end{multline*}
By the freeness, the definition of $I_d$, Lemma
\ref{lemma=characterization_of_NCdm} and the fact that the $c_k$'s are
$\mathscr R$-diagonal, the expression of the moment
$\tau(c_{k_1,\varepsilon_1} \dots c_{k_{2m},\varepsilon_{2m}})$ becomes
simply
\[ \tau(c_{k_1,\varepsilon_1} \dots c_{k_{2m},\varepsilon_{2m}}) =
\sum_{\pi \in NC(d,m)} 1_{(k,\varepsilon)\respecte \pi}
\cum_\pi[c_{k_1(1)}^{\varepsilon_1(1)}, \dots
,c_{k_{2m}(d)}^{\varepsilon_{2m}(d)}].\] Where if $(k,\varepsilon)\respecte
\pi$ and $\alpha_n(c) = \cum_{2n}[c,c^*,c,c^*,\dots,c,c^*] =
\cum_{2n}[c^*,c,c^*,c,\dots,c^*,c] $ we have that
\[ \cum_\pi[c_{k_1(1)}^{\varepsilon_1(1)} \dots
c_{k_{2m}(d)}^{\varepsilon_{2m}(d)}] = \prod_{V\textrm{ block of }\pi}
\alpha_{|V|/2}(c).\] In particular this quantity (which we will abusively denote by
$\cum_\pi(c)$) does not depend on $(k,\varepsilon)$.
We therefore get 
\[ \left\|\sum_{k \in I} a_k \otimes c_k \right\|_{2m}^{2m} = \sum_{\pi \in
  NC(d,m)} \cum_\pi[c] \widetilde S(a,\pi,d,m).\] 

From this point the proof of Theorem \ref{thm=non_holomo_selfadj} applies except that we use Lemma
\ref{lemma=identificationsbis} and an iteration of Lemma
\ref{lemma=CauchySchwarzbis} instead of Lemma
\ref{lemma=identifications} and an iteration of Lemma
\ref{lemma=CauchySchwarz}.
\end{proof}

\subsection{Lower bounds} Here we get some lower bounds on the norms we
investigated before. For example the following minoration is classical:

\begin{lemma} \label{lemma=minoration_for_circular} Let $(c_k)_{k\in \N}$
  be circular $*$-free elements with $\|c\|_1=1$. Then for any finitely
  supported family of matrices $(a_{k_1,\dots,k_d})_{k_1,\dots,k_d \in \N}$
  the following inequality holds:
\[\|\sum_{k_1,\dots,k_d \in \N} a_{k_1,\dots ,k_d} \otimes c_{k_1} \dots
c_{k_d}\| \geq \max_{0\leq l \leq d} \|M_l\|.\]
\end{lemma}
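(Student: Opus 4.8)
The plan is to exhibit, for each $l$, a compression of the operator $A = \sum_k a_k\otimes c_{k_1}\cdots c_{k_d}$ by two isometries that recovers $M_l$ up to a unitary, giving $\|A\|\geq \|c\|_2^{\,d}\,\|M_l\|\geq\|M_l\|$. First I would pass to a concrete model: since $\|A\|$ depends only on the joint $*$-distribution of $(c_k)$ — that of a $*$-free circular family of variance $\|c\|_2^2$, which is tracial and uniquely determined — I may assume the $c_k$ act on the full Fock space $\mathcal F(K)$ of a Hilbert space $K$ with $c_k = \ell(f_k)+\ell(g_k)^*$, where $\{f_k,g_k\}_{k\in\N}$ is an orthogonal family with $\|f_k\|=\|g_k\|=\|c\|_2$ and $\Omega$ is the vacuum (cyclic trace) vector. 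Write $\hat f_k=f_k/\|c\|_2$ and $\hat g_k=g_k/\|c\|_2$, an orthonormal family.

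Next, fix $0\leq l\leq d$ and introduce the isometries $V_{d-l}\colon H\otimes\ell^2(\N)^{\otimes(d-l)}\to H\otimes\mathcal F(K)$ and $W_l\colon H\otimes\ell^2(\N)^{\otimes l}\to H\otimes\mathcal F(K)$ determined on basis vectors by $V_{d-l}(\xi\otimes e_{t_1}\otimes\cdots\otimes e_{t_{d-l}})=\xi\otimes(\hat g_{t_1}\otimes\cdots\otimes\hat g_{t_{d-l}})$ and $W_l(\xi\otimes e_{s_1}\otimes\cdots\otimes e_{s_l})=\xi\otimes(\hat f_{s_1}\otimes\cdots\otimes\hat f_{s_l})$; these are genuine isometries because the $\hat f$- and $\hat g$-tensors are orthonormal. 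The heart of the argument is then to compute $W_l^*\,A\,V_{d-l}$ and to show it equals $\|c\|_2^{\,d}\,M_l\,(1_H\otimes R)$, where $R$ is the unitary reversal $e_{t_1}\otimes\cdots\otimes e_{t_{d-l}}\mapsto e_{t_{d-l}}\otimes\cdots\otimes e_{t_1}$. Granting this, since $W_l,V_{d-l}$ are isometries and $R$ is unitary, $\|M_l\|=\|c\|_2^{-d}\|W_l^*AV_{d-l}\|\leq\|c\|_2^{-d}\|A\|$, and the hypothesis $\|c\|_1=1$ (whence $\|c\|_2\geq\|c\|_1=1$, so $\|c\|_2^{\,d}\geq1$) yields $\|A\|\geq\|c\|_2^{\,d}\|M_l\|\geq\|M_l\|$; taking the maximum over $l$ finishes the proof.

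To establish the identity for $W_l^*AV_{d-l}$ I would expand, for a fixed multi-index $t$, the vector $c_{k_1}\cdots c_{k_d}(\hat g_{t_1}\otimes\cdots\otimes\hat g_{t_{d-l}})$ by writing each $c_{k_i}$ as a creation $\ell(f_{k_i})$ plus an annihilation $\ell(g_{k_i})^*$ and distributing. The key observation — which I expect to be the only delicate point — is that after applying $W_l^*$ exactly one of the $2^d$ configurations survives: a term lands in the ``all-$\hat f$'' range of $W_l$ only if it uses exactly $l$ creations and $d-l$ annihilations, and, because an annihilation $\ell(g_{k})^*$ applied to a factor $f$ gives $\langle g_k,f\rangle=0$, no annihilation may act after a creation; hence all $d-l$ annihilations must be the rightmost factors $c_{k_{l+1}},\dots,c_{k_d}$ and all $l$ creations the leftmost $c_{k_1},\dots,c_{k_l}$. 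In that surviving configuration the annihilations peel off the $\hat g$'s one by one, forcing $(k_{l+1},\dots,k_d)=(t_{d-l},\dots,t_1)$ and contributing $\|c\|_2^{\,d-l}$ through $\langle g_{k},\hat g_{t}\rangle=\|c\|_2\,\delta_{k,t}$, while the creations build $f_{k_1}\otimes\cdots\otimes f_{k_l}=\|c\|_2^{\,l}\,\hat f_{k_1}\otimes\cdots\otimes\hat f_{k_l}$. Collecting the resulting factor $\|c\|_2^{\,d}$ and summing over the free indices $s=(k_1,\dots,k_l)$ gives exactly $\|c\|_2^{\,d}\sum_{s}(a_{[s,\bar t]}\,\xi)\otimes e_s=\|c\|_2^{\,d}\,M_l\,(1_H\otimes R)(\xi\otimes e_t)$, as claimed.
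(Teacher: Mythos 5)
Your proof is correct and is essentially the paper's own argument: the paper also realizes the $*$-free circular family on a full Fock space as creation-plus-annihilation operators $c_k = s_k + \widetilde{s}_k^{\,*}$ and compresses $A$ between the length-$(d-l)$ ``annihilation-side'' tensors and the length-$l$ ``creation-side'' tensors to recover $M_l$. Your write-up is in fact somewhat more careful than the paper's: you get the orientation of the two subspaces right (the paper's statement, read literally, restricts to $H_1^{\otimes (d-l)}$ and projects onto $H_2^{\otimes l}$, which would give $0$ since annihilation by $f_k$ kills $e$-tensors), and you make explicit the reversal unitary $R$ and the scaling factor $\|c\|_2^{\,d}\geq \|c\|_1^{\,d}=1$ that the paper leaves implicit.
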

\begin{proof}We use the following (classical) realization of free circular
  elements on a Fock space. Let $H=H_1 \oplus_2 H_2$ be a Hilbert space
  with an orthonormal basis given by $(e_k)_{k \in \N} \cup (f_k)_{k \in
    \N}$ ($(e_k)$ is a basis of $H_1$ and $(f_k)$ of $H_2$). Let $\mathcal
  F(H) = \C \Omega \oplus \oplus_{n\geq 1} H^{\otimes n}$ be the full Fock
  space constructed on $H$ and for $k \in \N$ $s(k)$ (resp. $\widetilde
  s(k)$) the operator of creation by $e_k$ (resp. $f_k$). Define finally
  $c_k=s_k + \widetilde s_k^*$. It is well-known that $(c_k)_{k \in \N}$
  form of $*$-free family of circular variables for the state $\langle\cdot
  \Omega,\Omega\rangle$ which is tracial on the $C^*$-algebra generated by
  the $c_k$'s.

  Let $K$ be the Hilbert space on which the $a_k$'s act ($K=\C^\alpha$ if
  $a_k \in M_\alpha(\C))$. Then if $P_k$ denotes the orthogonal projection
  from $\mathcal F(H) \to H_2^{\otimes k}$, for $0 \leq l \leq d$ the
  operator $(\id \otimes P_l) \circ \sum_{k_1,\dots,k_d \in \N}
  a_{k_1,\dots ,k_d} \otimes c_{k_1} \dots c_{k_d}{\left|_{K\otimes
        H_1^{\otimes d-l}}\right.}$ corresponds to $M_l$ if it is viewed as
  an operator from $K\otimes H_1^{\otimes d-l} \simeq K\otimes
  \ell^2(\N)^{\otimes d-l}$ to $K \otimes H_2^{\otimes l}\simeq K\otimes
  \ell^2(\N)^{\otimes l}$ for the identification $H_1 \simeq \ell^2$ and
  $H_2\simeq\ell^2$ with the orthonormal bases $(e_k)$ and $(f_k)$.

This proves the Lemma.
\end{proof}

  We also prove the following Lemma which was stated in the introduction.
\begin{lemma}
\label{lemma=norm_of_matrices_for_p_prime}
Let $p$ be a prime number and define $a_{k_1,\dots,k_d} = \exp(2i \pi k_1
\dots k_d/p)$ for any $k_i \in \{1,\dots ,p\}$.

Then $\|(a_k)\|_2=p^{d/2}$ and for any $1 \leq l \leq d-1$ the matrix $M_l$
defined by $M_l = \left(a_{(k_1,\dots,k_l),(k_{l+1},\dots,k_d)}\right) \in
M_{p^l,p^{d-l}}(\C)$ satisfies $\|M_l\| \leq p^{d/2}\sqrt{(d-1)/ p}$.
\end{lemma}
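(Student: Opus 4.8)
The $\ell^2$ statement is immediate: every $a_k$ has modulus $1$ and the index $k$ ranges over $\{1,\dots,p\}^d$, so $\|(a_k)\|_2^2 = \sum_k |a_k|^2 = p^d$, whence $\|(a_k)\|_2 = p^{d/2}$.

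For the operator norm the plan is to compute $\|M_l\|^2 = \|M_l M_l^*\|$ explicitly. Writing $t=d-l$, the key structural observation is that the entry $a_{(k_1,\dots,k_d)}=\exp(2i\pi k_1\cdots k_d/p)$ depends only on the residues mod $p$ of the row-product $r=k_1\cdots k_l$ and the column-product $s=k_{l+1}\cdots k_d$, via $\exp(2i\pi rs/p)$. Hence for two row multi-indices $\mathbf{k},\mathbf{k}'$ with products $r,r'$,
\[ (M_l M_l^*)_{\mathbf{k},\mathbf{k}'} = \sum_{c\bmod p} N(c)\,\exp\!\big(2i\pi (r-r')c/p\big), \]
where $N(c)$ counts the column-tuples in $\{1,\dots,p\}^t$ whose product is $\equiv c\pmod p$. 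Since $p$ is prime, $\prod_i k_i\equiv 0$ iff some $k_i=p$, giving $N(0)=p^t-(p-1)^t$, while the cyclic symmetry of $(\Z/p)^*$ forces $N(c)=(p-1)^{t-1}$ for every $c\neq 0$. Using $\sum_{c=0}^{p-1}\exp(2i\pi\delta c/p)=p\cdot 1_{\delta\equiv 0}$, the entry collapses to $p^t$ when $r\equiv r'\pmod p$ and to $w:=p^t-(p-1)^t-(p-1)^{t-1}$ otherwise. In matrix form this reads
\[ M_l M_l^* = w\,J + p(p-1)^{t-1}\,G, \]
where $J$ is the all-ones $p^l\times p^l$ matrix and $G$ is the indicator matrix of ``equal row-product mod $p$''.

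Both $J$ and $G$ become direct sums of all-ones blocks after reindexing, so $\|J\|=p^l$ and $\|G\|$ equals the largest block size of $G$. The same count gives block sizes $p^l-(p-1)^l$ and $(p-1)^{l-1}$, the former being larger for $l\ge 1$, so $\|G\|=p^l-(p-1)^l$. One checks $w\ge 0$ (equivalently $p^t-(p-1)^t\ge (p-1)^{t-1}$, valid for $t\ge 1$), so the triangle inequality gives $\|M_l\|^2\le w\,p^l + p(p-1)^{t-1}(p^l-(p-1)^l)$. Expanding, the three terms carrying the factor $(p-1)^{t-1}p^l$ cancel (their combined coefficient is $-(p-1)-1+p=0$), and the bound collapses to $p^d-p(p-1)^{d-1}=p\big(p^{d-1}-(p-1)^{d-1}\big)$, which is notably independent of $l$. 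The final step is the elementary estimate $p^{d-1}-(p-1)^{d-1}\le (d-1)p^{d-2}$ (integrate $x\mapsto (d-1)x^{d-2}$ over $[p-1,p]$), yielding $\|M_l\|^2\le (d-1)p^{d-1}$, i.e.\ the claimed $\|M_l\|\le p^{d/2}\sqrt{(d-1)/p}$.

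The only mildly delicate point, and the step I would treat most carefully, is the equidistribution count $N(c)=(p-1)^{t-1}$ for $c\neq 0$ together with its analogue for the blocks of $G$: this is exactly where primality of $p$ is used, through the fact that $\{1,\dots,p-1\}$ forms a group under multiplication mod $p$, so that multiplying one coordinate by a fixed element is a bijection on tuples that redistributes the products uniformly over the nonzero residues. Everything after that is bookkeeping and the two convexity-type inequalities $p^n-(p-1)^n\ge (p-1)^{n-1}$ and $p^n-(p-1)^n\le n\,p^{n-1}$.
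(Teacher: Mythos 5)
Your proof is correct and takes essentially the same route as the paper: both compute $M_l M_l^*$ entrywise, arrive at the identical decomposition $\bigl(p^{d-l}-p(p-1)^{d-l-1}\bigr)J + p(p-1)^{d-l-1}G$ (all-ones matrix plus the equal-product-mod-$p$ indicator, which is block-diagonal with all-ones blocks), apply the triangle inequality with $\|J\|=p^l$ and $\|G\|=p^l-(p-1)^l$, and finish with $p^d-p(p-1)^{d-1}\leq (d-1)p^{d-1}$. The only cosmetic difference is in evaluating the entries: you count the residue classes $N(c)$ by equidistribution over $(\Z/p\Z)^*$ and then use the character sum, whereas the paper sums the geometric series over the last index $k_d$ first and counts tuples with product $\equiv 0 \bmod p$ — the same computation organized differently.
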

\begin{proof}
  Since $\|M_l\|^2 = \|M_lM_l^*\|$ we compute the matrix $M_l M_l^* \in
  M_{p^{l},p^{l}}(\C)$.

For any $s=(s_1,\dots s_l)$ and $t=(t_1,\dots,t_l)\in \{1,\dots,p\}^l$ the
$s,t$-th entry of $M_l M_l^*$ is equal to 
\[ \sum_{(k_{l+1},\dots ,k_d) \in \{1,\dots,p\}^{d-l}} \exp\left(2i \pi
  (s_1\dots s_l- t_1\dots t_l) k_{l+1}\dots k_d/p\right).\] If $s_1\dots
s_l= t_1\dots t_l \mod p$ then this quantity is equal to $p^{d-l}$ whereas
otherwise, $\omega=\exp\left(2i \pi (s_1\dots s_l- t_1\dots t_l) /p\right)$
is a primitive $p$-th root of $1$, and it is straightforward to check that
for such an $\omega$,
\begin{eqnarray*} \sum_{(k_{l+1},\dots ,k_d) \in \{1,\dots,p\}^{d-l}}
  \omega^{ k_{l+1}\dots k_d} &=& \sum_{k_{l+1},\dots ,k_{d-1}}
  \sum_{k_d=1}^p \left(\omega^{
      k_{l+1}\dots k_{d-1}}\right)^{k_d}\\
  & = & \sum_{k_{l+1},\dots ,k_{d-1}} p 1_{k_{l+1}\dots k_{d-1}=0 \mod
    p}\\
  & = & p(p^{d-l-1}-(p-1)^{d-l-1}) .
\end{eqnarray*}
We therefore have that
\[ M_l M_l^* = (p^{d-l}-p(p-1)^{d-l-1}) \Big(1\Big)_{s,t \in [p]^l} +
p(p-1)^{d-l-1} \Big(1_{s_1\dots s_l=t_1\dots t_l}\Big)_{s,t \in [p]^l}.\]
The norm of an $N \times N$ matrix with entries all equal to $1$ is $N$.

Moreover if $[p]^l=\{(s_1,\dots ,s_l)\}$ is decomposed depending on the
value of $s_1\dots s_l$ modulo $p$, the matrix $\Big(1_{s_1\dots
  s_l=t_1\dots t_l}\Big)_{s,t \in [p]^l}$ is a block-diagonal matrix with
blocks having all entries equal to $1$. Its norm is therefore equal to
\begin{multline*}\max_{i\in [p]} \left|\left\{(s_1,\dots,s_l) \in [p]^l,
      s_1 \dots s_l=i \mod p\right\}\right| \\ =
      \left|\left\{(s_1,\dots,s_l) \in [p]^l, s_1 \dots s_l=0
      \right\}\right| = p^l - (p-1)^l.
\end{multline*}
By the triangle inequality the norm of $M_l M_l^*$ is thus less than
\begin{multline*} p^{l+1}(p^{d-l-1}-(p-1)^{d-l-1}) + p(p-1)^{d-l-1} (p^l -
  (p-1)^l) \\ = p^d -p (p-1)^{d-1} \leq (d-1) p^{d-1}
\end{multline*}
\end{proof}
\subsection*{Acknowledgment} I would like to thank Q. Xu for bringing
the problem to my attention and G. Pisier for many suggestions and
comments during discussions or after his careful reading of the many
preliminary versions of this paper. I would like also to thank the
referee for his useful suggestions regarding the exposition.

\bibliographystyle{alpha}
\bibliography{biblio}

\end{document}